\documentclass[a4,11pt]{paper}
\usepackage{enumitem,color,amssymb,bm,amsmath,stmaryrd,tikz-cd,mathtools}
\usepackage[english]{babel}  
\usepackage{graphicx}
\usepackage{lipsum}
\usepackage{xcolor}
\usepackage{booktabs}
\usepackage{scalerel}
\usepackage{ntheorem}
{\theoremstyle{empty}
}
\usepackage[top=1in, left=1in, right=1in, bottom=1in]{geometry}
\newcommand{\R}{\mathbb{R}}
\newcommand{\Z}{\mathbb{Z}}

\bibliographystyle{alpha}
\synctex=1
\subsectionfont{\large\sf\bfseries\color{black!50!blue}} 
\sectionfont{\Large\sf\bfseries\color{black!50!blue}} 
\titlefont{\LARGE\sf\bfseries\color{black!50!blue}} 
\numberwithin{equation}{section}
\newcommand{\inclj}{\mathsf{j}}
\newcommand{\llb}{\llbracket}
\newcommand{\rrb}{\rrbracket}
\newcommand{\Ringm}{{\Fbb[\wvar,\zvar]}}
\newcommand{\Ringh}{{\Fbb[\zvar]}}
\newcommand{\Ringhh}{\Fbb}
\newcommand{\Rings}{\Abb^\star}
\newcommand{\CFL}{\mathsf{CFL}}
\newcommand{\CFLm}{\CFL^-}
\newcommand{\CFLh}{\CFL^\circ}
\newcommand{\CFLhh}{\widehat{\CFL}}
\newcommand{\CFLs}{\CFL^\star}
\newcommand{\HFL}{\mathsf{HFL}}
\newcommand{\HFLm}{\HFL^-}
\newcommand{\HFLh}{\HFL^\circ}
\newcommand{\HFLhh}{\widehat{\HFL}}
\newcommand{\HFLs}{\HFL^\star}

\newcommand{\HFsTorsion}{\HFKs_{tor}}
\newcommand{\HFsTF}{\HFKs_{t.f.}}

\newcommand{\HFhTF}{\HFKh_{t.f.}}
\newcommand{\HFmTorsion}{\HFKm_{tor}}
\newcommand{\HFmTF}{\HFKm_{t.f.}}

\newcommand{\wedgewhh}{\widehat{\mywedge}_\wpoint}

\newcommand{\wedgezhh}{\widehat{\mywedge}_\zpoint}
\newcommand{\wedgews}{\mywedge^\star_\wpoint}
\newcommand{\wedgezs}{\mywedge^\star_\zpoint}
\newcommand{\HFKm}{\mathsf{HF}^-}
\newcommand{\HFKmw}{\mathsf{HF}^-_w}
\newcommand{\HFKh}{\mathsf{HF}^\circ}
\newcommand{\HFKhw}{\mathsf{HF}^\circ_w}
\newcommand{\HFKhh}{\widehat{\mathsf{HF}}}
\newcommand{\HFKs}{\mathsf{HF}^\star}
\newcommand{\HFKsw}{\mathsf{HF}^\star_w}

\newcommand{\tb}{\mathbf{t}}
\newcommand{\zpt}{z}
\newcommand{\wpt}{w}
\newcommand{\grw}{\mathrm{gr}_\wpoint}
\newcommand{\grz}{\mathrm{gr}_\zpoint}

\newcommand{\HFHAhat}{\Ringh\text{-}\mathsf{Modules}}

\newcommand{\HFHsn}{\mywedge^\star_n\text{-}\mathsf{Modules}}
\newcommand{\HFHhn}{\mywedge^\circ_n\text{-}\mathsf{Modules}}
\newcommand{\HFHstar}{\Abb^\star\text{-}\mathsf{Modules}}
\newcommand{\forget}{\mathsf{f}}
\newcommand{\incl}{\mathsf{i}}

\newcommand{\Path}{\mathsf{A}}
\newcommand{\Links}{\mathsf{Links}}
\newcommand{\LinksC}{{\mathsf{Links}}^{c}}

\newcommand{\nPLinks}{n\text{-}\mathsf{PointedLinks}}
\newcommand{\nPLinksC}{n\text{-}{\mathsf{PointedLinks}}^{c}}

\newcommand{\oZ}{{\overline{\zpoint}}}
\newcommand{\oW}{{\overline{\wpoint}}}
\newcommand{\zpoint}{{\mathsf{z}}}
\newcommand{\wpoint}{{\mathsf{w}}}
\newcommand{\Cell}{{\overline{\mathcal{C}}}}
\newcommand{\Deco}{\mathcal{C}}

\newcommand{\Tria}{\overline{\mathcal{T}}}
\newcommand{\Divide}{\mathsf{A}}
\newcommand{\qed}{\hfill \ensuremath{\Box}}
\newcommand{\SCob}{\mathcal{S}}
\newcommand{\HAction}{\mathsf{A}}

\newcommand{\Triangle}{\triangle}

\newcommand{\HSurf}{\Sigma}
\newcommand{\Surface}{{S}}
\newcommand{\bSurface}{{\overline{\Surface}}}
\newcommand{\asf}{{\mathsf{a}_{\zpoint,\wpoint}}}

\newcommand\mywedge{\scalerel*{\bigwedge}{j}}

\newcommand{\Triangles}{\mathsf{Tri}}

\newcommand{\ppoint}{\mathsf{p}}

\newcommand{\wvar}{\mathsf{u}}
\newcommand{\zvar}{\mathsf{v}}

\newcommand{\afrak}{\mathfrak{a}}
\newcommand{\SpinC}{{\mathrm{Spin}^c}}





\newcommand{\oalphas}{{\overline{\alphas}}}

\newcommand{\oS}{\overline{S}}

\newcommand{\oH}{\overline{H}}




\newcommand{\Link}{\mathcal{L}}
\newcommand{\TSurface}{\mathcal{S}}


\newcommand{\ra}{\rightarrow}





\newcommand{\fmap}{\mathfrak{f}}
\newcommand{\gmap}{\mathfrak{g}}

\newcommand{\tmap}{\mathfrak{t}}


\newcommand{\colvec}[2][.8]{%
  \scalebox{#1}{%
    \renewcommand{\arraystretch}{.8}%
    $\begin{pmatrix}#2\end{pmatrix}$%
  }
}

\newcommand{\Sbf}{\mathbf{S}}

\newcommand{\cbf}{\mathbf{c}}


\newcommand{\Abb}{\mathbb{A}}

\newcommand{\Ahat}{{\widehat{\Abb}}}

\newcommand{\Fbb}{\mathbb{F}}


\newcommand{\Dcal}{\mathcal{D}}

\newcommand{\Mcal}{\mathcal{M}}

\newcommand\PD{\mathrm{PD}}

\newcommand{\Ker}{\mathrm{Ker}}

\newcommand{\Image}{\mathrm{Im}}

\newcommand\Sym{\mathrm{Sym}}


\newtheorem{thm}{\bfseries\color{black!50!blue} Theorem}[section]
\newtheorem{prop}[thm]{\bfseries\color{black!50!blue} Proposition}

\newtheorem{lem}[thm]{\bfseries\color{black!50!blue} Lemma}

\newtheorem{rmk}[thm]{\bfseries\color{black!50!blue} Remark}

\newtheorem{defn}[thm]{\bfseries\color{black!50!blue} Definition}
\newtheorem{ex}[thm]{\bfseries\color{black!50!blue} Example}

\def\endproof{\relax\ifmmode\expandafter\endproofmath\else
  \unskip\nobreak\hfil\penalty50\hskip.75em\hbox{}\nobreak\hfil\bull
  {\parfillskip=0pt \finalhyphendemerits=0 \bigbreak}\fi}
\def\endproofmath$${\eqno\bull$$\bigbreak}
\def\bull{\vbox{\hrule\hbox{\vrule\kern3pt\vbox{\kern6pt}\kern3pt
\vrule}\hrule}}





\newcommand\x{\mathbf x}
\newcommand\y{\mathbf y}

\newcommand\z{\mathbf z}

\newcommand{\spinc}{\mathfrak s}
\newcommand{\spinct}{\mathfrak t}

\newcommand\alphas{\mbox{\boldmath$\alpha$}}


\tikzset{commutative diagrams/.cd,
	mysymbol/.style={start anchor=center,end anchor=center,draw=none}
}

\begin{document}
\title{A Heegaard-Floer TQFT for link cobordisms}%
\author{Eaman Eftekhary}%
\institution{\sf{School of Mathematics, Institute for Research in Fundamental Sciences (IPM), Tehran, Iran}}
\maketitle
\begin{abstract}
We introduce a Heegaard-Floer homology functor from the category of oriented links in closed $3$-manifolds and oriented surface cobordisms in $4$-manifolds connecting them 
to the category of  $\Fbb[\zvar]$-modules and   $\Fbb[\zvar]$-homomorphisms  between them, where $\Fbb$ is the field with two elements. In comparison with previously defined TQFTs for decorated links and link cobordisms, the construction of this paper has the advantage of being independent from the decoration. Some of the basic properties of this functor are also explored.  
\end{abstract}
\tableofcontents
\section{Introduction}
Ozsv\'ath and Szab\'o constructed powerful invariants of closed $3$-manifolds  in \cite{OS-3m1}. Given a pointed closed $3$-manifold $Y$, these invariants are constructed as the isomorphism classes of four  graded $\Z[\wvar]$-modules which are decomposed according to $\SpinC$ structures  on $Y$. We focus on the theory with coefficients  in the field $\Fbb=\Z/2$ with two elements, where the {\emph{naturality}} of the construction may be addressed.  The invariants of Ozsv\'ath and Szab\'o, which are called the Heegaard-Floer groups of $Y$ are then  given as four $\Fbb[\wvar]$-modules, denoted by 
\begin{align*} 
\HFKs(Y)=\bigoplus_{\spinc\in\SpinC(Y)}\HFKs(Y,\spinc),\quad\text{for}\ \star\in\{-,+,\infty,\wedge\}.
\end{align*}
 Associated with a $4$-dimensional cobordism $X$ from a pointed closed $3$-manifold $Y$ to another pointed closed $3$-manifold $Y'$, which is decorated by an arc connecting the marked point of $Y$ to the marked point of $Y'$,
the $\Fbb[\wvar]$-homomorphisms 
\begin{align*} 
\HFKs(X,\spinct)=\fmap_{X,\spinct}^{\star}: \HFKs(Y,\spinct|_{Y})\ra  \HFKs(Y',\spinct|_{Y'}),\quad\spinct\in\SpinC(X)
\end{align*}
are also constructed. Juh\'asz in \cite{Juhasz-naturality} (and Juh\'asz, Thurston and Zemke in \cite{JTZ}) proved that  $ \HFKs(Y)$ may be constructed as a concrete $\Fbb[\wvar]$-module associated to the pointed closed $3$-manifold $Y$ (rather than the isomorphism class of such a module). For the hat theory, the sum over all $\SpinC$ structures is well-defined and gives the homomorphism 
\begin{align*}
\HFKhh(X)=\widehat{\fmap}_{X}: \HFKhh(Y)\ra  \HFKhh(Y').
\end{align*}
Therefore, $ \HFKhh$ gives a functor from the category of pointed closed $3$-manifolds  and decorated cobordisms between them to the category of  
$\Fbb$-modules and homomorphisms between them.\\

The theory was subsequently developed to construct invariants for pointed knots \cite{OS-knot, Ras, Ef-LFH},  pointed links \cite{OS-link} and  balanced sutured manifolds \cite{Juh,AE-1}.   The approach of   \cite{Juhasz-naturality} (also \cite{JTZ}) implies  that the outcomes for pointed links and sutured manifolds are  functors between appropriate categories, as explored in \cite{Juhasz-naturality,JTZ,AE-2,Zemke-1}. We would like to reconsider the case of oriented links  and the cobordisms between them:

\begin{defn}\label{defn:links}
A {\emph{link}} $(Y,L)$ consists of a closed $3$-manifold $Y$ and  an oriented closed $1$-submanifold $L$ of $Y$ intersecting every connected component of $Y$.
A {\emph{link cobordism}} 
\begin{align*}
(X,\Surface):(Y,L)\ra (Y',L')
\end{align*}
 from  the link $(Y,L)$ to the link $(Y',L')$ consists of a $4$-manifold $X$ with boundary $-Y\amalg Y'$ and a properly embedded compact and oriented surface $\Surface$   in $X$ with boundary $-L\amalg L'$, so that the intersection of every connected component of $X$ (respectively, $\Surface$) with either of $Y$ and $Y'$ (respectively, $L$ and $L'$) is non-empty. A link cobordism $(X,\Surface)$ is called a {\emph{link concordance}} if every connected component of $\Surface$ is homeomorphic to a cylinder. The category of links and link cobordisms  (respectively, link concordances) between them is denoted by $\Links$ (respectively, by $\LinksC$). 
\end{defn}	 

The hat theory version of our main result in this paper may be stated as the following theorem:

\begin{thm}\label{thm:main-intro}
There are  Heegaard-Floer functors, denoted by 
\begin{align*}
\HFKh:\LinksC\ra \HFHAhat\quad\text{and}\quad\HFKhw:\Links\ra \HFHAhat,
\end{align*}		
which assigns the bi-graded $\Ringh$-modules $\HFKh(Y,L)$ and  $\HFKhw(Y,L)\subset \HFKh(Y,L)$ to every link $(Y,L)$ (respectively). These modules decompose according to $\SpinC$ structures on $Y$. Associated with every  link cobordism $(X,\Surface):(Y,L)\ra (Y',L')$, $\HFKhw$ assigns a  $\Ringh$-homomorphism 
\begin{align*}
\HFKhw(X,\Surface)=\gmap_{X,\Surface}^\circ:\HFKhw(Y,L)\ra \HFKhw(Y',L').	
\end{align*}	 
If $(X,\Surface)$ is a concordance,
$\HFKh$ assigns a  $\Ringh$-homomorphism 
\begin{align*}
	\HFKh(X,\Surface):\HFKh(Y,L)\ra \HFKh(Y',L'),	
\end{align*}	 
to it which restricts to $\HFKhw(X,\Surface)$ on the submodule $\HFKhw(Y,K)$  of $\HFKs(Y,K)$, and may thus be denoted by $\gmap_{X,\Surface}^\circ$ without confusion. 
\end{thm}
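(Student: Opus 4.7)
The plan is to bootstrap from Zemke's TQFT for decorated link cobordisms, in the spirit of the Juh\'asz--Thurston--Zemke strategy that passes from decorated to undecorated closed pointed three-manifolds. First, for each link $(Y,L)$ I would choose an auxiliary decoration $\Deco$ (a pair of basepoints $\wpoint,\zpoint$ on each component of $L$ and the attendant Heegaard data) and work with the decorated $\Fbb[\zvar]$-module $\HFKh(Y,L;\Deco)$, whose bigrading and $\SpinC$ decomposition are inherited from the standard link Floer package.

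Next I would establish a naturality statement: for any two decorations $\Deco_1$ and $\Deco_2$ of $(Y,L)$ there is a canonical isomorphism $\Phi_{\Deco_1,\Deco_2}\colon\HFKh(Y,L;\Deco_1)\xra{\sim}\HFKh(Y,L;\Deco_2)$ of $\Fbb[\zvar]$-modules, satisfying the cocycle identity $\Phi_{\Deco_2,\Deco_3}\circ\Phi_{\Deco_1,\Deco_2}=\Phi_{\Deco_1,\Deco_3}$. These transition maps are assembled from Zemke's basepoint-moving maps together with Juh\'asz--Thurston--Zemke invariance under changes of Heegaard data. The canonical colimit of this system defines the undecorated module $\HFKh(Y,L)$. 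The submodule $\HFKhw(Y,L)$ is specified intrinsically, e.g.\ as the common annihilator of the nilpotent endomorphisms that represent the $\wvar$-actions on the decorated modules, and one verifies that every $\Phi_{\Deco_1,\Deco_2}$ preserves this submodule, so that the inclusion $\HFKhw(Y,L)\subset\HFKh(Y,L)$ is well defined.

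For a cobordism $(X,\Surface)\colon(Y,L)\ra(Y',L')$, once a decoration of $\Surface$ compatible with chosen boundary decorations is fixed, Zemke's TQFT supplies an $\Fbb[\zvar]$-homomorphism between the decorated modules. The heart of the argument is to prove that its restriction to $\HFKhw$ is independent of the decoration on $\Surface$: any two surface decorations are related by elementary moves (isotopies of dividing arcs, arc slides, stabilizations) under which Zemke's map changes by a correction factoring through a $\wvar$-action, and such corrections vanish on $\HFKhw$ by construction. Combined with the naturality isomorphisms $\Phi$ and Zemke's composition law in the decorated category, this yields a well-defined functor $\HFKhw\colon\Links\ra\HFHAhat$. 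When $\Surface$ is a disjoint union of cylinders, its components transport the basepoints of $L$ canonically to basepoints of $L'$, so a canonical isotopy class of surface decorations is available and the decoration ambiguity disappears on the full module; this produces the functor $\HFKh\colon\LinksC\ra\HFHAhat$, whose restriction to $\HFKhw$ recovers $\gmap^\circ_{X,\Surface}$ as required.

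The principal obstacle is the decoration independence on $\HFKhw$ for general cobordisms. One must show that changing a dividing arc on $\Surface$ alters Zemke's induced map only by a correction lying in the image of some $\wvar$-action, and then verify that $\HFKhw$ is characterized intrinsically so that all such corrections vanish upon restriction. Once this compatibility between the intrinsic definition of $\HFKhw$ and the fundamental moves on surface decorations is in place, the rest is a formal consequence of Zemke's decorated TQFT and the naturality established above.
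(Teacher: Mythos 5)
Your outline captures the correct high-level strategy (bootstrap from the decorated TQFT and prove decoration independence), but it contains three genuine gaps that would block the argument.

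First, the intrinsic characterization of $\HFKhw$ is wrong. You propose defining it as the common annihilator of the basepoint operators coming from $\wpoint$, but that annihilator is the paper's $\HFKh(Y,L)$ (Definition~\ref{defn:weak-HF}), not the smaller submodule. The weak group carries the additional divisibility constraint
$\HFKhw(Y,L)=\HFKh(Y,L)\cap\bigl(\zvar\cdot\HFLh(Y,\Link)\bigr)$,
and that $\zvar$-divisibility is exactly what makes the cobordism maps decoration-independent; your candidate submodule is too large for the invariance to hold.

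Second, the mechanism you invoke for decoration independence — that changing a dividing arc alters the map ``by a correction factoring through a $\wvar$-action'' which then vanishes on $\HFKhw$ — is not what happens, and would not prove the claim. In the $\circ$-theory one has already set $\wvar=0$, so such a correction would vanish identically and the full $\HFKh$ would be invariant under all decoration changes, which is false. The paper's Theorem~\ref{thm:perturbation-invariance-general} proves something more delicate: for two decorations $\SCob'$ and $\SCob''$ differing by a general perturbation, a holomorphic-quadrilateral argument together with the point-shift cobordism formula of Proposition~\ref{prop:point-shift} yields $\zvar\,\fmap_2=\fmap_1\circ\fmap_3$; when $\x=\zvar\y$ lies in $\HFKhw$, one can pull the $\zvar$ across and conclude $\gmap_{X,\SCob'}(\x)=\gmap_{X,\SCob''}(\x)$. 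Your proposal does not supply a substitute for this computation. Moreover, reducing to a manageable set of moves between decorations is itself nontrivial: the paper needs the combinatorial Theorem~\ref{thm:moving-between-decorations} (via Lemmas~\ref{lem:changing-deperturbed-cell-decomps} and~\ref{lem:rectangulations}) to know that perturbations, deperturbations, and boundary twists suffice.

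Third, for concordances you assert that the cylinders transport basepoints canonically and hence a canonical isotopy class of surface decoration is available, eliminating the ambiguity. That is false: even on a single cylinder, decorations with different numbers of parallel arcs, or differing by boundary twists, are not isotopic. The correct reason (Theorem~\ref{thm:invariance-concordance}) is that on a union of cylinders every cell is a bigon, so all perturbations are simple, and simple perturbations, deperturbations, and twist cobordisms (Theorems~\ref{thm:perturbation-formula} and~\ref{thm:twist-invariance}) all leave the induced map on the full $\HFKh$ unchanged — whereas general perturbations, which only preserve the map on $\HFKhw$, simply do not arise for cylinders. Your argument for the concordance case therefore needs to be replaced, though the conclusion is the same.
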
	

 Pointed links and decorated cobordisms between them are already considered by Alishahi and the author \cite{AE-1,AE-2} and independently by Zemke \cite{Zemke-1}.  Theorem~\ref{thm:main-intro} gives a weaker version of Heegaard Floer  groups and cobordism maps, which is instead {\emph{not sensitive}} to the decoration. To describe the difference more precisely, we need to review/set up some notation.\\ 
 
  We remind the reader that a  {\emph{pointed link}} $(Y,L,\ppoint)$ is an oriented link $(Y,L)$ together with a  finite collection $\ppoint$ of points on $L$ intersecting every connected component of $L$. A {\emph{decorated link cobordism}} from  a pointed link $(Y,L,\ppoint)$ to the pointed link $(Y',L',\ppoint')$ is a triple $(X,\Surface,\Path)$, where $(X,\Surface)$ is a link cobordism and $\Path$ is a properly embedded $1$-manifold without closed components on $\Surface$ connecting $\ppoint$ to $\ppoint'$. In particular, the number of markings in either of $\ppoint$ and $\ppoint'$ is the same positive integer $n$. We may thus assume that $\ppoint$ and $\ppoint'$ are maps from a fixed set $[n]=\{1,\ldots,n\}$ to the links $L$ and $L'$, respectively.  We then write 
  \begin{align*}
  (X,\Surface,\Path):(Y,L,\ppoint)\ra (Y',L',\ppoint').
  \end{align*}
Having fixed $n$, the category of $n$-pointed links and decorated link cobordisms between them is denoted by  $\nPLinks$.\\

The notions of decorated links and cobordisms between them which is used here is  less general in comparison with the notion used in \cite{Zemke-1}. Each point in $\ppoint$ determines a pair of basepoints using the orientation of $L$, one in $\wpoint$ and the other one in $\zpoint$. The decoration gives a decomposition of the surface to a union of strips  $\Surface_\zpoint$ and a subsurface $\Surface_\wpoint$ which is isotopic to $\Surface-\Path$. These two subsurfaces are separated by a dividing set, which may be identified as the union of two parallel copies of $\Path$. This gives a decorated link cobordism  in the sense of \cite{Zemke-1}. The aforementioned loss of generality (in comparison with the setup in \cite{Zemke-1}) is chosen in favor of a more smooth exposition, as the goal is to get rid of the decoration anyway.   \\

Let  $\Abb^-=\Ringm $ denote the polynomial ring with two variables  over $\Fbb$, define 
\begin{align*}
\Abb^\circ=\Ringm/\zvar=\Fbb[\wvar]\quad\text{and}\quad\Ahat=\Ringm/\langle\wvar,\zvar\rangle=\Fbb
\end{align*}
 and equip them with the structure of  $\Ringm$-module as quotients of $\Ringm$.   For $\star=-,\circ,\wedge$, let $\HFHstar$ denote the category of 
$\Abb^\star$-modules and 
$\Abb^\star$-homomorphisms between them.  
Let $\mywedge^\star_n$ denote the exterior algebra over $\Abb^\star$ generated by $[n]$. Denote the category of  $\mywedge^\star_n$-modules and 
$\mywedge^\star_n$-homomorphisms between them  by $\HFHsn$.  \\

Associated with an $n$-pointed link $(Y,L,\ppoint)$ and a $\SpinC$ structure $\spinc\in\SpinC(Y)$,
the constructions of \cite{AE-2} and \cite{Zemke-1} give the concrete 
$\mywedge^\star_n$-modules $\HFLs(Y,L,\ppoint,\spinc)$. We then set 
\begin{align*}
\HFLs(Y,L,\ppoint)=\bigoplus_{\spinc\in\SpinC(Y)}\HFLs(Y,L,\ppoint,\spinc),\quad\text{for}\ \ \star\in\{-,\circ,\wedge\}.
\end{align*}
Moreover, associated with a decorated link cobordism $	(X,\Surface,\Path)$ from $(Y,L,\ppoint)$ to $(Y',L',\ppoint')$
and $\spinct\in\SpinC(X)$, the aforementioned construction gives the concrete $\mywedge_n^\star$-homomorphism
\begin{align*}
	\HFLs(X,\Surface,\Path,\spinct)=\fmap_{X,\Surface,\Path,\spinct}^\star:
	\HFLs(Y,L,\ppoint,\spinct|_{Y})\ra 	\HFLs(Y',L',\ppoint',\spinct|_{Y'}).
\end{align*}	
For $\star=\circ,\wedge$, one may add all these $\mywedge_n^\star$-homomorphisms to obtain a well-defined  $\mywedge_n^\star$-homomorphism
\begin{align*}
	\HFLs(X,\Surface,\Path)=\fmap_{X,\Surface,\Path}^\star:
	\HFLs(Y,L,\ppoint)\ra 	\HFLs(Y',L',\ppoint').
\end{align*}	
This gives the Heegaard-Floer functors
 \begin{align*}
 	\HFLs_n:\nPLinks\ra \HFHsn,\quad\text{for}\ \ \star\in\{\circ,\wedge\},\ n\in\Z^+.
 \end{align*}		

 The action of $\mywedge^\star_n$ on  $\HFLs(Y,L,\ppoint)$ comes from the markings  in $\ppoint$ (more precisely, from the basepoints  in $\wpoint$). For  $w\in\wpoint$ denote the corresponding basepoint action map by $\Phi^\star_w$. Set
\begin{align*}
	\HFKs(Y,L;\ppoint)=\left\{\x\in\HFLs(Y,\ppoint)\ \big|\ \Phi^\star_w(\x)=0,\ \ \forall\ w\in\wpoint\right\}.
\end{align*}
If $L$ is a knot and $|\ppoint|=1$, then $\HFKm(Y,L;\ppoint)=\HFLm(Y,L,\ppoint)$ (see Proposition~\ref{prop:HF-of-knots}) while the groups $\HFKs(Y,L;\ppoint)$ and $\HFLs(Y,L,\ppoint)$ are typically different for  $\star=\circ,\wedge$.  For a decorated link cobordism $(X,\Surface,\Path)$ and $\spinct\in\SpinC(X)$ as above,
the $\mywedge_n^\star$-homomorphism $\fmap_{X,\Surface,\Path,\spinct}^\star$  induces a concrete $\Abb^\star$-homomorphism
\begin{align*}
\HFKs(X,\Surface,\spinct;\Path)=\gmap_{X,\Surface,\spinct;\Path}^\star:
\HFKs(Y,L,\spinct|_{Y};\ppoint)\ra 	\HFKs(Y',L',\spinct|_{Y'};\ppoint').	
\end{align*}	

We may formulate a more detailed and general form of Theorem~\ref{thm:main-intro} as the following theorem. 

\begin{thm}\label{thm:main-intro-2}	
For $\star=-,\circ,\wedge$, the $\Abb^\star$-module $\HFKs(Y,L;\ppoint)$ associated with the pointed link $(Y,L,\ppoint)$ is independent of $\ppoint$ and may be constructed as a concrete $\Abb^\star$-module 
\begin{align*}
\HFKs(Y,L)=\bigoplus_{\spinc\in\SpinC(Y)}\HFKs(Y,L,\spinc)
\end{align*}
 associated with the link $(Y,L)$. Moreover, given a link cobordism $(X,\Surface):(Y,L)\ra (Y',L')$ and  two decorations $\Path$ and $\Path'$ of $\Surface$ and $\spinct\in\SpinC(X)$,
the maps
\begin{align*}
\gmap_{X,\Surface,\spinct;\Path}^\star,\gmap_{X,\Surface,\spinct;\Path'}^\star
:\HFKs(Y,L,\spinct|_{Y})\ra 	\HFKs(Y',L',\spinct|_{Y'})
\end{align*}	
are identical on $\HFKsw(Y,L,\spinct|_{Y})=\HFKs(Y,L,\spinct|_{Y})\cap (\zvar\HFLs(Y,L,\ppoint,\spinct|_{Y}))$. In particular, for $\star=-,\circ$ we obtain a well-defined $\Abb^\star$-homomorphism
\begin{align*}
	\gmap_{X,\Surface,\spinct}^\star
	:\HFKsw(Y,L,\spinct|_{Y})
	\ra \HFKsw(Y',L',\spinct|_{Y'}).
\end{align*}	
Moreover, if $\Surface$ is a union of cylinders, $\gmap_{X,\Surface,\spinct;\Path}^\star=\gmap_{X,\Surface,\spinct;\Path'}^\star$ and we obtain a well-defined map
\begin{align*}
	\gmap_{X,\Surface,\spinct}^\star:\HFKs(Y,L,\spinct|_{Y})\ra \HFKs(Y',L',\spinct|_{Y'})\quad\text{for}\ \ \star\in\{-,\circ,\wedge\}.
\end{align*}	
\end{thm}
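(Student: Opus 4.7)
The plan is to reduce the three assertions to a uniform chain-level analysis, exploiting that $\HFKs(Y,L;\ppoint)$ and $\HFKsw(Y,L;\ppoint)$ are by definition simultaneous kernels of the basepoint action maps $\Phi^\star_w$, and that both the stabilization of pointings and the decoration-change formulas of \cite{AE-2, Zemke-1} produce correction terms that factor through these actions.

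For the first assertion, I would connect any two pointings $\ppoint,\ppoint'$ of $L$ by a sequence of elementary moves: sliding a marking along a component past no other marking, permuting two markings on the same component, and inserting or deleting a marking. The first two preserve the chain complex and each basepoint action on the nose, so they preserve the kernel. For the third, the stabilization formula yields a canonical identification $\HFLs(Y,L,\ppoint\cup\{q\})\cong \HFLs(Y,L,\ppoint)\otimes\Lambda$ of $\mywedge^\star_n$-modules, with $\Lambda$ an elementary two-generator exterior factor on which the new action $\Phi^\star_q$ is contraction by the generator. Taking the simultaneous kernel of all basepoint actions then produces a canonical $\Abb^\star$-linear isomorphism $\HFKs(Y,L;\ppoint\cup\{q\})\cong\HFKs(Y,L;\ppoint)$. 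A coherence check, verifying that the resulting identification between any two pointings is independent of the chosen reduction path, furnishes a concrete $\Abb^\star$-module $\HFKs(Y,L)$ with the $\SpinC$-decomposition inherited from $\HFLs$.

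For the second assertion, I would fix endpoint pointings and decompose any isotopy between two decorations $\Path,\Path'$ of $\Surface$ rel boundary into elementary moves: proper isotopies, band slides of one strand past another, and passages of a strand across a critical point of $\Surface$. Isotopies give equalities of cobordism maps. For each nontrivial move, the change-of-decoration formulas of \cite{AE-2, Zemke-1} express the difference $\fmap^\star_{X,\Surface,\Path,\spinct}-\fmap^\star_{X,\Surface,\Path',\spinct}$ as a $\mywedge_n^\star$-linear sum of compositions, each containing at least one $\Phi^\star_w$ as an internal factor. On elements of $\HFKs(Y,L;\ppoint)$ the innermost $\Phi^\star_w$ need not vanish, but a bookkeeping argument tracking the variable (either $\wvar$ or $\zvar$) that each correction is proportional to shows that the output lies in $\zvar\HFLs(Y',L',\ppoint')$. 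When the input is further restricted to $\HFKsw=\HFKs\cap\zvar\HFLs$, an additional divisibility argument — using that $\zvar$ is regular on the relevant summands — shows that the correction itself is chain-homotopic to zero on the target $\HFKsw'$, establishing the desired equality on $\HFKsw$ and yielding a well-defined $\gmap^\star_{X,\Surface,\spinct}$ for $\star\in\{-,\circ\}$.

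For the third assertion, when $\Surface$ is a disjoint union of cylinders the decorations $\Path,\Path'$ with fixed boundary pointings differ only by braid-like isotopies within the cylinders, with no strand ever crossing a critical point of $\Surface$. The correction terms from these simpler moves are already chain-homotopic to zero on the full complex rather than merely landing in $\zvar\HFLs$, so the equality extends to all of $\HFKs(Y,L)$ for every $\star\in\{-,\circ,\wedge\}$. The main technical obstacle in this plan is the algebraic bookkeeping in the second assertion: identifying precisely how each chain-level correction from a decoration move decomposes as a $\Phi^\star_w$-factor composed with a $\zvar$-divisible operator. This requires a careful reading of the chain-level formulas from \cite{AE-2, Zemke-1} and a systematic analysis of the interaction between the $\wvar$- and $\zvar$-actions and the maps $\Phi^\star_w$; once the identity is in place the rest of the argument is formal module theory.
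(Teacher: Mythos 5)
Your sketch for the first assertion is broadly aligned with the paper's Proposition~\ref{prop:weak-HF}: the paper implements your stabilization factorization via Zemke's quasi-stabilization maps $S^\pm_{w,z}$ and their relations, which give the precise commutation with the other $\Phi_{w'}^\star$ needed to show the identification restricts to the kernel, and the coherence check is exactly \cite[Proposition~4.18]{Zemke-1}. For the third assertion, the framework is different (the paper works with the combinatorics of cell decompositions of the abstract closed surface $\bSurface$ rather than Morse-theoretic band slides), but the eventual mechanism is similar: on a cylinder every change of decoration reduces to simple perturbations/deperturbations and boundary twists, all of which induce the identity. Note, however, that your phrase ``decorations with fixed boundary pointings'' is off: different decorations of the same concordance need not induce the same pointings on $L$ and $L'$; the invariance of the domain and target under change of pointing from the first assertion is what makes the comparison meaningful at all, and your sketch does not acknowledge this dependency explicitly.

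The genuine gap is in your treatment of the second assertion. You claim the correction terms from a general decoration move land in $\zvar\cdot\HFLs$ and then ``a divisibility argument — using that $\zvar$ is regular on the relevant summands — shows the correction is chain-homotopic to zero.'' But $\zvar$ is emphatically not a regular element on $\HFKs$ or $\HFLs$ in general: essentially every nontrivial example in the paper (see Section~\ref{subsec:examples}) has $\zvar$-torsion, e.g. $\HFKh(S^3,T_{2,3})=\Ringh\llb0,2\rrb\oplus\Fbb\llb2,0\rrb$. The paper's actual argument for Theorem~\ref{thm:perturbation-invariance-general} avoids any such regularity: it constructs a Heegaard quadruple encoding both a general perturbation and an associated simple perturbation, and deduces from associativity and the point-shift computation (Proposition~\ref{prop:point-shift}) the exact factorization $\zvar\cdot\fmap^\star_2(\x)=\fmap^\star_1(\fmap^\star_3(\x))$, together with the identification of $\fmap^\star_1$ as $\zvar\cdot\mathrm{Id}$ on the kernel subspace. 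Invariance on $\HFKsw$ then follows by writing $\x=\zvar\y$ and computing both sides directly — no cancellation of $\zvar$ ever occurs. Your plan is missing this key structural idea (the reduction of a general perturbation to a composition of trisection maps whose middle layer is computable), as well as the preliminary combinatorial input (Theorem~\ref{thm:moving-between-decorations}) that any two decorations are related by perturbations, deperturbations and twists; without the former, the ``bookkeeping argument'' you defer to cannot close, and the divisibility step you invoke is simply false.
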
	

The equality of $\gmap_{X,\Surface,\spinct;\Path}^\star$ and $\gmap_{X,\Surface,\spinct;\Path'}^\star$ on $\HFKsw(Y,L,\spinct|_{Y})$ is very interesting, since they do not seem to be the same map on $\HFKs(Y,L,\spinct|_{Y})$. Nevertheless, the author does not have examples of decorations $\Path$ and $\Path'$ of the same link cobordism $(X,\Surface):(Y,L)\ra (Y',L')$ such that $\gmap_{X,\Surface,\spinct;\Path}^\star\neq \gmap_{X,\Surface,\spinct;\Path'}^\star$ as maps from $\HFKs(Y,L,\spinct|_{Y})$ to	$\HFKs(Y',L',\spinct|_{Y'})$.\\

In the case of hat theory, we may  compare the decorated link cobordism TQFT with the link cobordism TQFT of this paper.  For this purpose, note that taking the kernel of the action of $\mywedge^\star_n$ gives a functor
\begin{align*}
\incl^\star_n:\HFHsn\ra \HFHstar.	
\end{align*}	
Denote by $\inclj^\star_n:\HFHsn\ra \HFHstar$ the functor which assigns the $\Rings$-module $\incl^\star_n(H)\cap (\zvar H)$ to a $\mywedge^\star_n$-module $H$.
There are also forgetful functors (which forget the markings and decorations on links and  cobordisms/concordances), and are denoted by
\begin{align*}
\forget_n:\nPLinks\ra \Links	\quad\text{and}\quad    \forget^c_n:\nPLinksC\ra \LinksC.
\end{align*}	
For $\star=\circ,\wedge$, let us denote the restriction of $\HFLs_n$ to the subcategory $\nPLinksC$ by $\HFLs_n$ as well. Theorem~\ref{thm:main-intro-2} then gives the following conclusion.
\begin{thm}\label{thm:main-intro-3}
With the above notation in place, the following diagrams are commutative
	\begin{center}
			\begin{tikzcd}[ row sep=large, column sep=large, execute at end picture={\draw[->] (-0.3,0) arc[start angle=-180,delta angle=300,radius=3mm];}]
			\nPLinks\arrow[r,"\HFLh_n"]
			\arrow[d,"\forget_n"']
			&\HFHhn\arrow[d,"\inclj^\circ_n"]	\\
			\Links\arrow[r,"\HFKhw"']&
			\HFHAhat
		\end{tikzcd}
\quad\text{and}\quad		
			\begin{tikzcd}[ row sep=large, column sep=large, execute at end picture={\draw[->] (-0.3,0) arc[start angle=-180,delta angle=300,radius=3mm];}]
					\nPLinksC\arrow[r,"\HFLs_n"]
					\arrow[d,"\forget^c_n"']
					&\HFHsn\arrow[d,"\incl^\star_n"]	\\
					\LinksC\arrow[r,"\HFKs"']&
					\HFHstar
				\end{tikzcd},
		\end{center}	
where $\star=\circ,\wedge$. 
In other words, we have $\inclj^\circ_n\circ\HFLh_n=\HFKhw\circ\forget_n$ and $\incl^\star_n\circ\HFLs_n=\HFKs\circ\forget^c_n$.	
\end{thm}

Decorated link cobordisms and the corresponding Heegaard-Floer maps are studied extensively in the literature in the past couple of years. Some of the results may be translated to conclusions about the (undecorated) link cobordism maps.

\begin{defn}\label{defn:compression-disk} 
	Let $(X,\Surface):(Y,L)\ra (Y',L')$ denote a link  cobordism. A  properly embedded disk $D$ in $X-\Surface$ is called a {\emph{compression disk}} for $(X,\Surface)$ if  $\partial D\subset \Surface$ is homotopically non-trivial, while there is a closed subset $W$ of $X$ which is diffeomorphic to $D\times [-1,1]^2$, so that $D$ is identified as $D\times\{(0,0)\}$, while $W\cap \Surface=\partial D\times [-1,1]\times \{0\}$  is a neighborhood of $\partial D$ in $\Surface$.
	If $D$ is a compression disk for $(X,\Surface)$ as above, the link cobordism $(X,\Surface_D):(Y,L)\ra (Y',L')$, with
	\begin{align*}
		\Surface_D=\big(\Surface-(\partial D\times [-1,1]\times\{0\})\big)\cup\big(
		D\times\{-1,1\}\times\{0\}\big),	
	\end{align*}	 
	is called the {\emph{compression}} of $(X,\Surface)$ along $D$. We then write $(X,\Surface)\leadsto_D(X,\Surface_D)$.
\end{defn} 
Compression along a disk increases the Euler characteristic of a surface, without changing the corresponding (relative) homology group. In particular, genus minimizing surfaces may not be compressed. The following theorem provides an obstruction for the existence of compression disks.
\begin{thm}\label{them:compression-along-disk-intro}
	Suppose that the link cobordism $(X,\Surface_D):(Y,L)\ra (Y',L')$ is obtained from the link cobordism $(X,\Surface):(Y,L)\ra (Y',L')$ by compressing along a compression disk $D$, as above. Then $	\gmap_{X,\Surface}^\star=\zvar\cdot\gmap_{X,\Surface_D}^\star$ for $\star=-,\circ$.	
\end{thm}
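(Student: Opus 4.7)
The plan is to reduce this identity to a local computation in the decorated link cobordism TQFT of \cite{AE-2, Zemke-1}, performed inside the product neighborhood $W = D \times [-1,1]^2$ of the compression disk $D$. I would first choose a decoration $\Path$ of $\Surface$ whose arcs lie entirely in $\Surface \setminus W$; this is possible because the tube $\Surface \cap W = \partial D \times [-1,1] \times \{0\}$ sits in the interior of $\Surface$ and can be avoided. Since $\Surface$ and $\Surface_D$ agree outside $W$, the same arcs form a decoration $\Path_D$ of $\Surface_D$ disjoint from the two disks $\Surface_D \cap W = D \times \{\pm 1\} \times \{0\}$. By Theorem~\ref{thm:main-intro-2}, the decorated maps $\gmap^\star_{X,\Surface,\spinct;\Path}$ and $\gmap^\star_{X,\Surface_D,\spinct;\Path_D}$ restricted to $\HFKsw$ coincide with the undecorated maps $\gmap^\star_{X,\Surface,\spinct}$ and $\gmap^\star_{X,\Surface_D,\spinct}$, so it suffices to compare the decorated versions.

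Next, by the composition law of the decorated TQFT, I would cut $X$ along a separating $3$-manifold disjoint from $\Path$ that encloses $W$ on one side. Both decorated maps then factor as a common ``outside'' map precomposed with local cobordism maps for the $4$-ball pairs $(W, \Surface \cap W)$ and $(W, \Surface_D \cap W)$. Since $\Surface \cap \partial W = \Surface_D \cap \partial W$ is the same pair of unknotted circles in $S^3 = \partial W$, the theorem reduces to a universal local identity comparing the tube cobordism (surface $1$-handle) filling these circles to the pair of caps filling them, with no dividing arcs interacting with $W$.

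The local cobordism $(W, \Surface_D \cap W)$ is the standard birth cobordism creating two unknots, while $(W, \Surface \cap W)$ is the tube connecting them. By the elementary surface $1$-handle formula in Zemke's calculus \cite{Zemke-1} (cf.~\cite{AE-2}), the ratio of these two local maps is a universal endomorphism built from multiplication by $\wvar, \zvar$ and from the basepoint action maps $\Phi^\star_w$ associated to the new basepoints on the tube. When restricted to $\HFKsw = \HFKs \cap \zvar \HFLs$, the $\Phi^\star_w$-contributions vanish by the very definition of $\HFKs$, and the remaining polynomial factor simplifies to $\zvar$, yielding the identity $\gmap^\star_{X,\Surface,\spinct} = \zvar \cdot \gmap^\star_{X,\Surface_D,\spinct}$ for $\star = -, \circ$.

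The main obstacle is this last step: pinning down the precise form of the local tube attachment map in the decorated theory and verifying that its restriction to $\HFKsw$ is exactly multiplication by $\zvar$. This requires a model Heegaard diagram for the tube cobordism, an explicit enumeration of the contributing holomorphic disks with their $\wvar$- and $\zvar$-powers, and a careful collapse of the resulting expression modulo the two constraints $\Phi^\star_w = 0$ and $\x \in \zvar \HFLs$ that together cut out $\HFKsw$ inside $\HFLs$.
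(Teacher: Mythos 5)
Your high-level intuition is right — compressing a tube to two caps should cost a factor of $\zvar$, and this should be a local computation — but the reduction step as written has a genuine gap, and the paper's proof routes around it in a way worth noting.

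The problem is your cut-and-factor step. You propose to excise a ball $W\cong B^4$ and compare the ``local cobordism maps'' $(W,\Surface\cap W)$ and $(W,\Surface_D\cap W)$. But $(W,\Surface\cap W)$ is not a morphism in the category $\Links$ of Definition~\ref{defn:links}: $\partial W\cong S^3$, so viewing $W$ as a cobordism forces $\emptyset$ as one of its ends, and the definition of a link requires it to meet every component of the ambient $3$-manifold, so $\emptyset$ is excluded. Consequently the functor $\HFKs$ does not assign anything to $(W,\Surface\cap W)$ as a standalone piece, and ``both decorated maps factor as a common outside map precomposed with local cobordism maps'' does not follow. To make the comparison local you still need a \emph{horizontal} slicing of $X$ that produces genuine cobordisms $(Y''\times[a,b],\cdot)$ containing the tube, not a ball excision. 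You also acknowledge that the resulting local identity (that the tube map equals $\zvar$ times the caps map on $\HFKsw$) is unverified; that is precisely the content that has to be supplied.

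The paper's proof does both of these things at once. It extends the height function on $D$ to a Morse function $h$ on $W=D\times[-1,1]^2$ (hence on $X$) with no critical points on $X$, arranged so that $h|_\Surface$ has exactly two index-$1$ critical points inside $W$ and all other critical values outside $[-1,1]$. Slicing along level sets then gives $(X,\Surface)=(X_2,\Surface_2)\circ(Y''\times[0,1],\Surface'')\circ(X_1,\Surface_1)$ and $(X,\Surface_D)=(X_2,\Surface_2)\circ(X_1,\Surface_1)$, so the difference between the two cobordisms is isolated in a \emph{product} $3$-manifold slab, not a $4$-ball. The key observation is then that $(Y''\times[0,1],\Surface'')$ itself factors as an elementary (band) cobordism followed by its reverse, and Proposition~\ref{prop:reversing-elementary-cobordism} (i.e.\ \cite[Lemma~2.1]{AE-unknotting}) supplies the local identity: $\gmap^\star_{X,\oS}\circ\gmap^\star_{X,\Surface'}=\zvar\cdot Id$ on $\HFKsw$ for $\star\in\{-,\circ\}$. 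Substituting into the composition law gives $\gmap^\star_{X,\Surface}=\zvar\cdot\gmap^\star_{X,\Surface_D}$. So the missing ingredient in your argument — the concrete form of the ``tube versus two caps'' local map — is exactly Proposition~\ref{prop:reversing-elementary-cobordism}, and the mechanism for localizing to it is Morse-theoretic level slicing rather than ball excision.
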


The construction of link cobordism invariants which are not sensitive to the decoration automatically gives invariants of smooth slice disks and slice surfaces.  
\begin{defn}
Given a slice knot $(S^3,K)$,  a slice disk $(B^4,D)$ for $K$ and $\star\in\{-,\circ,\wedge\}$, the Heegaard-Floer class $\tmap_{D,K}^\star\in\HFKs(S^3,K)$ is defined equal to $\gmap^\star_{X,\Surface}(1)$, where $(X,\Surface)$ is a link cobordism from the unknot $(S^3,U)$ to $(S^3,K)$ which is obtained from $(B^4,D)$ by removing a small ball  neighborhood of the center of the disk $D$. Similarly, for a slice surface $(B^4,D)$ for an arbitrary knot $(S^3,K)$ and $\star\in\{-,\circ\}$, the Heegaard-Floer class $\tmap_{D,K}^\star\in\HFKsw(S^3,K)$ is defined equal to $\gmap^\star_{X,\Surface}(\zvar)$, where as before, $(X,\Surface):(S^3,U)\ra (S^3,K)$ is obtained from $(B^4,D)$ by removing a small ball  neighborhood of a point in the interior of $D$ and $\zvar$ denotes the generator of $\HFKsw(S^3,U)$. 	
\end{defn}	

Given a slice disk $D$ for a slice knot $(S^3,K)$, a closely related invariant $\tmap_{D,K,\ppoint}\in\HFLhh(K,\ppoint)$ is defined in \cite{JM-Concordance}, where $\ppoint$ is a marked point on $K$. The construction is used in \cite{JZ-slice-disks} to distinguish non-isotopic slice disks for the same knot from one another. The dependence of $\HFLhh(K,\ppoint)$ on the decoration forces a detour for actual applications of the invariants of \cite{JM-Concordance}. Our construction implies that $\tmap_{D,K}^\star$ may be constructed as a concrete class in a concrete group (i.e. in $\HFKs(S^3,K)$) and may thus be used directly to distinguish between non-isotopic slice disks for the same knot. In particular, we give examples of slice disks for $K\#-K$, where $K$ is the figure-eight knot, with different invariants in Example~\ref{ex:slice-disks-for-F8}. Several computations and some of the basic properties of the invariants are explored in Section~\ref{sec:properties}. For other interesting examples and computations, the reader is referred  to \cite{Mahkam}. \\ 

{\color{black!50!blue}\textbf{Acknowledgements.}}
The author would like to thank Ian Zemke, David Gay and Roohallah Mahkam for very helpful discussions.
\section{Background on pointed trisection diagrams}\label{sec:trisection-diagrams}
\subsection{Trisection diagrams for decorated link cobordisms}\label{subsec:trisection-diagrams}
A {\emph{cut system}} $\alphas$ on a smooth closed connected surface $\HSurf$ of genus $g$ is a collection of $m\geq g$ disjoint simple closed curves on $\HSurf$ such that $\HSurf-\alphas$ is a (possibly disconnected) genus-$0$ surface. Given a set $\wpoint$ of basepoints on $\HSurf$, two cut systems are {\emph{slide-equivalent}} away from $\wpoint$ if they are related by isotopies and handle-slides away from $\wpoint$. The tuples $(\HSurf,\alphas_1,\ldots,\alphas_k,\wpoint)$ and $(\HSurf',\alphas'_1,\ldots,\alphas'_k,\wpoint')$ (where each $\alphas_i$ is a cut system on $\Sigma$ and each $\alphas'_i$ is a cut system on $\Sigma'$) are {\emph{slide-diffeomorphic}} if there is a diffeomorphism $f:(\HSurf,\wpoint)\ra (\HSurf',\wpoint')$ (mapping $\wpoint$ to $\wpoint'$) such that $f(\alphas_i)$ is slide-equivalent to $\alphas'_i$ away from $\wpoint'$ for every $i\in\Z/k$. \\

When $k=2$, the tuple $H=(\HSurf,\alphas_1,\alphas_2,\wpoint)$ is a pointed Heegaard diagram which determines a sutured manifold. In particular, if the set of basepoints is a union $\wpoint\amalg\zpoint$, where each one of $\wpoint$ and $\zpoint$ includes precisely one basepoint in each connected component of $\HSurf-\alphas_j$ for $j=1,2$, the pointed Heegaard diagram $H=(\HSurf,\alphas_1,\alphas_2,\wpoint,\zpoint)$ determines an oriented pointed link $\Link_H$ in a closed $3$-manifold $Y_H$, which consist of an oriented link $L_H$ together with a finite collections of  marked points on the link. Associated with every marked point on the oriented link $L_H$, we may consider a small arc on $L_H$ containing it with an end-point $w\in\wpoint$ and an end-point  $z\in\zpoint$, so that the arc is oriented from $w$ to $z$. Therefore, we may assume that $\wpoint,\zpoint\subset L_H$, and that the points in $\wpoint$ and $\zpoint$   alternate on each component  of $L_H$. We further assume that  each connected component of $L_H$ has non-empty intersection with either of $\wpoint$ and $\zpoint$.

\begin{defn}\label{defn:trisection-diagram}
	A {\emph{trisection diagram}} of type $(g;n;\cbf)$, with $\cbf=(c_0,c_1,c_2)\in (\Z^+)^3$, is a diagram $H=(\HSurf,\alphas_0,\alphas_1,\alphas_2,\wpoint,\zpoint)$ consisting of  a smooth closed surface $\HSurf$ of genus $g$, a  cut system $\alphas_i$  for every $i\in\Z/3$ and two sets $\wpoint,\zpoint\subset \HSurf-\alphas_0-\alphas_1-\alphas_2$ of basepoints, so that each one of the $n$  components of  $\HSurf-\alphas_i$ includes precisely one basepoint from $\wpoint$ and one from $\zpoint$. Further, the sub-diagram $H_i$ which is obtained from $H$ by removing $\alphas_i$,   is 	a Heegaard diagram for a $c_i$-component pointed link $\Link_i$ in the $3$-manifold $Y_i$. $H$ is called {\emph{nice}} if each $\Link_i$ is null-homologous in $Y_i$, $Y_0=\#^{k_0}(S^2\times S^1)$ for some non-negative integer $k_0$ and $\Link_0$ is a pointed unlink in $Y_0$.	The slide equivalence class of $H$ is denoted by $[H]$. 
\end{defn}

In illustrations of a trisection diagram $H=(\HSurf,\alphas_0,\alphas_1,\alphas_2,\wpoint,\zpoint)$,  we usually denote the curves in $\alphas_0,\alphas_1$ and $\alphas_2$ by red, black and blue curves, respectively. The basepoints in $\wpoint$ and $\zpoint$ are denoted by small filled circles and small empty circles, respectively.\\

Let  $H=(\HSurf,\alphas_0,\alphas_1,\alphas_2,\wpoint,\zpoint)$ be a nice trisection diagram  and $H_i$ denote the link diagram obtained from $H$ by removing $\alphas_i$ for $i\in\Z/3$. The diagram $H$ 
determines
a smooth closed oriented $4$-manifold cobordism $X=X_H$ with boundary $-Y_2\amalg Y_1$, where $Y_i=Y_{H_i}$. 
Moreover, $H$ determines   an oriented  surface $\Surface=\Surface_H\subset X$ with boundary $-L_2\amalg L_1$, where $L_i=L_{H_i}$ is the oriented  link in $Y_i$ underlying the pointed link $\Link_i=\Link_{H_i}$.  Let us briefly describe the construction, which also appears in \cite[Section 2.4]{AE-2}. Let $U_i$ denote the compression body determined by the surface $\HSurf$ and the cut system $\alphas_i$. The basepoints $\wpoint$ and $\zpoint$ determine a union of oriented arcs in $U_i$, which is denoted by $T_i$. We denote the end-point of $T_i$ on $\partial U_i=\HSurf$ by $\wpoint_i,\zpoint_i$ to distinguish them for different $i\in\Z/3$.  Let $\Triangle$ denotes a triangle with vertices $v_0,v_1,v_2$ and edges $e_0,e_1,e_2$ (in the clockwise order) so that $e_i$ connects $v_{i+1}$ to $v_{i-1}$ for every $i\in\Z/3$. Define 
\begin{align}\label{eq:triangulation-of-surface}
	X^\circ_H=\frac{\left(\HSurf\times \Triangle\right)\coprod\left(\coprod_{i\in\Z/3}U_i\times e_i\right)}{\Sigma\times e_i\sim \partial U_i\times e_i}\quad\text{and}\quad
\Surface^\circ_H=\frac{\left(
	(\wpoint\cup\zpoint)
	\times\Triangle\right)\coprod\left(\coprod_{i\in\Z/3}T_i\times e_i\right)}{
	(\wpoint\cup\zpoint)
	\times e_i\sim 
	(\wpoint_i\cup\zpoint_i)
	\times e_i}.
\end{align}

The boundary of $X^\circ_H$ may be identified as $\coprod_{i\in\Z/3}Y_i$.  Since $H$ is nice, the boundary component $Y_0=\#^{k_0}S^2\times S^1$ may be  filled with $\natural^{k_0}(B^3\times S^1)$ to give an oriented $4$-manifold $X_H$, which is a cobordism from $Y_2$ to $Y_1$. Note that filling $Y_0$ with $\natural^{k_0}(B^3\times S^1)$ is uniquely determined up to diffeomorphism. Moreover, we may attach $c_0$ disks to $\Surface^\circ_H$ along $L_0\subset Y_0$, or equivalently, retract each connected component of $L_0$ to a point in $Y_0$, to obtain the {\emph{link cobordism}} $(X_H,\Surface_H)$ from $(Y_2,L_2)$ to $(Y_1,L_1)$. We then write $(X_H,\Surface_H):(Y_2,L_2)\ra (Y_1,L_1)$. 
Note that the Euler characteristic of $\Surface_H$ is given by $\chi(\Surface_H)=c_0-n$.
\\

We denote by $\bSurface=\bSurface_H$ the quotient of $\Surface$ obtained by contracting each connected component of $L_1\cup L_2$ to a point (or equivalently, the surface obtained by attaching an abstract disk to each connected component of $L_1\cup L_2$). The construction of $\Surface^\circ_H$  from (\ref{eq:triangulation-of-surface}) provides (the isotopy class of) a {\emph{triangulation}} $\Tria_H$ of $\bSurface$. Every component $L^v$ of $L_i$ collapses to a vertex $v$ in $\bSurface$ with label $i\in\Z/3$ and such points form the vertices of $\Tria_H$. 
We may also retract each connected component of $T_i\times e_i$ (which is a rectangle) to the edge in $\partial^-T_i\times e_i$ (with label $e_i$) to obtain the aforementioned triangulation of $\bSurface$. Each triangle in $\Tria_H$ corresponds either to a basepoint in $\wpoint$ or to a basepoint in $\zpoint$, and may be colored gray or pink accordingly. If a basepoint $w\in\wpoint$ is in the same connected component of $\HSurf-\alphas_i$ as $z\in\zpoint$, the corresponding triangles have an edge with label $e_i$ in common. We denote the vertices labeled $0,1$ and $2$ (and the edges labeled $e_0,e_1$ and $e_2$) by red, black and  blue  colors, respectively. Therefore, the edges and their two endpoints have different colors.
  \\

If we remove the red vertices, and the black and blue edges from $\Tria_H$, a cell decomposition $\Cell_H$ of $\bSurface_H$ is obtained. The vertices of $\Cell_H$ are either black or blue, and the edges are all red, and every domain in the complement of the vertices and the edges is a polygon. $\Tria_H$ may be reconstructed from $\Cell_H$, by inserting a red vertex in the center of each polygon and connecting it to the black and blue vertices on the boundary of the polygon by blue and black edges, respectively. The cell decomposition $\Cell_H$ comes from a properly embedded $1$-manifold $\Deco_H$ on $\Surface_H$, whose components connect $L_2$ to $L_1$, and is determined by $H$. Let $\Surface_\zpoint$ denote a tubular neighborhood of $\Deco_H$ in $\Surface_H$ and $\Surface_\wpoint$ denote the closure of $\Surface-\Surface_\zpoint$. Then $\Surface_\wpoint$ and $\Surface_\zpoint$ intersect in a dividing set $\Divide_H$ and $(\Surface_H,\Divide_H)$ give a {\emph{decoration}} $\SCob_H$ of $\Surface_H$ and 
$(X_H,\SCob_H):(Y_2,\Link_2)\ra (Y_1,\Link_1)$ is a {\emph{decorated cobordism}}. These decorated cobordisms are special cases of the construction of \cite{Zemke-1}. The construction is also a special case of the general construction of Alishahi and the author in \cite{AE-2}. Note that $\Cell_H$ determines the isotopy class of the decoration $\SCob_H$  from $\Link_2$ to $\Link_1$ up to twists along $\Link_1$ and $\Link_2$.

\subsection{Stabilizations and perturbations of pointed trisection diagrams}
 The {\emph{stabilizations}} of  a  trisection diagram $H=(\HSurf,\alphas_0,\alphas_1,\alphas_2,\wpoint,\zpoint)$ are obtained as connected sums with one of the diagrams $H^j_s=(T,\alpha^j_0,\alpha^j_1,\alpha^j_2)$ of Figure~\ref{fig:i-stabilization}. 
To form  $H\#_{p}H^j_s$, choose a point \[p\in \HSurf-\alphas_0-\alphas_1-\alphas_2-\wpoint-\zpoint.\] 
We then remove small disk neighborhoods $D_{p}$ and $D_q$ of $p\in \HSurf$ and $q\in T$ from $\HSurf$ and $T$ respectively,  and connect the resulting circle boundaries  by a $1$-handle to obtain a new closed surface $\HSurf'$ of genus $g+1$. We set $\alphas'_i=\alphas_i\cup\{\alpha^j_i\}$ and define the $j$-stabilization of  $H$ at $p$ by 
\[H'=H\#_{p}H^j_s=(\HSurf',\alphas'_0,\alphas'_1,\alphas'_2,\wpoint,\zpoint).\] 
A $j$-stabilization $H'$ of $H$ preserves the boundary manifold $Y_i$ for $i\neq j$, while it changes the boundary manifold 
$Y_j$ to $Y_j\#(S^2\times S^1)$. Moreover,  the pointed links 
\[L'_i=L_{H'_i}\subset Y'_i=Y_{H'_i}=Y_i\quad\text{and}\quad  L'_j\subset Y'_j=Y_j\#(S^2\times S^1)\] 
are naturally identified with the links $L_i\subset Y_i$ and $L_j\subset Y_j\#(S^2\times S^1)$, respectively.  Further, the link cobordisms $\Surface_H$ and $\Surface_{H'}$ are identified and the decorated cobordism $\SCob_{H'}$ is diffeomorphic to $\SCob_H$ (as decorations of $\Surface_H=\Surface_{H'}$). 

\begin{figure}
	\def\svgwidth{0.95\textwidth}
	{\footnotesize{
			\begin{center}
				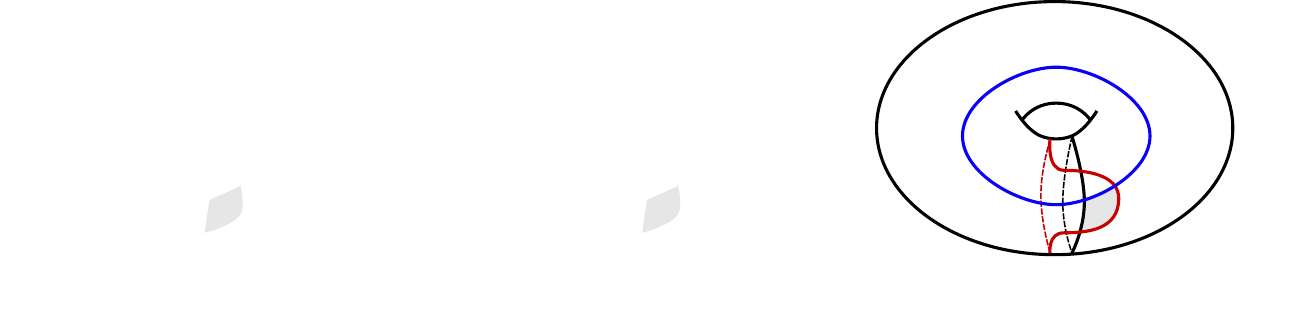
	\end{center}}}
	\caption{The $j$-stabilization is given as the connected sum with the diagram $H^j_s$ for $i\in\Z/3$.}
	\label{fig:i-stabilization}
\end{figure} 

\begin{defn}\label{defn:equivalence-of-HD}
	The nice trisection diagrams $H$ and $H'$ are called {\emph{equivalent}} if after suitable stabilizations of $H$ and $H'$, they are slide equivalent. The decorated cobordisms $(X,\SCob)$ and $(X',\SCob')$ are called {\emph{equivalent}} if the trisection diagrams representing them are equivalent. The equivalence class of $(X,\SCob)$ is denoted by $[X,\SCob]$. We say that a nice trisection diagram $H$ is {\emph{compatible}} with the decorated cobordism $(X,\SCob)$ if $[X_H,\SCob_H]=[X,\SCob]$. We say that the nice trisection diagram $H$ is {\emph{compatible}} with the link cobordism $(X,\Surface)$ if there is a decoration $(X,\SCob)$ of $(X,\Surface)$ such that $H$ is compatible with $(X,\SCob)$. 
\end{defn}
\begin{rmk}\label{rmk:equivalence}
It follows from  Theorem 4.19, Lemma 6.11 and proof of Theorem 7.2 from \cite{AE-2}, that the trisection diagrams $H$ and $H'$  are equivalent if and only if after suitable stabilizations, there is a diffeomorphism of decorated cobordisms between 
\[(X_H,\SCob_H):\left(Y_{H_2},\Link_{H_2}\right)\ra \left(Y_{H_1},\Link_{H_1}\right)
\quad\text{and}\quad
(X_{H'},\SCob_{H'}):\big(Y_{H'_2},\Link_{H'_2}\big)\ra \big(Y_{H'_1},\Link_{H'_1}\big).
\]
In particular, the decorated cobordisms $(X,\SCob)$ and $(X',\SCob')$ are equivalent, if and only if after suitable stabilizations of (either of) them, there is a diffeomorphism of decorated cobordisms between them.
\end{rmk}

Stabilizations/destabilizations and slide diffeomorphisms suffice for going from one  trisection diagram compatible with a decorated cobordism to another such diagram. If we would also like to change the decoration, new types of modifications are necessary, which only change the decoration of the surface, and not the $4$-manifold $X$ or the embedding of the link cobordism in $X$. \\

Consider a decorated cobordism $(X,\TSurface)$, and let $\Surface$ be the underlying surface and $\bSurface$ be the corresponding closed surface obtained by collapsing each boundary component of $\Surface$ to a point. Let $\Tria$ denote the triangulation of $\bSurface$ encoded in $\TSurface$. Choose a red vertex $v$ of $\Tria$  and a pair of edges adjacent to it which are labeled by $e_{1}$ and $e_{2}$, respectively. The choice is determined by  the dashed green curve in Figure~\ref{fig:triangulation}(a). We then cut along the aforementioned two edges, to obtain a new diagram where $v$ is replaced by two red vertices $v'$ and $v''$, while the new diagram includes a $4$-gon domain, as in Figure~\ref{fig:triangulation}(b). The latter domain may be decomposed to a pink triangle and a gray triangle (the {\emph{new}} triangles), as in Figure~\ref{fig:triangulation}(c), to produce the new triangulation $\Tria'$ of $\bSurface$. The triangulation $\Tria'$ on $\bSurface$ determines a new decorated surface $(X,\TSurface')$,   which is represented by another equivalence class of trisection diagrams. If $H'$ belongs to the latter equivalence class, we say that $[H']$ is obtained from $[H]$ by a {\emph{perturbation}}.  The {\emph{deperturbation}} of $\Tria'$ to $\Tria$ may then be described as the process of collapsing the two new triangles to their common edge  (which is labeled $e_0$) so that $v'$ and $v''$ are collapsed to the same vertex $v$, which decomposes the aforementioned edge to the disjoint union of two edges with labels $e_1$ and $e_2$. \\

\begin{figure}
	\def\svgwidth{0.75\textwidth}
	{\footnotesize{
			\begin{center}
\begingroup%
  \makeatletter%
  \providecommand\color[2][]{%
    \errmessage{(Inkscape) Color is used for the text in Inkscape, but the package 'color.sty' is not loaded}%
    \renewcommand\color[2][]{}%
  }%
  \providecommand\transparent[1]{%
    \errmessage{(Inkscape) Transparency is used (non-zero) for the text in Inkscape, but the package 'transparent.sty' is not loaded}%
    \renewcommand\transparent[1]{}%
  }%
  \providecommand\rotatebox[2]{#2}%
  \newcommand*\fsize{\dimexpr\f@size pt\relax}%
  \newcommand*\lineheight[1]{\fontsize{\fsize}{#1\fsize}\selectfont}%
  \ifx\svgwidth\undefined%
    \setlength{\unitlength}{841.49986389bp}%
    \ifx\svgscale\undefined%
      \relax%
    \else%
      \setlength{\unitlength}{\unitlength * \real{\svgscale}}%
    \fi%
  \else%
    \setlength{\unitlength}{\svgwidth}%
  \fi%
  \global\let\svgwidth\undefined%
  \global\let\svgscale\undefined%
  \makeatother%
  \begin{picture}(1,0.32263752)%
    \lineheight{1}%
    \setlength\tabcolsep{0pt}%
    \put(0,0){\includegraphics[width=\unitlength,page=1]{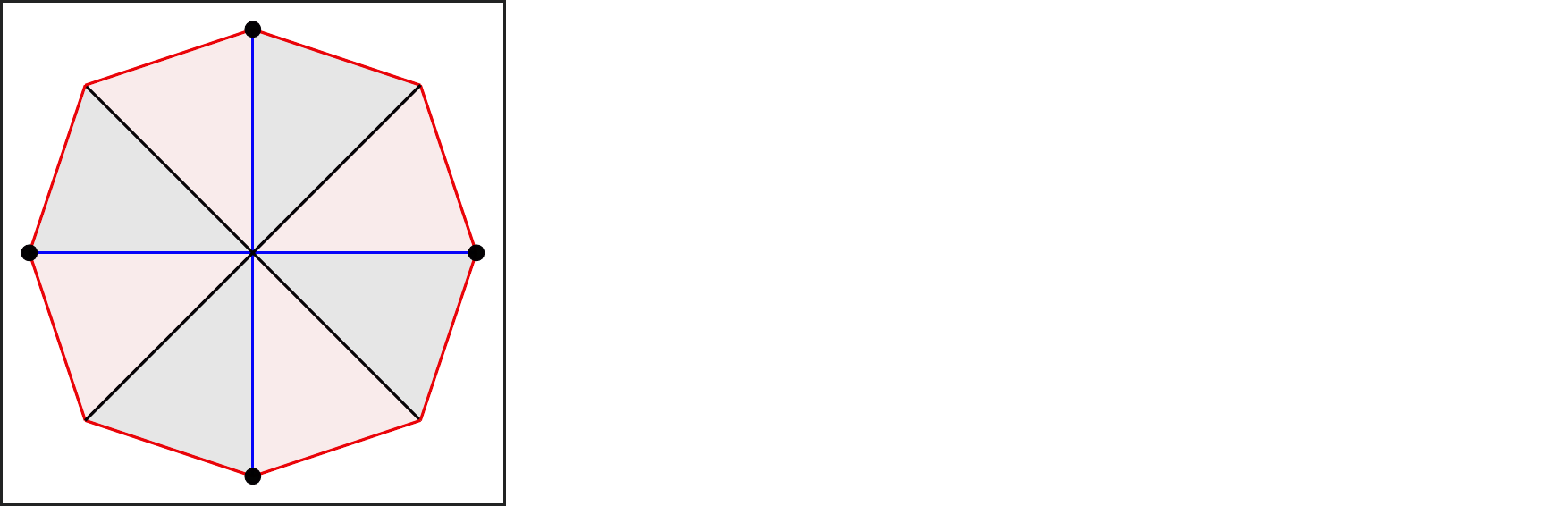}}%
    \put(0.00980364,0.01871584){\color[rgb]{0.01176471,0.01960784,0.02352941}\makebox(0,0)[lt]{\lineheight{1.25}\smash{\begin{tabular}[t]{l}(a)\end{tabular}}}}%
    \put(0,0){\includegraphics[width=\unitlength,page=2]{Triangulation-Perturbation.pdf}}%
    \put(0.34848458,0.01871584){\color[rgb]{0.01176471,0.01960784,0.02352941}\makebox(0,0)[lt]{\lineheight{1.25}\smash{\begin{tabular}[t]{l}(b)\end{tabular}}}}%
    \put(0,0){\includegraphics[width=\unitlength,page=3]{Triangulation-Perturbation.pdf}}%
    \put(0.68716573,0.01871584){\color[rgb]{0.01176471,0.01960784,0.02352941}\makebox(0,0)[lt]{\lineheight{1.25}\smash{\begin{tabular}[t]{l}(c)\end{tabular}}}}%
    \put(0,0){\includegraphics[width=\unitlength,page=4]{Triangulation-Perturbation.pdf}}%
  \end{picture}%
\endgroup%

	\end{center}}}
	\caption{A perturbation of diagram (a) is formed by first, cutting along the dashed green line and replacing the central vertex with two vertices to form diagram (b), and then decomposing the middle $4$-gon to a pair of pink and gray triangles, as in diagram (c).}
	\label{fig:triangulation}
\end{figure}

The corresponding cell decomposition $\Cell$ of $\oS$  is modified in a simple ways: we decompose one cell (which is a $2m$-gon) to two cells (a $2k$ gon and a $2(m-k+1)$-gon), as illustrated in Figure~\ref{fig:perturbation-d} by the modification from diagram $(a)$ to diagram $(b)$. The particular case, where $k=1$ (or $k=m$) is called a {\emph{simple}} perturbation. A simple  perturbation corresponds to repeating a red edge of $\Cell$, which adds one bigon to the set of cells, as the modification from diagram $(a)$ to diagram $(c)$ in  Figure~\ref{fig:perturbation-d} illustrates. Alternatively, a simple perturbation  replaces one component of the $1$-manifold determining the decoration $\SCob$ of the link cobordism $\Surface\subset X$  with two parallel copies of the same component, as illustrated in the modification from  Figure~\ref{fig:Simple-Perturbation-C}(a) to  Figure~\ref{fig:Simple-Perturbation-C}(b).

\begin{figure}
	\def\svgwidth{0.85\textwidth}
	{\footnotesize{
			\begin{center}
\begingroup%
  \makeatletter%
  \providecommand\color[2][]{%
    \errmessage{(Inkscape) Color is used for the text in Inkscape, but the package 'color.sty' is not loaded}%
    \renewcommand\color[2][]{}%
  }%
  \providecommand\transparent[1]{%
    \errmessage{(Inkscape) Transparency is used (non-zero) for the text in Inkscape, but the package 'transparent.sty' is not loaded}%
    \renewcommand\transparent[1]{}%
  }%
  \providecommand\rotatebox[2]{#2}%
  \newcommand*\fsize{\dimexpr\f@size pt\relax}%
  \newcommand*\lineheight[1]{\fontsize{\fsize}{#1\fsize}\selectfont}%
  \ifx\svgwidth\undefined%
    \setlength{\unitlength}{1051.49993381bp}%
    \ifx\svgscale\undefined%
      \relax%
    \else%
      \setlength{\unitlength}{\unitlength * \real{\svgscale}}%
    \fi%
  \else%
    \setlength{\unitlength}{\svgwidth}%
  \fi%
  \global\let\svgwidth\undefined%
  \global\let\svgscale\undefined%
  \makeatother%
  \begin{picture}(1,0.31526337)%
    \lineheight{1}%
    \setlength\tabcolsep{0pt}%
    \put(0,0){\includegraphics[width=\unitlength,page=1]{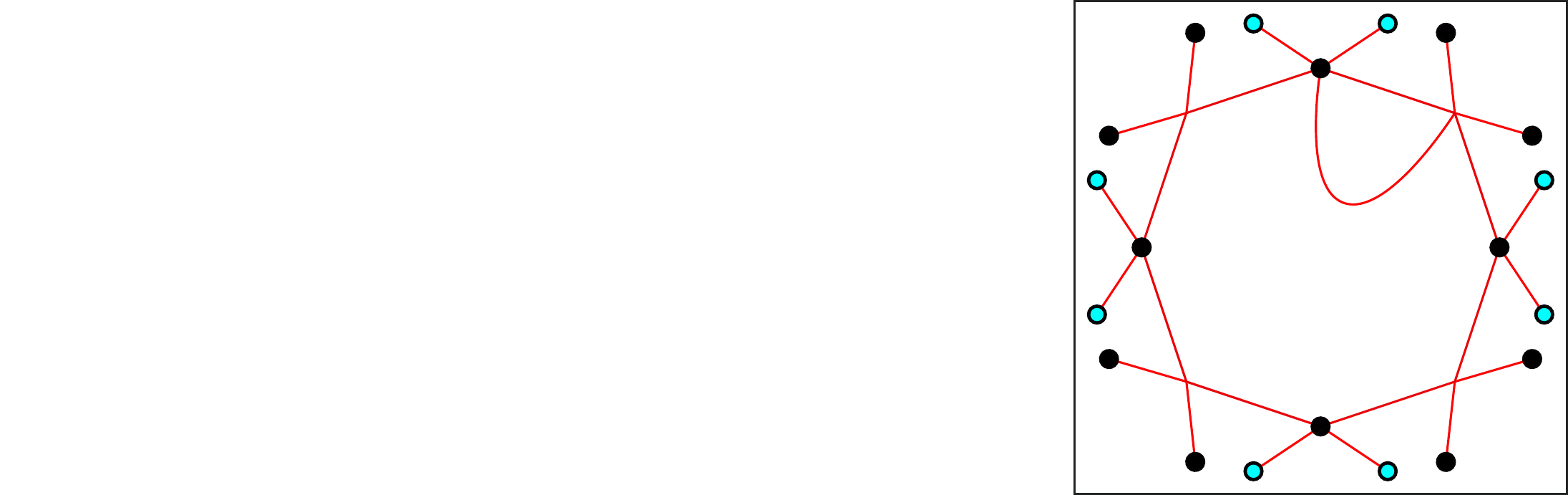}}%
    \put(0.7068474,0.01497789){\color[rgb]{0.01176471,0.01960784,0.02352941}\makebox(0,0)[lt]{\lineheight{1.25}\smash{\begin{tabular}[t]{l}$(c)$\end{tabular}}}}%
    \put(0,0){\includegraphics[width=\unitlength,page=2]{Red-Perturbation.pdf}}%
    \put(0.02211104,0.01497797){\color[rgb]{0.01176471,0.01960784,0.02352941}\makebox(0,0)[lt]{\lineheight{1.25}\smash{\begin{tabular}[t]{l}$(a)$\end{tabular}}}}%
    \put(0,0){\includegraphics[width=\unitlength,page=3]{Red-Perturbation.pdf}}%
    \put(0.36447906,0.01497789){\color[rgb]{0.01176471,0.01960784,0.02352941}\makebox(0,0)[lt]{\lineheight{1.25}\smash{\begin{tabular}[t]{l}$(b)$\end{tabular}}}}%
    \put(0,0){\includegraphics[width=\unitlength,page=4]{Red-Perturbation.pdf}}%
  \end{picture}%
\endgroup%

	\end{center}}}
	\caption{ A perturbation adds one red edge to the decorated cobordism, and is illustrated by the modification of $(a)$ to $(b)$. If one of the new cells is a bigon (as in $(c)$), the perturbation is called simple. }
	\label{fig:perturbation-d}
\end{figure}  

\begin{figure}
	\def\svgwidth{0.95\textwidth}
	{\footnotesize{
			\begin{center}
\begingroup%
  \makeatletter%
  \providecommand\color[2][]{%
    \errmessage{(Inkscape) Color is used for the text in Inkscape, but the package 'color.sty' is not loaded}%
    \renewcommand\color[2][]{}%
  }%
  \providecommand\transparent[1]{%
    \errmessage{(Inkscape) Transparency is used (non-zero) for the text in Inkscape, but the package 'transparent.sty' is not loaded}%
    \renewcommand\transparent[1]{}%
  }%
  \providecommand\rotatebox[2]{#2}%
  \newcommand*\fsize{\dimexpr\f@size pt\relax}%
  \newcommand*\lineheight[1]{\fontsize{\fsize}{#1\fsize}\selectfont}%
  \ifx\svgwidth\undefined%
    \setlength{\unitlength}{1255.58859446bp}%
    \ifx\svgscale\undefined%
      \relax%
    \else%
      \setlength{\unitlength}{\unitlength * \real{\svgscale}}%
    \fi%
  \else%
    \setlength{\unitlength}{\svgwidth}%
  \fi%
  \global\let\svgwidth\undefined%
  \global\let\svgscale\undefined%
  \makeatother%
  \begin{picture}(1,0.26484099)%
    \lineheight{1}%
    \setlength\tabcolsep{0pt}%
    \put(0,0){\includegraphics[width=\unitlength,page=1]{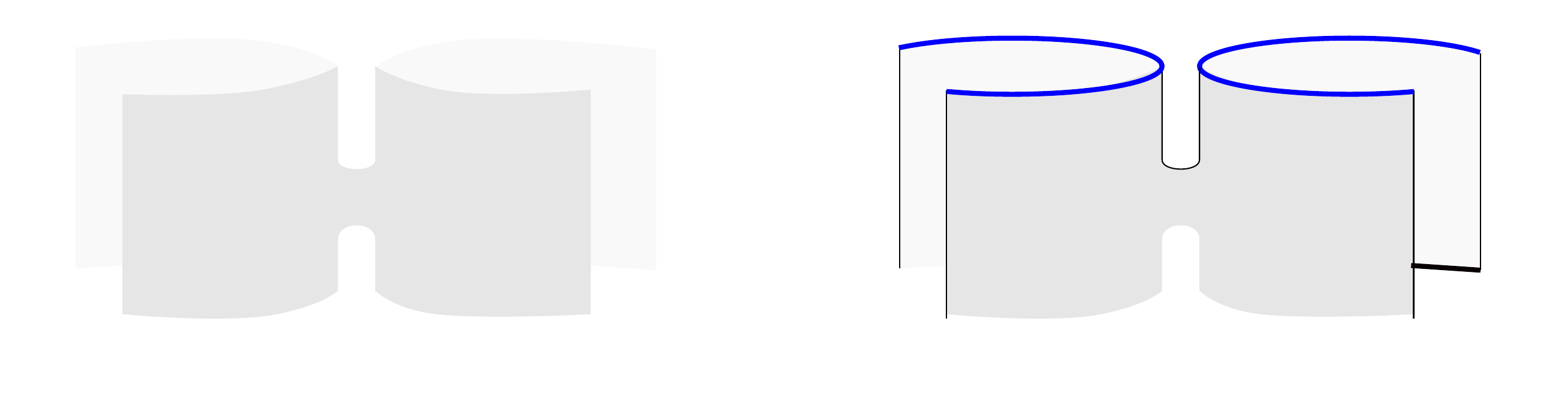}}%
    \put(0.21550787,0.00276265){\color[rgb]{0.10196078,0.10196078,0.10196078}\makebox(0,0)[lt]{\lineheight{1.25}\smash{\begin{tabular}[t]{l}\textbf{(a)}\end{tabular}}}}%
    \put(0,0){\includegraphics[width=\unitlength,page=2]{Simple-Perturbation-C.pdf}}%
    \put(0.74115762,0.00276265){\color[rgb]{0.10196078,0.10196078,0.10196078}\makebox(0,0)[lt]{\lineheight{1.25}\smash{\begin{tabular}[t]{l}\textbf{(b)}\end{tabular}}}}%
    \put(0,0){\includegraphics[width=\unitlength,page=3]{Simple-Perturbation-C.pdf}}%
  \end{picture}%
\endgroup%

	\end{center}}}
	\caption{A simple perturbation replaces one component of the $1$-manifold determining the decoration of the cobordism surface with two parallel copies of the same component, as illustrated in the modification from the decorated cobordism (a) to the decorated cobordism (b).}
	\label{fig:Simple-Perturbation-C}
\end{figure}  
\subsection{Moving between pointed trisection diagrams}\label{subsec:TD-are-related-by-perturbations}
 Since twists along the link components and perturbations do not change the surface or the $4$-manifold, we may regard them as operations on abstract decorated surfaces. An abstract decorated surface is a compact surfaces $\Surface$ with a decomposition of its boundary components to a  positive boundary $\partial^+\Surface$ (colored blue) and a negative boundary $\partial^-\Surface$ (colored black), which is decorated by a properly embedded  $1$-manifold $\Deco$ (without closed components) connecting the positive boundary to the negative boundary (colored red), so that  $\Surface-\Deco$ is a union of polygonal cells. As before, the closed surface obtained by collapsing each boundary component of $\Surface$ to a vertex (with the same color) is denoted by $\oS$ and the induced cell decomposition of $\oS$ is denoted by $\Cell$.  The cell decomposition $\Cell$  (associated with $\SCob$) consists of the vertices $V^+(\Cell)=V^+$ and $V^-(\Cell)=V^-$ and the edges $E(\Cell)$, each connecting a vertex in $V^+$ to a vertex in $V^-$,  which decompose $\oS$ to polygonal cells $D(\Cell)$ (each with an even number of edges). \\
 
 The cell decomposition $\Cell$ is called {\emph{simple}} if no cells in $D(\Cell)$ is a bigon, is called {\emph{complete}} if all cells in $D(\Cell)$ are quadrilaterals, and is called {\emph{deperturbed}} if $D(\Cell)$ consists of a single cell. $\SCob$ is called simple (respectively, complete or deperturbed) if the corresponding cell decomposition $\Cell$ is simple (respectively, complete or deperturbed). The number of edges for $\SCob$ is defined equal to the number of edges in $E(\Cell)$.  If $\SCob$ is an abstract decorated surface with underlying surface $\Surface$, we may apply simple deperturbations until we obtain a simple decoration. Further, deperturbations may be used to obtain a deperturbed decoration.\\
 
 An {\emph{edge switch}} is the process of removing an edge $e\in E(\Cell)$ which is common between two  polygons $P,P'\in D(\Cell)$, and adding an edge $e'$ inside the union polygon $P''=P\cup_e P'$ that connects a black vertex of $P''$ to a blue vertex of it, to create a new cell decomposition. Correspondingly, an edge switch may be regarded as the composition of a deperturbation and a perturbation on an abstract  decorated cobordism $\SCob$, which creates a new abstract decorated cobordism with the same number of edges. Note that an edge switch may be performed in case $P=P'$ as well. In this case, the new edge is required to separate the two copies if $e$ on the boundary of $P$.

\begin{thm}\label{thm:moving-between-decorations}
Every two abstract decorations $\SCob$ and $\SCob'$ of the same underlying surface  $\Surface$
 are related to one another by a sequence of perturbations and deperturbations, followed by twists along the boundary components. Moreover, if $\SCob$ and $\SCob'$ are simple decorations with the same number of edges, they may be changed to one another by a finite sequence of edge switches, followed by twists along the boundary components.
\end{thm}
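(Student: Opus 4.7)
The proof naturally splits into the two assertions of the theorem. For the first part, the plan is to reduce any abstract decoration to a canonical deperturbed form via a sequence of deperturbations, and then to show that two deperturbed decorations of the same underlying surface differ only by boundary twists. Starting from $\SCob$ with cell decomposition $\Cell$, whenever $|D(\Cell)|>1$ one can either pick a bigon (if one exists) and apply a simple deperturbation, or else pick any edge shared by two distinct cells (which exists because $\oS$ is connected) and collapse it; the cell count strictly decreases, so the process terminates in a deperturbed decoration. The Euler formula $\chi(\oS)=|V^+(\Cell)|+|V^-(\Cell)|-|E(\Cell)|+1$ then pins down the edge count purely from the topological data of $\Surface$, so any two deperturbed decorations of $\Surface$ have the same number of edges. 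Such a decoration encodes $\oS$ as a single polygon with edges identified in pairs (each edge joining a black vertex to a blue vertex), so by the classification of fundamental polygons of surfaces with marked boundary vertices, any two deperturbed decorations of $\Surface$ are related by a sequence of moves realizable as boundary twists, each one cyclically shifting the edge endpoints along a chosen boundary component.

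For the second part, let $\SCob$ and $\SCob'$ be simple decorations of $\Surface$ with the same number of edges. By the first part they are connected by a sequence of perturbations, deperturbations, and boundary twists. First push all twists to the end: twists commute with perturbations and deperturbations performed in their complement, so after replacing moves in the middle of the sequence by conjugated versions, the sequence takes the form of perturbations/deperturbations followed by a final twist. Because the initial and final decorations have the same edge count, the numbers of perturbations and deperturbations in this sequence are equal. The key step is then to pair each perturbation with a later deperturbation and to show that such a pair may be realized either by an edge switch or, when it occurs in a region disjoint from other moves, by a trivial operation.

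To keep intermediate decorations simple, I would interleave auxiliary edge switches that absorb any bigons as soon as they appear: if a perturbation creates a bigon, the next deperturbation applied to that bigon is precisely a simple deperturbation, and the composite is an edge switch that modifies $\Cell$ by removing one edge and adding another. More generally, a non-adjacent pair (perturbation followed later by deperturbation) can be commuted past intervening moves affecting disjoint cells until the pair becomes adjacent and forms an edge switch. The final twist can then be absorbed into the definition, since twists along boundary components are part of the allowed moves in the statement.

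The main obstacle is the combinatorial pairing/reorganization step: proving that an arbitrary alternating sequence of perturbations and deperturbations between two simple decorations with the same edge count can be rewritten as a sequence of edge switches without ever producing an intermediate decoration that cannot be repaired by auxiliary edge switches. I expect this to be settled by an induction on the total ``excess'' $\sum_t (|D(\Cell_t)|-|D(\Cell)|)$ summed over the intermediate cell decompositions $\Cell_t$, together with the local observation that a perturbation immediately followed by a deperturbation is either trivial or an edge switch. Once this reorganization succeeds, the second statement follows.
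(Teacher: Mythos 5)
There is a genuine gap in your first part. You assert that two deperturbed decorations of the same underlying surface $\Surface$ differ only by boundary twists, citing ``the classification of fundamental polygons of surfaces with marked boundary vertices.'' This is not true. A boundary twist merely rotates the cyclic order of arc-ends at a given vertex; it cannot change the combinatorial pattern by which the polygon's sides are glued, nor the isotopy class of the system of arcs cutting $\oS$ open. Two deperturbed cell decompositions of $\oS$ with the same vertex sets $V^\pm$ can carry genuinely different gluing words and non-isotopic arc systems. What the classification of fundamental polygons actually delivers is that any two presentations are related by cut-and-paste moves, and in the present setting the cut-and-paste move is precisely an \emph{edge switch} (a perturbation immediately followed by a deperturbation), which is a strictly larger class of moves than boundary twists. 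This is exactly the content of the paper's Lemma~\ref{lem:changing-deperturbed-cell-decomps}. The role of boundary twists, in the paper's argument, is only to absorb the ambiguity in lifting a cell decomposition of the collapsed surface $\oS$ back to a decoration of $\Surface$; it has nothing to do with moving between non-isotopic deperturbed cell decompositions. Once the first part is repaired (replace ``boundary twists'' by ``edge switches, followed by boundary twists''), the first claim still follows because each edge switch unpacks to a perturbation and a deperturbation.

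The second part inherits this error and, separately, leaves its central step as a conjecture. Your plan is to push twists to the end, note the perturbation and deperturbation counts are equal, and then ``pair'' each perturbation with a later deperturbation via commutations on disjoint cells — but you acknowledge this pairing/reorganization is the main obstacle and appeal to an unproven induction on excess. That pairing step is precisely the nontrivial content the theorem requires, and the disjointness you rely on need not hold: a perturbation and a deperturbation can both hit the same polygon without together forming an edge switch. The paper handles this by proving a concrete combinatorial reduction — that in a simple decoration, a sequence of $k$ deperturbations followed by one perturbation can be rewritten as edge switches followed by $k-1$ deperturbations — and this in turn rests on Lemma~\ref{lem:rectangulations}, a statement about rectangulations of a polygon that any correct proof will need an analogue of. Without that kind of local-to-global lemma, the ``induction on excess'' is a placeholder rather than an argument.
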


This subsection is devoted to the proof of Theorem~\ref{thm:moving-between-decorations}, which plays a major role in the proof of our main result in this paper. The following two lemmas are the major tools needed in the proof.

\begin{lem}\label{lem:changing-deperturbed-cell-decomps}
Every two deperturbed cell decompositions $\Cell$ and $\Cell'$ of a closed surface $\oS$ with 
\[
V^+(\Cell)=V^+(\Cell')=V^+\quad\text{and}\quad V^-(\Cell)=V^-(\Cell')=V^-
\] 
may be changed to one another by a sequence of edge switches.	
\end{lem}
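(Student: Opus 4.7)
The plan is to prove the lemma by induction on a geometric complexity measuring the disagreement between $\Cell$ and $\Cell'$. The key preliminary observation is that an edge switch preserves the deperturbed property: starting from the single face $P$ of $\Cell$, removing an edge $e$ yields the intermediate surface $\oS\setminus(G\setminus e)$ of Euler characteristic zero, obtained from the disk $P$ by identifying the two boundary arcs labelled $e$ into an annulus or M\"obius band; the condition that $e'$ separate the two copies of $e$ on $\partial P$ forces $e'$ to connect the two boundary components of this intermediate surface, cutting it back into a disk. Edge switches also preserve the total number of vertices and edges, and $\Cell,\Cell'$ automatically have the same edge count $E=|V^+|+|V^-|-\chi(\oS)+1$ by Euler characteristic.

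Next, I would put the $1$-skeletons $G$ of $\Cell$ and $G'$ of $\Cell'$ in general position via an ambient isotopy fixing $V^+\cup V^-$, so that edges of $G$ meet edges of $G'$ transversely off the common vertex set. Let $n=n(\Cell,\Cell')$ be the resulting number of transverse intersection points, minimised within the isotopy classes. If $n=0$, then each edge of $G'$ either coincides with an edge of $G$ or lies as a chord in the face $P$; by comparing $\chi(\oS\setminus G')=1$ with the Euler characteristic of the subdivision of $P$ by the chords of $G'$, together with the requirement that the dual graph of this subdivision (whose edges record the identifications from the non-shared edges of $G$) be a tree, one sees that $G'=G$, and hence $\Cell=\Cell'$.

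If $n>0$, I would apply an innermost-bigon argument on the configuration $G\cup G'\subset\oS$: a standard surface-topology argument produces a disk $B\subset\oS$ whose boundary is the union of a subarc of an edge $e\in E(\Cell)$ and a subarc of an edge of $\Cell'$, with the interior of $B$ disjoint from $G\cup G'$. Performing the edge switch that removes $e$ from $\Cell$ and adds the edge $\tilde e$ obtained by isotoping $e$ across $B$ yields a new deperturbed decomposition $\Cell_1$ with $n(\Cell_1,\Cell')<n$, which completes the induction.

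The main obstacle is verifying that the push-across-$B$ operation is always a valid edge switch in the sense of the paper: the color condition is automatic since $\tilde e$ inherits the endpoints of $e$, but checking that $\tilde e$ separates the two copies of $e$ on $\partial P$ requires a careful local analysis near $B$. The innermost choice of $B$ ensures that $\tilde e$ crosses no edges of $G'$ and lies in the correct chord class within $P$, so it separates the pair of boundary arcs labelled $e$ as required. A secondary delicate point is the base case: ruling out the possibility of ``exchanges'' where some edges of $G$ are replaced by disjoint chords in $P$ without changing $n$; this is precisely where the tree condition on the dual graph forces all $E-k$ non-shared edges of $G$ to be absent, i.e., $k=E$ and $G=G'$.
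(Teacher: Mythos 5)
The base case of your induction is false. On $\oS = S^2$ with $V^+ = \{b_1,b_2\}$ and $V^- = \{w_1,w_2\}$ (so $|E|=3$ and $P$ is a hexagon), take $G=\{a,b,c\}$ with $a: b_1\to w_1$, $b: b_1\to w_2$, $c: b_2\to w_1$, and $G'=\{a,c,d\}$ with $d: b_2\to w_2$. Both graphs are trees, so both complements are open disks, and both decompositions are deperturbed with the same vertex sets. The systems $G$ and $G'$ meet only at shared vertices, so $n(\Cell,\Cell')=0$, yet $\Cell\neq\Cell'$ even up to isotopy (and indeed they are related by exactly one edge switch, replacing $b$ by $d$). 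Your dual-graph criterion does not exclude this: the dual graph of the subdivision of $P$ by the single chord $d$ is two nodes connected by the single edge $b$, which is a tree. So the assertion that $n=0$ forces $G=G'$ is incorrect, and the base case in fact still contains the entire content of the lemma rather than following from an Euler-characteristic count.

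The inductive step does not progress either. Pushing $e$ across an innermost bigon $B$ whose interior is disjoint from $G\cup G'$ is an ambient isotopy of $e$ rel endpoints supported off $G\setminus e$; hence the new decomposition $\Cell_1$ is isotopic to $\Cell$. Since you defined $n$ as the crossing number minimised over isotopy classes, it is an isotopy invariant and $n(\Cell_1,\Cell')=n(\Cell,\Cell')$, so the induction stalls. (If instead one tracks the actual crossing count, the bigon moves merely reduce it to the isotopy minimum $n_{\min}$, a topological invariant of the pair of systems which is positive in general, e.g.\ a meridian and a longitude of a torus cannot be made disjoint, and the argument has no move available once $n=n_{\min}>0$.) What the proposal is missing is a move that genuinely changes the isotopy class of $\Cell$. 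The paper's proof supplies exactly this: rather than inducting on a crossing number, it inducts on the length of the longest common chain of consecutive boundary edges of $P$ and $P'$. For the next edge $e$ of $\Cell'$ after that chain, written inside $P$ as a union of $l+1$ arcs, it constructs an explicit edge switch, replacing the $\Cell$-edge $f_1$ first crossed by $e$ with a carefully chosen diagonal $g_1$ of $P$, which strictly reduces $l$ while preserving the common chain, eventually making $e$ a diagonal of $P$ and extending the chain; this gives a terminating double induction where each step is a bona fide change of isotopy class.
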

\begin{proof}
Let us denote the polygon in $D(\Cell)$ by $P$ and the polygon in $D(\Cell')$ by $P'$. Every edge in $E(\Cell)$ appears twice on the boundary of $P$ and every edge in $E(\Cell')$ appears twice on the boundary of $P'$. $\oS$ is obtained from $P$ by identifying the pairs of edges which correspond to the same element of $E(\Cell)$. Similarly, $\oS$ is also obtained from $P'$ by identifying the pairs of edges which correspond to the same element of $E(\Cell')$.\\

Let $e^1,\ldots,e^k$ denote the longest chain of consecutive edges on the boundary of $P$ which also appear in the same order and consecutively on the boundary of $P'$. The length $k$ of this chain, which is called a {\emph{common chain}} for $\Cell$ and $\Cell'$, may of course be zero. Let $e\in E(\Cell')$ denote the edge which appears right after $e^k$ on the boundary of $P'$.  Since $P$ gives a fundamental domain for $\oS$, we may draw $e$ as a union $e=e_0\amalg e_1\amalg \cdots \amalg e_l$ of $l+1$ arcs on $P$. Correspondingly, there are edges 
\[f_1,\ldots,f_l\in E(\Cell)-\{e^1,\ldots,e^k\},\] 
each corresponding to two edges on the boundary of $P$, with the following properties. The arc $e_0$ starts from the endpoint $v\in V^-$ of $e^k$ on $\partial P$, enters $P$ and leaves it from the first edge corresponding to $f_1$. For $1\leq j< l$, the arc $e_j$ enters $P$ from the second edge on $\partial P$ corresponding to $f_{j-1}$ and leaves $P$  from the first edge  on $\partial P$ corresponding to $f_j$. Finally, the arc $e_l$ enters $P$ from the second edge corresponding  to $f_l$ and reaches a vertex $v'\in V^+$ on the boundary of $P$. This setup is illustrated in Figure~\ref{fig:Edge-Switch}(a), where we assume $l=2$.\\ 	

\begin{figure}
	\def\svgwidth{0.8\textwidth}
	{\footnotesize{
			\begin{center}
				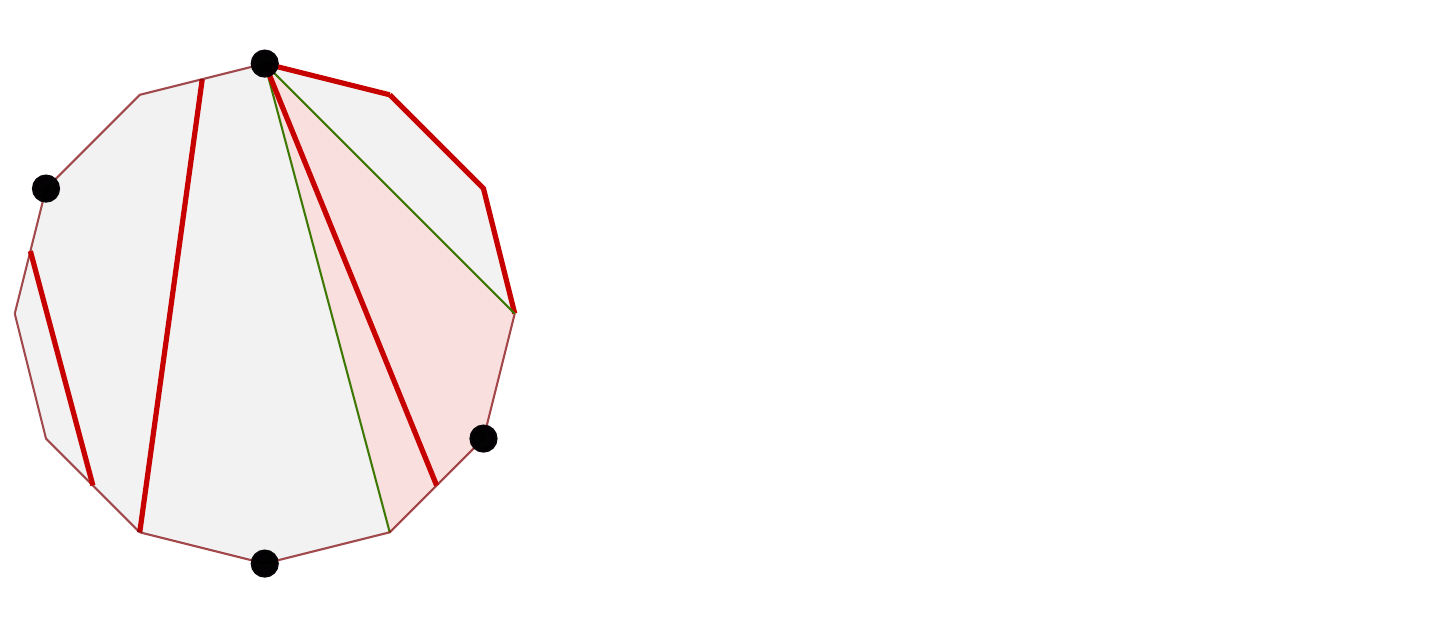
	\end{center}}}
	\caption{Two deperturbed cell decompositions $\Cell$ and $\Cell'$ with the same sets of vertices $V^\pm$ are given, and an edge $e$ in $\Cell'$, which  is a union of $l+1$ arcs $e^0\amalg e^2\amalg\cdots\amalg e^l$  on the fundamental domain $P=\oS-E(\Cell)$ is fixed. By an edge switch, which replaces $g_1$ for $f_1$, we arrive at a cell decomposition, where $e$ is represented by $l$ arcs.}
	\label{fig:Edge-Switch}
\end{figure}  

Two of the diagonals connecting $v$ to vertices $w_1$ and $w_2$ on $\partial P$ (which correspond to one or two vertices in $V^+$) are the closest of such diagonals to the arc $e_0$. The diagonal connecting $v$ to $w_i$ is denoted by $g_i$ for $i=1,2$. Then, $g_1$ and $g_2$ determine a quadrilateral subdomain of $P$, which is shaded pink in Figure~\ref{fig:Edge-Switch}(a). Note that one of the edges on $\partial P$ corresponding to $f_1$ belongs to the boundary of this quadrilateral. We may choose the labeling of $g_1$ and $g_2$ so that $g_1$ separates the two edges on $\partial P$ which correspond to $f_1$. Figure~\ref{fig:Edge-Switch}(b) illustrates the edge switch which replaces $g_1$ for $f_1$ in $\Cell_0=\Cell$ to create a new perturbed cell-decomposition $\Cell_1$. It is not hard to see that in the fundamental domain $P_1$ associated with $\Cell_1$, the edge $e$ is a union of $l$ arcs, and that $e^1,\ldots,e^k$ appear in this order and consecutively on $\partial P_1$. We may thus repeat the above process, to obtain the sequences
\[\Cell=\Cell_0,\Cell_1,\ldots,\Cell_l\quad\text{and}\quad P=P_0,P_1,\ldots,P_l\]
of perturbed cell decompositions and corresponding fundamental domains, so that $\Cell_j$ is obtained from $\Cell_{j-1}$ by an edge-switch,  the edge $e$ is a union of $l-j+1$ arcs in $P_j$, and  $e^1,\ldots,e^k$ appear in this order and consecutively on $\partial P_j$. In particular, $e$ becomes a diagonal in $P_l$ and may be switched with an edge of $P_l$ to create a perturbed cell decomposition $\Cell''$, with a corresponding fundamental domain $P''$, so that $e^1,e^2,\ldots,e^k,e$ appear in this order and consecutively on the boundary of $P''$. In other words,  we may use edge switches to change $\Cell$ to $\Cell''$, so that  the length of the longest common chain for $\Cell''$ and $\Cell'$ is bigger than  the length of the longest common chain for $\Cell$ and $\Cell'$. Repeating this process, it is now clear that $\Cell$ may be changed to $\Cell'$ by edge switches.   
\end{proof}		

The next step for proving Theorem~\ref{thm:moving-between-decorations} is to show that different decompositions of a polygon into quadrilaterals may be changed to one another by edge-switches. The proof of the following lemma (which also inspired the proof of Lemma~\ref{lem:changing-deperturbed-cell-decomps}) is suggested by Fatemeh Eftekhary.

\begin{lem}\label{lem:rectangulations}
Fix a convext polygon $P$ with a set $V^+$ of $m$ blue vertices  and a set $V^-$ of $m$ black vertices  which alternate on the boundary $\partial P$ of $P$. Fix two decompositions of $P$ into quadrilaterals, which are obtained by choosing a collection of $m-2$ diagonals with disjoint interiors, each connecting a black vertex to a blue vertex.  Then the two decompositions may be changed to one another by a sequence of edge switches, switching the diagonals in the interior of $P$. 	
\end{lem}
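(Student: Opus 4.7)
The plan is to induct on $m$. The base cases are $m=2$ (trivially no diagonals) and $m=3$ (the hexagon $P$ admits exactly three decompositions, one per ``long'' diagonal connecting opposite-colored opposite vertices, and any two of these differ by a single edge switch whose union polygon is all of $P$). For the inductive step with $m \geq 4$, first locate an \emph{ear} of $\Cell_1$. The dual graph of any quadrangulation of $P$ has $m-1$ nodes and $m-2$ edges and is connected (any two quadrilaterals are joined by a path crossing interior diagonals), hence it is a tree with at least two leaves. Each leaf corresponds to a quadrilateral with three consecutive sides on $\partial P$, whose four vertices are $v_{i-1},v_i,v_{i+1},v_{i+2}$ and whose fourth side is the diagonal $d=v_{i-1}v_{i+2}$ (valid since $i-1$ and $i+2$ have opposite parities).

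The central sub-step is to edge-switch $\Cell_2$ into a quadrangulation containing $d$, which I would prove by an inner induction on the number $k$ of diagonals of $\Cell_2$ crossing $d$ in the interior of $P$. When $k=0$ I claim $d \in \Cell_2$ already: among $\{v_{i-1},v_i,v_{i+1},v_{i+2}\}$ the only bichromatic pairs are the three consecutive boundary edges of $P$ and $d$ itself (the two ``skew'' pairs are monochromatic), so no diagonal of $\Cell_2$ can lie strictly inside the ear region; moreover any diagonal of $\Cell_2$ incident to $v_i$ or $v_{i+1}$ would have to exit the ear region and therefore cross $d$, contradicting $k=0$. Hence the ear region is a single face of $\Cell_2$, and its closing edge is forced to be $d$. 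When $k \geq 1$, let $e_1$ be the first diagonal of $\Cell_2$ crossed along $d$ starting from $v_{i-1}$, shared by two quadrilaterals $A_0,A_1$ with $v_{i-1}$ a vertex of $A_0$. The hexagon $H = A_0 \cup A_1$ has six alternately colored vertices, and precisely one of its three long diagonals, say $\tilde{e}$, emanates from $v_{i-1}$. Replacing $e_1$ by $\tilde{e}$ is a valid edge switch producing two new quadrilaterals, and since $\tilde{e}$ and $d$ share the endpoint $v_{i-1}$, by convexity they do not cross in the interior, so the number of crossings drops by at least one.

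Once $\Cell_2$ has been transformed to share the ear $Q_1 = v_{i-1}v_iv_{i+1}v_{i+2}$ with $\Cell_1$, both decompositions restrict to quadrangulations of the complementary convex $(2m-2)$-gon $P \setminus Q_1$ with alternately colored vertices. By the outer induction hypothesis these restrictions are related by a sequence of edge switches inside $P \setminus Q_1$, and each such switch lifts to an edge switch in $P$ that leaves the ear $Q_1$ untouched; concatenating with the sub-step sequence gives the required sequence of edge switches from $\Cell_2$ to $\Cell_1$. I expect the main obstacle to be the $k=0$ base of the inner induction, together with verifying that the chosen long diagonal $\tilde{e}$ in $H$ really does avoid $d$; both rest on convexity of the sub-polygons and on the bichromatic restriction on allowed diagonals. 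The outer induction and the existence-of-an-ear argument are then routine once the sub-step is in place.
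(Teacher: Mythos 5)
Your proposal is correct, and the essential mechanism coincides with the paper's: to force a target diagonal into a quadrangulation, you remove the crossing diagonal nearest the target's chosen endpoint, replacing it by the long diagonal of the resulting hexagon emanating from that endpoint, so the crossing count strictly drops. Where you differ is in the global framing. The paper picks an \emph{arbitrary} diagonal $e\in E(\Cell',P)\setminus E(\Cell,P)$, runs this untangling to bring $e$ into $\Cell$ while preserving the already-common diagonals $e^1,\ldots,e^k$, and then iterates, arguing via the monovariant $|E(\Cell,P)\cap E(\Cell',P)|$ until the two sets of diagonals agree; no induction on $m$ is used, and no structural fact about quadrangulations (such as the existence of an ear) is needed. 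You instead induct on $m$, locate an ear of $\Cell_1$ via the dual-tree argument (a leaf of the tree on $m-1$ faces with $m-2$ edges), untangle $\Cell_2$ until it contains the ear diagonal $d=v_{i-1}v_{i+2}$, and recurse on the complementary convex $(2m-2)$-gon. Your route costs the extra ear lemma and the $k=0$ verification that the ear region becomes a single face of $\Cell_2$ (which you handle correctly using the bichromatic constraint), but it yields a canonical choice of target diagonal at each stage and makes the termination argument a clean structural induction; the paper's monovariant version avoids the ear machinery at the price of tracking which diagonals are already common. Both are valid, and the heart of the argument — the hexagon edge-switch toward the near endpoint — is the same.
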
	

\begin{figure}
	\def\svgwidth{0.65\textwidth}
	{\footnotesize{
			\begin{center}
				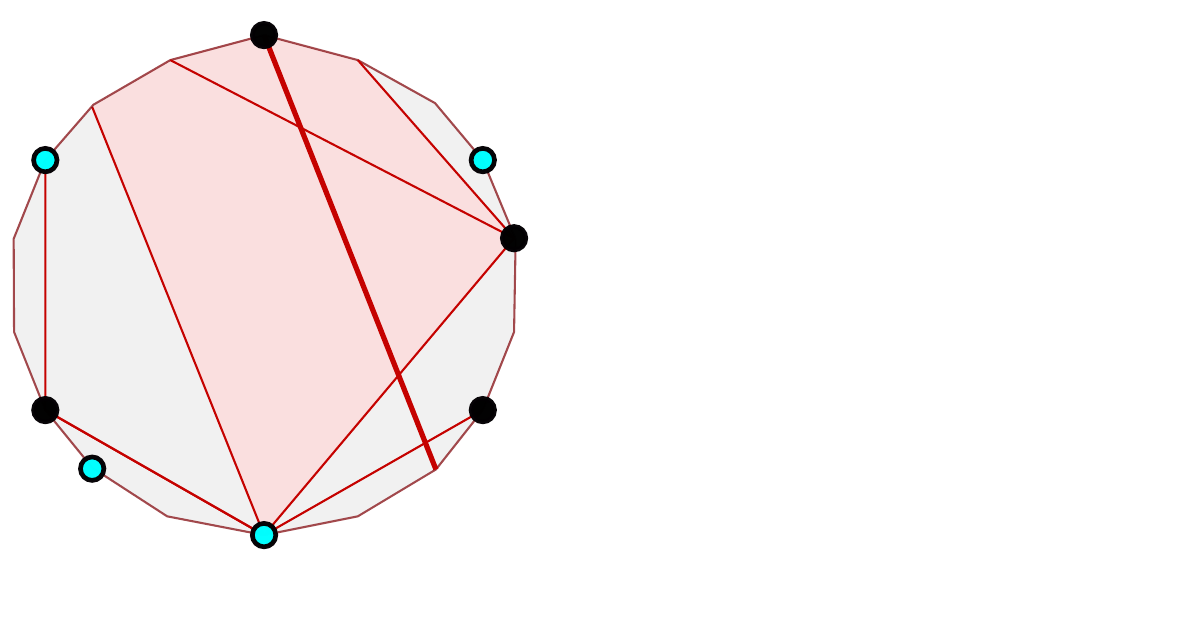
	\end{center}}}
	\caption{Two decompositions of the same polygon into quadrilaterals may be changed to one-another by switching diagonals. To accommodate an edge $e$, remove the diagonals cutting it one by one, and replace them by diagonals not cutting $e$. }
	\label{fig:Rectangulation}
\end{figure}  
\begin{proof}
Let $\Cell$ and $\Cell'$ denote the two  decompositions of $P$ into quadrilaterals which are obtained  by drawing the diagonals $E(\Cell,P)$ and $E(\Cell',P)$ of $P$, respectively. Further assume that $e^1,\ldots,e^k$ are the diagonals in $E(\Cell,P)\cap E(\Cell',P)$. Let $e$ denote a diagonal in $E(\Cell',P)\setminus E(\Cell,P)$, which connects $v\in V^+$ to $v'\in V^-$. It follows that $e$ cuts some diagonals $f_1,f_2,\ldots, f_l$ in $E(\Cell,P)-\{e^1,\ldots,e^k\}$.	Denote the intersection point of $f_j$ with $e$ by $p_j$. We may assume that $p_1,\ldots, p_l$ appear in this order on $e$, with $p_1$ the closest basepoint to $v'$ and $p_l$ the closest point to $v$ (see Figure~\ref{fig:Rectangulation}(a) for an illustration). If $f_1$ is removed from $\Cell$, a hexagon is created inside $P$, which is colored pink in Figure~\ref{fig:Rectangulation}(a). This hexagon includes $v'$ on its boundary. If we replace the diagonal $f_1$ with the diagonal (of the hexagon) having $v'$ as a vertex, we obtain a new decomposition $\Cell_1$ of $P$ into quadrilaterals. Note that 
\[e^1,\ldots, e^k\in E(\Cell_1,P)\quad\text{and}\quad \Big|e\cap \bigcup_{f\in E(\Cell_1,P)}f^\circ\Big|=l-1.\]
Repeating this process for $\Cell_1$ (instead of $\Cell_0=\Cell$), we obtain the decompositions $\Cell=\Cell_0,\Cell_1,\ldots,\Cell_l$ of $P$ into quadrilaterals, where $\Cell_j$ is obtained from $\Cell_{j-1}$ by an edge-switch, while
\[e^1,\ldots, e^k\in E(\Cell_j,P)\quad\text{and}\quad \Big|e\cap \bigcup_{f\in E(\Cell_j,P)}f^\circ\Big|=l-j,\quad\forall\ 0\leq j\leq l.\]
In particular, it follows that $e\in E(\Cell_l,P)$. Therefore, we may increase the number of common diagonals between $\Cell$ and $\Cell'$ by applying a sequence of edge switches to $\Cell$. Repeating the above process, we may eventually change $\Cell$ to $\Cell'$ by edge switches.
\end{proof}

\noindent{\sf\bfseries\color{black!50!blue} Proof of Theorem~\ref{thm:moving-between-decorations}.} 
Let $\SCob$ and $\SCob'$ be simple decorations of the same underlying surface $\Surface$,  which give the cell decompositions $\Cell$ and $\Cell'$ of  the corresponding closed surface $\oS$. If $\Cell$ and $\Cell'$ are isotopic, then $\SCob$ and $\SCob'$ may be changed to one another by isotopies and twists along the boundary components. Therefore, in the general case (where $\Cell$ and $\Cell'$ are not isotopic) it suffices to change $\Cell$ to $\Cell'$ using perturbations and deperturbations (or by edge switches, for the second cliam of the theorem).\\
 
We may deperturb $\Cell$ and $\Cell'$ to unperturbed cell decompositions $\Cell_0$ and $\Cell'_0$. Therefore, Lemma~\ref{lem:changing-deperturbed-cell-decomps} implies that $\Cell$ may be changed to $\Cell'$ by a sequence of perturbations and deperturbations. This already implies the first claim.  If we further assume that $|E(\Cell)|=|E(\Cell')|$, it also follows that the number of perturbations and deperturbations are equal. The proof of the theorem is thus reduced to showing that {\emph{in every simple decoration, a sequence of $k$ deperturbations followed by a perturbation, may be replaced by a sequence of edge-switches followed by $k-1$ deperturbations.}}\\

In order to show the above claim (and complete the proof), note that applying $k$ perturbation creates some $r$ polygons $P_1,\ldots, P_r$ with $2m_1,2m_2,\ldots,2m_r$ edges (respectively), where the initial cell decomposition includes at least one diameter from each $P_j$. The perturbation adds a diagonal $e$ to one of these polygons, say $P_1$. Let us complete the initial decomposition $\Cell|_{P_1}$ of $P_1$ (induced by $\Cell$) to a complete decomposition $\Cell_1$ of $P_1$ into quadrilaterals. We may also apply more perturbations to arrive at a decomposition $\Cell'_1$ of $P_1$ into quadrilaterals which includes $e$ as an edge. Lemma~\ref{lem:rectangulations} implies that the  decompositions $\Cell_1$ and $\Cell'_1$ of $P_1$  may be changed to one another by edge-switches. Ignoring the edges in $\Cell_1-\Cell |_{P_1}$, we obtain a sequence of edge switches which change $\Cell$ to a cell decomposition $\Cell''$, so that $\Cell$ and $\Cell''$ are identical outside $P_1$, while $\Cell''$ includes $e$ and has the same number of edges as $\Cell$.    
Therefore, the whole sequence of  $k$ deperturbations and a single perturbation may be replaced by the aforementioned edge-switches, followed by  $k-1$ deperturbations which remove all diagonals from $P_1,\ldots, P_r$, except the diagonal $e$.    	
\qed\\

The following theorem follows from Remark~\ref{rmk:equivalence} and Theorem~\ref{thm:moving-between-decorations}. It generalizes the result of  Hughes, Kim and Miller  for closed knotted surfaces, corresponding to the case where $L_j$ is an unlink in $Y_j=\#^{k_j}S^2\times S^1$ for $j=1,2$ (see \cite[Theorem 5.8]{HKM-uniqueness}, also see \cite[Theorem A]{GM-1} for the special case of doubly pointed trisection diagrams).

\begin{thm}\label{thm:uniqueness-of-trisection}
For every link cobordism $(X,\Surface):(Y_2,L_2)\ra (Y_1,L_1)$ and every decoration of it which is denoted by $(X,\SCob):(Y_2,\Link_2)\ra (Y_1,\Link_1)$,   there is a compatible nice trisection diagram. The nice diagrams $H$ and $H'$ are compatible with the same decorated cobordism if they may be changed to one another by a sequence of slide-diffeomorphisms stabilizations and destabilizations. If 
\begin{align*}
(X,\SCob):(Y_2,\Link_2)\ra (Y_1,\Link_1)\quad\text{and}\quad
(X,\SCob'):(Y_2,\Link_2')\ra (Y_1,\Link_1')
\end{align*}
are decorations of the same link cobordism $(X,\Surface)$, there are twist cobordisms
\[(X_j=Y_j\times [0,1],\SCob_j):(Y_j,\Link_j)\ra (Y_j,\Link_j)\quad j=1,2,\] 
such that the nice trisection diagrams compatible with  $(X_1,\SCob_1)\circ(X,\SCob)\circ (X_2,\SCob_2)$ and $(X,\SCob')$ may be changed to one another by a sequence of slide-diffeomorphisms, stabilizations, destabilizations, perturbations and deperturbations.
\end{thm}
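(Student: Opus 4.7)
The three claims will be established in order, with the third being the essential new content. For existence, observe that every link cobordism $(X,\Surface)$ admits at least one decoration: choose a finite collection of properly embedded arcs on $\Surface$ connecting $L_2$ to $L_1$ that meets every component of $L_1\cup L_2$, which determines markings $\ppoint_j\subset L_j$ and a dividing configuration. Given such a decoration $(X,\SCob)$, the existence of a compatible nice trisection diagram is the content of the construction from \cite{AE-2} recalled in Subsection~\ref{subsec:trisection-diagrams}. The second claim is the statement of Remark~\ref{rmk:equivalence}.

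For the third claim, start with two decorations $(X,\SCob)$ and $(X,\SCob')$ of the same link cobordism $(X,\Surface)$. The first step is to apply Theorem~\ref{thm:moving-between-decorations} to the abstract decorated surface underlying $\Surface$, producing a finite sequence of perturbations, deperturbations, and twists along the boundary components that transforms $\SCob$ into $\SCob'$. Since boundary twists are supported in tubular neighborhoods of $L_j\subset Y_j$, they extend to ambient diffeomorphisms of $Y_j$ whose mapping cylinders provide twist cobordisms
\[
(X_j,\SCob_j):(Y_j,\Link_j)\ra (Y_j,\Link_j),\qquad X_j=Y_j\times[0,1],\quad j=1,2.
\]
After pre- and post-composing by these twist cobordisms on the appropriate sides, we reduce to the case where $\SCob$ is changed to $\SCob'$ by a sequence of perturbations and deperturbations alone.

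The remaining task is to lift each abstract (de)perturbation of the decoration to the corresponding diagram-level move of Subsection~\ref{subsec:trisection-diagrams}. Recall that every nice trisection diagram $H$ determines a triangulation $\Tria_H$ (and cell decomposition $\Cell_H$) of $\bSurface_H$, and that the perturbation move on $H$ implements precisely the triangulation modification of Figure~\ref{fig:triangulation}. Choose a nice trisection diagram $H$ compatible with $(X,\SCob)$ using claim (1), and inductively apply the matching perturbation or deperturbation of $H$ at each step of the decoration-level sequence, arriving at a nice trisection diagram $H'$ compatible with $(X_1,\SCob_1)\circ(X,\SCob)\circ(X_2,\SCob_2)$. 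Any other nice trisection diagram compatible with $(X,\SCob')$ then differs from $H'$ by the slide-diffeomorphisms and (de)stabilizations furnished by claim (2), yielding the full list of moves appearing in the theorem.

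The main technical obstacle will be to verify that a single abstract perturbation of $\Cell_H$ can be realized strictly by the diagram-level perturbation move, without being forced to insert extra stabilizations or slides. Concretely, one must exhibit the new pair of adjacent triangles in $\Tria_H$ as coming from a local modification of the $\alpha$-curve systems on $\HSurf$ that leaves the remaining combinatorial structure untouched; a local model for this can be built inside a small disk neighborhood of the red vertex being split, but the argument should be spelled out to confirm that the resulting diagram remains nice and that its associated cell decomposition is the perturbed one. Once this local realization is in place, the inductive lifting above goes through and the theorem follows.
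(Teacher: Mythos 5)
Your argument follows the paper's route exactly: claims (1) and (2) are Remark~\ref{rmk:equivalence}, and claim (3) is obtained by applying Theorem~\ref{thm:moving-between-decorations} to the abstract decorated surface, absorbing the boundary twists into the twist cobordisms $(X_j,\SCob_j)$ and lifting the remaining perturbations and deperturbations to the diagram level before invoking claim (2) once more. Your closing concern about realizing an abstract perturbation ``strictly by the diagram-level perturbation move, without being forced to insert extra stabilizations or slides'' is unnecessary: the paper defines a perturbation of trisection diagrams at the level of equivalence classes --- $[H']$ is a perturbation of $[H]$ precisely when $H'$ is compatible with the perturbed decorated surface --- so the existence claim already furnishes such an $H'$, and in any case stabilizations and slide-diffeomorphisms are explicitly among the moves allowed in the conclusion of the theorem.
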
	

\section{The link concordance TQFT}\label{sec:concordance-tqft}
\subsection{The basepoint action and the homology action on Heegaard-Floer groups}\label{subsec:homology-action}
Assume that $Y$ is a closed $3$-manifold and $\Link$ is a pointed link consisting of a null-homologous link $L\subset Y$ with $c$ components, which form the set  $C(L)=\{L^1,\ldots,L^c\}$ of components, and the collections $\wpoint,\zpoint\subset L$  of basepoints so that $\zpoint^k=\zpoint\cap L^k$ and  $\wpoint^k=\wpoint\cap L^k$ are non-empty and alternate on $L^k$ for $k=1,\ldots,c$.
Given a $\SpinC$ structure $\spinc\in\SpinC(Y)$, let $H=(\HSurf,\alphas_1,\alphas_2,\wpoint,\zpoint)$ denote a strongly $\spinc$-admissible Heegaard diagram for the pointed link $(Y,\Link)$, and set $n=|\alpha_1|=|\alpha_2|$. If $\mathbb{T}_{\alpha_1}$ and $\mathbb{T}_{\alpha_2}$ denote the tori in $\Sym^{n}\HSurf$ associated with $\alphas_1$ and $\alphas_2$ respectively, we obtain natural maps 
\begin{align*}
\spinc_\wpoint,\spinc_\zpoint:	\mathbb{T}_{\alpha_1}\cap\mathbb{T}_{\alpha_2}\ra \SpinC(Y),
\end{align*}	
so that $\spinc_\wpoint-\spinc_\zpoint=\PD[L]$. In particular, if the pointed link $\Link$ is homologically trivial, $\spinc_\zpoint=\spinc_\wpoint$.\\

 Let $(C_\spinc,d_\spinc)=(\CFLm_{H,\spinc},d^-_{H,\spinc})$ denote the Heegaard-Floer chain complex associated with  the Heegaard diagram $H$ and the $\SpinC$ structure $\spinc\in\SpinC(Y)$, which is freely generated by the set  $\Sbf(H,\spinc) \subset\mathbb{T}_{\alpha_1}\cap\mathbb{T}_{\alpha_2}$ of generators representing the $\SpinC$ structure $\spinc$,  over the polynomial ring $\Ringm $. To define the differential $d_\spinc:C_\spinc\ra C_\spinc$, for every $\x,\y\in \Sbf(H,\spinc)$ let $\pi_2^j(\x,\y)$ denote the set of homotopy classes of Whitney disks connecting $\x$ to $\y$ which are of Maslov index $j$. Let
 \begin{align*}
 n_\wpoint(\phi)=\sum_{w\in\wpoint}n_w(\phi)\quad\text{and}\quad	
 n_\zpoint(\phi)=\sum_{z\in\zpoint}n_z(\phi)\quad(\text{for}\ \phi\in\pi_2(\x,\y)=\amalg_j\pi_2^j(\x,\y)),
 \end{align*}	
denote the coefficients of the domain $\Dcal(\phi)$ of $\phi$  at the basepoints in $\wpoint$ and $\zpoint$. For $\x\in\Sbf(H,\spinc)$ define
\[d_\spinc(\x):=\sum_{\y\in\Sbf(H,\spinc)}
\ \ \sum_{\substack{\phi\in\pi^1_2(\x,\y)}}\asf(\phi)\cdot \y,\quad\text{where}\quad
\asf(\phi):=\#\big(\Mcal(\phi)/\R\big)\cdot\wvar^{n_\wpoint(\phi)}\cdot\zvar^{n_\zpoint(\phi)}\in\Ringm,\]
where $\Mcal(\phi)$ denotes the moduli space of $J$-holomorphic representatives of $\phi$ for a generic path of almost complex structures $J$. If the reference to the Heegaard diagram $H$ is needed, we usually denote $(C_\spinc,d_\spinc)$ by $(C^H_\spinc,d^H_\spinc)$. The latter chain complex is  $\Z\oplus\Z$-filtered and its chain homotopy type is an invariant of  $(Y,\Link,\spinc)$. In fact, associated with  $(Y,\Link,\spinc)$, we obtain a transitive system of chain complexes $\CFLm(Y,\Link,\spinc)$ which is indexed by the strongly $\spinc$-admissible Heegaard diagrams $H$ representing $(Y,\Link)$. In particular, the   homology group 
\begin{align*}
\HFLm(Y,\Link,\spinc)=H_*(\CFLm(Y,\Link,\spinc))
\end{align*}
 is defined as a concrete $\Ringm$-module, and is an invariant of the triple $(Y,\Link,\spinc)$. The basepoints in $\wpoint$ and $\zpoint$ may be used to define the {\emph{relative homological gradings }}  $\grw$ and $\grz$, which are well-defined modulo the evaluation of $2c_1(\spinc)$ on second homology of $Y$. In particular, if $\spinc$ is a torsion $\SpinC$ structure, these relative homological gradings take their values in $\Z$. The homological bi-gradings of the variables $\wvar$ and $\zvar$ are defined equal to $(-2,0)$ and $(0,-2)$, respectively. The differential $d_\spinc:C_\spinc\ra C_\spinc$   drops the homological bi-grading by $(1,1)$.  
Set $\Abb^-=\Ringm$ and equip $\Abb^\circ=\Ringh$ and $\Ahat=\Ringhh$ with the structure of $\Ringm$-modules by identifying them as $\Ringm/\langle\wvar\rangle$ and $\Ringm/\langle\wvar,\zvar\rangle$, respectively. Set
\begin{align*}
&\CFLh(Y,\Link,\spinc):=\CFLm(Y,\Link,\spinc)\otimes_\Ringm\Abb^\circ,&&&&
\HFLh(Y,\Link,\spinc):=H_*(\CFLh(Y,\Link,\spinc)),\\
&\CFLhh(Y,\Link,\spinc):=\CFLm(Y,\Link,\spinc)\otimes_\Ringm\Ahat,&&&&
\HFLhh(Y,\Link,\spinc):=H_*(\CFLhh(Y,\Link,\spinc)),\\
&\CFLs(Y,\Link):=\bigoplus_{\spinc\in\SpinC(Y)}\CFLs(Y,\Link,\spinc)&&\text{and}&&
\HFLs(Y,\Link):=\bigoplus_{\spinc\in\SpinC(Y)}\HFLs(Y,\Link,\spinc).
\end{align*}	
Although different diagrams may be needed to form the complexes  in the fifth definition above, we  abuse the notation and set $(C,d)=\bigoplus_{\spinc\in\SpinC(Y)}(C_\spinc,d_\spinc)$ without referring to the corresponding Heegaard diagram. For a discussion of the naturality of these invariants, see  \cite{AE-2} or \cite{Zemke-1}. \\

The basepoints  $w\in\wpoint$ and  $z\in\zpoint$ determine the  basepoint action maps $\Phi_w:C\ra C$ and $\Psi_z:C\ra C$, which are defined on $C_\spinc$ by 
\begin{align*}
	&\Phi_w(\x)=\sum_{\substack{\y\in\Sbf(H,\spinc)}}\sum_{\substack{\phi\in\pi^1_2(\x,\y)}}
	\frac{n_w(\phi)\cdot\asf(\phi)}{\wvar} \cdot\y 	\quad\text{and}\quad
	\Psi_z(\x)=\sum_{\substack{\y\in\Sbf(H,\spinc)}}\sum_{\substack{ \phi\in\pi^1_2(\x,\y)}}
	\frac{n_z(\phi)\cdot\asf(\phi)}{\zvar} \cdot\y ,
\end{align*}	
respectively. We drop the $\SpinC$ structure from the notation when there is no confusion. These chain maps are homogenous of homological bi-degrees $(1,-1)$ and $(-1,1)$, respectively and induce the well-defined $\Ringm$-homomorphisms 
\begin{align*}
\Phi_w^\star,\Psi_z^\star:\HFLs(Y,\Link)\ra\HFLs(Y,\Link),
\quad\text{for}\ \ \star\in\{-,\circ,\wedge\}.
\end{align*}
 The latter maps are independent of the choice of the pointed Heegaard diagram for  $(Y,\Link,\spinc)$ and respect the decomposition by $\SpinC$ structures in $\SpinC(Y)$.\\

 There is also an action of $\mywedge_{Y}=\mywedge^*H_1(Y;\Fbb)$ on $\HFLm(Y,\Link,\spinc)$, which is defined as follows (see \cite[Proposition 4.17]{OS-3m1}). Let $\gamma$ denote an  immersed closed $1$-manifold in $\HSurf-\wpoint-\zpoint$ representing the homology class $[\gamma]\in H_1(Y;\Fbb)$, which is in general position with respect to $\alphas_1$ and $\alphas_2$.  Fix the points $v^j=(j,0)$ for $j=0,1$  on the boundary of $[0,1]\times\R$.  Define the chain maps $A^j_\gamma:C\ra C$  by
 \begin{displaymath}
 	\begin{split}
 		&A^j_{\gamma}(\x):=\sum_{\y\in\Sbf(H,\spinc)}\sum_{\phi\in\pi^1_2(\x,\y)}
 		\mathsf{a}^j(\gamma,\phi)\cdot\wvar^{n_\wpoint(\phi)}\cdot\zvar^{n_\zpoint(\phi)}\cdot\y,\ \ \forall\ \x\in\Sbf(H,\spinc),\ \ \text{where}\\
 		&\mathsf{a}^j(\gamma,\phi)=\#\Mcal^j(\gamma,\phi)=\#\left\{(u,x)\   \big|\  u\in\Mcal(\phi), x\in\mathrm{Domain}(u),  u(x)\in\{v^j\}\times(\gamma\cap\alphas_{j})\right\}.
 	\end{split}
 \end{displaymath}	
 A restatement of \cite[Proposition 4.7]{OS-3m1} may then be formulated as the following lemma.
 \begin{lem}\label{lem:homology-action}
 With the above notation in place,	
 	$A^j_\gamma:C\ra C$ is a chain map and the induced map
 \begin{align*}	
 	\HAction^j_\gamma:\HFLs(Y,\Link,\spinc)\ra \HFLs(Y,\Link,\spinc)
\end{align*}
 	 only depends on  the homology class $[\gamma]\in H_1(Y;\Fbb)$, for $\star\in\{-,\circ,\wedge\}$. Moreover,  $\HAction_\gamma^{1}=\HAction_\gamma^{2}$, and we may thus denote $\HAction_\gamma^{j}$ by $\HAction^\star_\gamma$, which satisfies $\HAction^\star_{\gamma}\circ\HAction^\star_{\gamma}=0$. In particular, the homology action extends to an action of the exterior algebra  $\mywedge_{Y}$  on $\HFLs(Y,\Link,\spinc)$, which  commutes with the basepoint actions of $\wedgews$ and $\wedgezs$ on $\HFLs(Y,\Link,\spinc)$. 
 \end{lem}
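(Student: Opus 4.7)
The plan is to adapt the standard moduli-space argument of \cite{OS-3m1} (Propositions 4.17 and 4.7) to the present link setting, where the additional $\zvar$-variables and the basepoints $\zpoint$ play no essential role since $\gamma$ is required to avoid $\wpoint\cup\zpoint$. In what follows I will denote by $\Mcal^j(\gamma,\phi)$ the moduli space of pairs $(u,x)$ as in the definition of $A^j_\gamma$, where $u\in\Mcal(\phi)$ and $x\in\mathrm{Domain}(u)$ maps to $\{v^j\}\times(\gamma\cap\alphas_j)$. For a class $\phi\in\pi^2_2(\x,\y)$ (Maslov index $2$), the moduli space $\Mcal^j(\gamma,\phi)/\R$ is a $1$-manifold whose ends I will enumerate.

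For the chain map property $d\circ A^j_\gamma + A^j_\gamma\circ d=0$, I would classify the ends of $\Mcal^j(\gamma,\phi)/\R$ for $\mu(\phi)=2$. There are two kinds of ends: \emph{strip breakings} at the source (a holomorphic representative of $\phi$ degenerating into two index-$1$ disks $\phi_1\ast\phi_2$), which contribute exactly $(dA^j_\gamma+A^j_\gamma d)(\x)$, and ends where the marked point $x$ approaches the boundary of the domain. The latter can only limit to an $\alpha_j$-boundary arc (since $\gamma\cap\alphas_{3-j}=\emptyset$ under the placement constraint), and a standard argument shows these occur in cancelling pairs: two consecutive intersection points of $\gamma$ with the same $\alpha_j$-circle bound an arc on $\alpha_j$, and bubbling off this boundary arc contributes the same count to a pair of index-$1$ classes related by reflection, so they cancel mod $2$. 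Gromov compactness rules out disk/sphere bubbles by the usual genericity argument. Hence $A^j_\gamma$ is a chain map.

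For independence of the induced map $\HAction^j_\gamma$ from the representative $\gamma$, I would separately verify invariance under isotopy of $\gamma$ on $\HSurf-\wpoint-\zpoint$ (an easy transversality argument producing a chain homotopy), and invariance under replacing $\gamma$ by $\gamma+\partial\Sigma$ for a $2$-chain $\Sigma\subset Y$. Decomposing $\Sigma$ into pieces supported in the $\alpha$- and $\beta$-handlebodies, the second move reduces to showing that if $\gamma$ bounds in the handlebody attached along $\alphas_j$, then $A^j_\gamma$ is null-chain-homotopic; this follows because such a $\gamma$ can be isotoped off the Heegaard surface into the handlebody, and then counts of holomorphic disks hitting $\gamma\cap\alphas_j$ can be realized as boundary contributions of a chain homotopy built from the bounding disk. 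The identity $\HAction^1_\gamma=\HAction^2_\gamma$ then follows because a curve $\gamma\subset\HSurf-\wpoint-\zpoint$ represents the same class in $H_1(Y;\Fbb)$ regardless of which handlebody side we push it into, and the difference $A^1_\gamma-A^2_\gamma$ is realized by a chain homotopy coming from moving the marked point $v^j$ across $[0,1]\times\R$.

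For $\HAction^\star_\gamma\circ\HAction^\star_\gamma=0$, I would analyze the moduli spaces underlying the composition $A^j_\gamma\circ A^j_\gamma$: these count pairs of marked points $(x_1,x_2)$ on the domain of $u$, both mapping to $\{v^j\}\times(\gamma\cap\alphas_j)$. Let $B^j_\gamma$ denote the chain map defined by unordered pairs with both marked points on the \emph{same} index-$1$ disk passing twice through $\gamma\cap\alphas_j$; boundary analysis of ordered configurations of marked points on index-$2$ disks shows $A^j_\gamma\circ A^j_\gamma$ equals $dB+Bd$ (up to sign in $\Fbb$, hence equal mod $2$), because the ordered configurations naturally come in pairs differing by swapping $x_1\leftrightarrow x_2$. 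This gives $\HAction^\star_\gamma\circ\HAction^\star_\gamma=0$, so the $A$-action extends across the exterior algebra $\mywedge_Y$. Finally, commutation with $\Phi^\star_w$ and $\Psi^\star_z$ is an analogous end-of-moduli-space argument: consider index-$2$ disks with one marked point on $\{v^j\}\times(\gamma\cap\alphas_j)$ and one marked point constrained at $w$ (respectively $z$); the strip-breaking ends produce $\Phi_w\circ A^j_\gamma+A^j_\gamma\circ\Phi_w$ (respectively for $\Psi_z$), and the boundary ends cancel as before because $\gamma$ avoids $\wpoint\cup\zpoint$. I expect the main subtlety to be the careful pairing of boundary contributions when the marked point $x$ exits through $\alpha_j$, which requires controlling the orientation data; over $\Fbb$ this simplifies significantly, but the bookkeeping for the squaring statement $\HAction^2=0$ is where most care is needed.
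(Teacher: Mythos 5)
Your proposal is correct and follows essentially the same route as the paper: the paper simply cites \cite[Proposition~4.7]{OS-3m1} for the chain-map property, independence of the representative $\gamma$, the equality $\HAction^1_\gamma=\HAction^2_\gamma$, and the squaring relation, and only proves the commutation with the basepoint actions, for which it writes down the explicit chain homotopy $H^j_{w,\gamma}(\x)=\sum_{\y}\sum_{\phi\in\pi_2^1(\x,\y)} n_w(\phi)\,\mathsf{a}^j(\gamma,\phi)\,\wvar^{n_\wpoint(\phi)-1}\zvar^{n_\zpoint(\phi)}\y$ and asserts $[A^j_\gamma,\Phi_w]=[d,H^j_{w,\gamma}]$ by the standard degeneration argument. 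Your end-of-moduli-space description of the last step (counting index-$2$ disks with one marked point on $\gamma\cap\alphas_j$ and one realizing the $n_w$-weighting) is precisely the degeneration analysis that produces this chain homotopy, so the two proofs coincide modulo how much of the Ozsv\'ath--Szab\'o argument is re-derived versus cited.
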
	
 \begin{proof}
 	Only the last claim requires a new proof (the rest of the statements follow from the argument of  \cite[Proposition 4.7]{OS-3m1}).  With the above setup in place, let us fix the homology class $\gamma$ and $w\in\wpoint$ and define the $\Abb^-$-homomorphisms $H^j_{w,\gamma}:C\ra C$ by
 	\begin{align*}
 		H^j_{w,\gamma}(\x):=\sum_{\y\in\Sbf(H,\spinc)}\sum_{\phi\in\pi_2^1(\x,\y)}	n_w(\phi)\mathsf{a}^j(\gamma,\phi) \cdot \wvar^{n_\wpoint(\phi)-1}\cdot\zvar^{n_\zpoint(\phi)}\cdot\y,\quad
 		\forall\ \x\in\Sbf(H,\spinc). 
 	\end{align*}	
 	It then follows from standard degeneration arguments that
 	\begin{align*}
 		A^j_\gamma\circ \Phi_w-\Phi_w\circ A^j_\gamma=H^j_{w,\gamma}\circ d-d\circ H^j_{w,\gamma},	
 	\end{align*}	
 	completing the proof of the last claim.
 \end{proof}	
 
\subsection{The link Floer homology groups}\label{subsec:link-Floer-homology} 
 Let us fix a pointed link $(Y,\Link)$ with the underlying link $(Y,L)$ as before.  
   For $\star=-,\circ,\wedge$, set
 \[\Phi_L^\star:=\sum_{K\in C(L)}\Phi^\star_K\ \ \text{and}\ \ \Psi^\star_L:=\sum_{K\in C(L)}\Psi^\star_K\quad
 \text{where}\quad 
 \Phi^\star_K:=\sum_{w\in \wpoint\cap K}\Phi^\star_w\ \ \text{and} \ \  
\Psi^\star_K:=\sum_{z\in \zpoint\cap K}\Psi^\star_z
\]
for every component $K\in C(L)$.
The following proposition gathers most of the properties we need about the basepoint action maps:
\begin{prop}\label{prop:point-action-independence}
	Let $\wpoint$ and  $\zpoint$ denote the two collections of basepoints on the pointed link $(Y,\Link)$ with the underlying link $(Y,L)$. 
	Then, for every  $w,w'\in\wpoint$, $z,z'\in\zpoint$ and $\star\in\{-,\circ\}$ we have: 
	\begin{equation}\label{eq:commutation-properties}
		\begin{split}
			&\Phi^\star_w\circ\Phi^\star_w=0,\quad\Psi^\star_z\circ\Psi^\star_z=0,\quad [\Phi^\star_w,\Phi^\star_{w'}]=0,\quad [\Psi^\star_z,\Psi^\star_{z'}]=0\quad\text{and}\quad
			[\Phi^\star_w,\Psi^\star_z]=\delta_{w,z} \cdot Id.
		\end{split}
	\end{equation} 
	Here  $[\gmap,\gmap']=\gmap\circ\gmap'-\gmap'\circ\gmap$ 
	for $\gmap,\gmap':\HFLs(Y,\Link)\ra \HFLs(Y,\Link)$, and  
	$\delta_{w,z}=0$ unless $z$ and $w$ are neighbors on a component $K$ of $L$ with $|K\cap\wpoint|=|K\cap\zpoint|>1$, where we set $\delta_{w,z}=1$. 
	In particular, $\HFLs(Y,\Link)$  admits actions  by the exterior algebras 
	$\wedgews$ and $\wedgezs$ generated by $\{\Phi^\star_w\}_{w\in\wpoint}$ and $\{\Psi^\star_z\}_{z\in\zpoint}$ over $\Abb^\star$, respectively. Moreover, the action of $\Phi_L^-$ is trivial, i.e. zero.
\end{prop}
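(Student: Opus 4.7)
The plan is to establish each of the commutation relations in~\eqref{eq:commutation-properties} by constructing an explicit $\Ringm$-linear chain homotopy on $\CFLs(Y,\Link,\spinc)$, and to separately prove the vanishing of $\Phi_L^-$ by exhibiting it as the commutator of the differential with a formal $\wvar$-derivative. Since all operators under consideration are $\Ringm$-linear (respectively $\Abb^\circ$-linear) in their action on the chain complex, these chain-level identities descend to the corresponding homology groups.

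For the commutation relations I would first recall that each $\Phi_w^\star$ and $\Psi_z^\star$ is induced by a chain endomorphism of $\CFLs$ obtained by counting Maslov-index-one Whitney disks with an $n_w$ or $n_z$ weight. Given any pair $A_1, A_2$ of such operators, the composition $A_1 \circ A_2$ can be expressed as a weighted count over broken flows $\phi_1 \ast \phi_2$ with $\phi_i \in \pi_2^1$, which in turn sits as the codimension-one boundary of the one-dimensional moduli space $\Mcal(\phi)/\R$ for $\phi \in \pi_2^2(\x,\y)$. Defining $H_{A_1,A_2}: \CFLs \to \CFLs$ by counting such index-two disks with the appropriate $\wvar^{-1}$ or $\zvar^{-1}$ renormalization dictated by the types of $A_1$ and $A_2$, the standard degeneration argument yields
\begin{align*}
d \circ H_{A_1,A_2} + H_{A_1,A_2} \circ d \;=\; A_1 \circ A_2 + A_2 \circ A_1 + E_{A_1,A_2},
\end{align*}
where the error term $E_{A_1,A_2}$ records contributions from boundary degenerations along circles in $\alphas_1$ or $\alphas_2$ carrying the marked basepoints. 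In all cases except when $A_1 = \Phi_w$, $A_2 = \Psi_z$ with $w$ and $z$ adjacent on a common component $K$ of $L$, a direct analysis shows $E_{A_1,A_2} = 0$, yielding the first four relations and the vanishing part of the fifth; in the exceptional case, a constant bubble supported on the small arc of $\alphas_j \cap K$ between $w$ and $z$ contributes the identity, provided $|K \cap \wpoint| = |K \cap \zpoint| > 1$ so that such a bubble is not obstructed by other basepoints on the same $\alphas_j$-curve.

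To prove $\Phi_L^- \equiv 0$, I would define an $\Fbb$-linear map $\partial_\wvar: \CFLm \to \CFLm$ by declaring $\partial_\wvar \x = 0$ for $\x \in \Sbf(H,\spinc)$, imposing the Leibniz rule $\partial_\wvar(p \cdot c) = \partial_\wvar(p) \cdot c + p \cdot \partial_\wvar(c)$ for $p \in \Ringm$ and $c \in \CFLm$, and setting $\partial_\wvar(\wvar) = 1$, $\partial_\wvar(\zvar) = 0$. A direct computation on a generator $\x$ gives
\begin{align*}
\bigl(\partial_\wvar \circ d + d \circ \partial_\wvar\bigr)(\x) \;=\; \sum_{\y \in \Sbf(H,\spinc)}\,\sum_{\phi \in \pi_2^1(\x,\y)} n_\wpoint(\phi)\cdot\wvar^{n_\wpoint(\phi)-1}\zvar^{n_\zpoint(\phi)}\cdot\#\bigl(\Mcal(\phi)/\R\bigr)\cdot \y \;=\; \Phi_L^-(\x),
\end{align*}
and the identity extends to all of $\CFLm$ by $\Fbb$-linearity and the derivation property. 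Hence $\Phi_L^-$ is null-homotopic as a chain endomorphism of $\CFLm$ and induces the zero map on $\HFLm(Y,\Link)$. This argument is specific to $\star = -$: on $\CFLh = \CFLm/\wvar$ the formal derivative $\partial_\wvar$ is not well-defined, consistent with the fact that the statement only concerns $\Phi_L^-$.

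The main obstacle is the careful combinatorial analysis of the error term $E_{\Phi_w,\Psi_z}$ produced by boundary degenerations. One must isolate the constant-bubble contributions supported on short arcs between adjacent $w$ and $z$ on the same component, identify exactly which adjacency configurations are unobstructed, and verify that the resulting contribution is precisely $\delta_{w,z}\cdot Id$ as claimed, including the subtle condition that the component $K$ must contain strictly more than one basepoint of each type. Once this degeneration analysis is handled, all the commutation relations follow in a uniform manner from the homotopy identity above, and the exterior-algebra actions on $\HFLs(Y,\Link)$ are an immediate consequence of the squaring and (anti)commutation relations among the $\Phi_w^\star$ and $\Psi_z^\star$.
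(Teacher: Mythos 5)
Your argument for the vanishing of $\Phi_L^-$ is correct and takes essentially the same route as the paper's, but packages it more cleanly. The paper passes to a reduced complex with a homogeneous basis $\Sbf'(H,\spinc)$, writes a closed element as $\x=\sum_j \wvar^{k_j}\zvar^{l_j}\x_j$, and produces the explicit primitive $\z=\sum_{j:\,k_j\ \text{odd}}\wvar^{k_j-1}\zvar^{l_j}\x_j$ with $d(\z)=\Phi_L(\x)$, using the grading relations $k(\x_j,\y)=k(\y)-k_j$ to track exponents. Your $\z$ is precisely $\partial_\wvar(\x)$, so you have recognized that the paper's ad hoc element is the image of $\x$ under an $\Fbb$-linear derivation, and you get $\Phi_L=[\partial_\wvar,d]$ as an operator identity. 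This avoids the reduced-complex setup and the grading bookkeeping entirely. One small point you gesture at but should state explicitly is that the verification on generators upgrades to all of $\CFLm$ only because the graded commutator $\partial_\wvar\circ d + d\circ\partial_\wvar$, though neither summand is $\Ringm$-linear, becomes $\Ringm$-linear after cancellation in characteristic $2$ (the two $\partial_\wvar(p)\cdot d(c)$ terms annihilate each other); without this, agreement on $\Sbf(H,\spinc)$ would not suffice. Your observation that $\partial_\wvar$ does not descend to $\CFLh=\CFLm/\wvar$ is a correct consistency check.

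For the commutation relations~\eqref{eq:commutation-properties}, the paper simply cites \cite[Lemmas 4.6 and 4.7]{Zemke-1}, whereas you sketch a direct moduli-space degeneration argument. The overall plan is standard, but there is a concrete error in your description of the homotopy operator: you cannot define $H_{A_1,A_2}$ by ``counting such index-two disks,'' since for $\phi\in\pi_2^2(\x,\y)$ the space $\Mcal(\phi)/\R$ is a $1$-manifold and has no well-defined point count. The homotopy should instead count \emph{index-one} disks with a second-derivative weight such as $n_w(\phi)\,n_{w'}(\phi)\,\wvar^{n_\wpoint(\phi)-2}\zvar^{n_\zpoint(\phi)}$; the identity $dH+Hd=A_1A_2+A_2A_1+E$ then arises by examining the ends of the one-dimensional index-two moduli spaces. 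As written, you have conflated the object whose boundary you count (index-two) with the object that defines the homotopy (index-one). Since this half of the proposition is delegated to Zemke in the paper anyway, the gap is more a matter of precision than of strategy, and the remainder of your sketch, including the identification of the boundary-degeneration contribution as the source of the $\delta_{w,z}\cdot Id$ term, is consistent with the standard picture; but the definition of $H_{A_1,A_2}$ as stated would not compile into a proof.
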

\begin{proof}
All claims, except for the last one, follow from \cite[Lemmas 4.6 and 4.7]{Zemke-1}. We thus focus on proving the triviality of the actions of $\Phi_L^-$. 
Let us fix the strongly $\spinc$-admissible Heegaard diagram $H$ as before for some $\spinc\in\SpinC(Y)$.	 The differential $d_\spinc$ of the chain complex $(C_\spinc,d_\spinc)$ induces a differential $d_\spinc^-$ on the $\Abb^-$-module
\begin{align*}
	C_\spinc^-=\HFLhh(Y,\Link,\spinc)\otimes_{\Fbb}\Ringm,
\end{align*}		
so that the homology of
$(C_\spinc^-,d_\spinc^-)$ gives $\HFLm(Y,\Link,\spinc)$. 
Moreover, the $\Ringm$-homomorphism $d^-_\spinc$ determines the point action map $\Phi_L^-$ on $\HFLm(Y,\Link,\spinc)$, as follows. Let us denote a set of homogeneous generators  (with respect to the homological bi-grading) of $\HFLhh(Y,\Link,\spinc)$ by $\Sbf'(H,\spinc)$ and assume that for every $\x\in\Sbf'(H,\spinc)$ we have
\begin{align*}
	d^-_\spinc(\x)=\sum_{\y\in\Sbf'(H,\spinc)}\epsilon(\x,\y)\wvar^{k(\x,\y)}\zvar^{l(\x,\y)}\y,	
\end{align*}	
for $\epsilon(\x,\y)\in\Fbb$ and some integers $k(\x,\y)$ and $l(\x,\y)$ which are non-negative if $\epsilon(\x,\y)$ is non-zero. In fact,  $k(\x,\y)+l(\x,\y)$ is strictly positive if $\epsilon(\x,\y)$ is non-zero. We then obtain a chain map, which is denoted by $\Phi_L:C_\spinc^-\ra C_\spinc^-$ and is defined by
\begin{align*}
	\Phi_L(\x)=\sum_{\y\in\Sbf'(H,\spinc)}\epsilon(\x,\y)k(\x,\y)\wvar^{k(\x,\y)-1}\zvar^{l(\x,\y)}\y,
\end{align*}
for every $\x\in\Sbf'(H,\spinc)$. It follows that $\Phi_L^-$ is the maps induced on $\HFLm(Y,\Link,\spinc)$ by $\Phi_L$. 
Choose a closed element  $\x\in\HFLm(Y,\Link,\spinc)$ which is homogeneous with respect to the homological bi-grading. For simplicity, let us assume that the homological bi-grading of $\x$ is $(0,0)$ (this may be obtained by a shift in bi-grading, which does not affect the claim). We may then choose the basis $\Sbf'(H,\spinc)$ so that 
\begin{align*}
	\x=\sum_{j=1}^m\wvar^{k_j}\zvar^{l_j}\x_j,\quad\text{where}\ \x_1,\ldots,\x_m\in\Sbf'(H,\spinc),\quad k_1>\cdots>k_m\geq 0\quad \text{and}\quad 0\leq l_1<\cdots<l_m.	
\end{align*}		 
The homological bi-grading of $\x_j$ is then given by $(2k_j,2l_j)$.  For every $\y\in\Sbf'(H,\spinc)$ in homological bi-grading $(2k(\y)-1,2l(\y)-1)$, the coefficient of $\y$ in $d^-_\spinc(\x)$ is 
\begin{align*}
	\sum_{j=1}^m \epsilon(\x_j,\y)\wvar^{k(\y)}\zvar^{l(\y)}=\wvar^{k(\y)}\zvar^{l(\y)}\Big(\sum_{j=1}^m\epsilon_j(\x_j,\y)\Big).
\end{align*}  
Since $\x$ is $d^-_\spinc$-closed, it follows that for every $\y\in\Sbf'(H,\spinc)$, $\sum_{j=1}^m\epsilon_j(\x_j,\y)=0$.
On the other hand, the coefficient of $\y$ in $\Phi_L(\x)$ is given by
\begin{align*}
	\sum_{j=1}^m (k(\y)-k_j)\epsilon(\x_j,\y)\wvar^{k(\y)-1}\zvar^{l(\y)}=\wvar^{k(\y)-1}\zvar^{l(\y)}\Big(\sum_{j: k_j\ \text{is odd}}\epsilon_j(\x_j,\y)\Big).
\end{align*}  
If we set $\z=\sum_{j: k_j\ \text{is odd}}\wvar^{k_j-1}\zvar^{l_j}\x_j$, it follows that
\begin{align*}
	d_\spinc^-(\z)=	
	\sum_{\y\in\Sbf'(H,\spinc)}\sum_{j: k_j\ \text{is odd}}\epsilon_j(\x_j,\y)\wvar^{k(\y)-1}\zvar^{l(\y)}=\Phi_L(\x).
\end{align*}  
Therefore, the image of the homology class in $\HFLm(Y,\Link,\spinc)$ represented by $\x$ under $\Phi_L^-$ is trivial. 
This completes the proof.
\end{proof}	
\begin{defn}\label{defn:weak-HF}
Given a pointed link $(Y,\Link)$ with underlying  oriented  link $(Y,L)$, for $\star\in\{-,\circ,\wedge\}$ define the {\emph{Floer homology groups}}  $\HFKs(Y,L)$  by
\begin{align}\label{eq:kernel-defn}
&\HFKs(Y,L)=\bigoplus_{\spinc\in\SpinC(Y)}\HFKs(Y,L,\spinc):=\Ker\big(\mywedge^\star_\wpoint:\HFLs(Y,\Link)\ra \HFLs(Y,\Link)\big).
\end{align}		
Moreover, for $\star\in\{-,\circ\}$ define the {\emph{weak Floer homology groups}} $\HFKsw(Y,L)$  by
 \begin{align*}
 	&\HFKsw(Y,L)=\bigoplus_{\spinc\in\SpinC(Y)}\HFKsw(Y,L,\spinc)
 	:=\HFKs(Y,L)\cap \left(\zvar\cdot\HFLs(Y,\Link)\right)\subset\HFKs(Y,L).
 \end{align*}
\end{defn}	

\begin{prop}\label{prop:weak-HF}
Given a pointed link $(Y,\Link)$ with underlying  oriented  link $(Y,L)$, and a $\SpinC$ structure $\spinc\in\SpinC(Y)$, the  Floer homology groups 	$\HFKs(Y,L,\spinc)$ and the weak Floer homology groups $\HFKsw(Y,L,\spinc)$  only depend on  $(Y,L,\spinc)$ and not the decoration $\Link$ of $L$, for $\star\in\{-,\circ,\wedge\}$. These groups are equipped with  relative homological bi-gradings $(\grw,\grz)$ (again, independent of the decoration) which take their values in $\Z$ if $\spinc$ is a torsion $\SpinC$ structure. 
\end{prop}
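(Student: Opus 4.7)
The plan is to reduce independence of the decoration to the case of adding a single pair of basepoints, since any two decorations $\Link,\Link'$ of the same underlying oriented link $(Y,L)$ are related by a sequence of isotopies of the basepoints along $L$ (which induce canonical chain isomorphisms respecting every basepoint action) together with finitely many \emph{free stabilizations}, i.e.\ the insertion of a single consecutive pair $(w_0,z_0)$ into a component $K$ of $L$. The statement to establish is therefore that for two decorations differing by a single free stabilization, the kernels defined in \eqref{eq:kernel-defn} (and their weak versions) are canonically isomorphic as bi-graded $\Abb^\star$-modules.

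The key input is the free stabilization formula for link Floer homology (proved in the present formalism in \cite{AE-2} and \cite{Zemke-1}): if $\Link'$ is obtained from $\Link$ by inserting a pair $(w_0,z_0)$ on $K$, then there is a natural chain isomorphism
\begin{align*}
\CFLs(Y,\Link',\spinc)\;\cong\;\CFLs(Y,\Link,\spinc)\otimes_{\Fbb} V,
\qquad V=\Fbb\theta^+\oplus\Fbb\theta^-,
\end{align*}
with $\theta^+$ and $\theta^-$ placed in bi-degrees $(0,-1)$ and $(-1,0)$, such that the differential is $d\otimes\Id_V$, the new action $\Phi^\star_{w_0}$ is $\Id\otimes \phi$ where $\phi(\theta^-)=\theta^+$ and $\phi(\theta^+)=0$, and for $w\in\wpoint$ the action $\Phi^\star_w$ is $\Phi^\star_w\otimes\Id_V$. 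Passing to homology and intersecting kernels gives
\begin{align*}
\HFKs(Y,L;\Link')=\ker\!\left(\textstyle\mywedge^\star_{\wpoint\cup\{w_0\}}\right)=\ker\!\left(\textstyle\mywedge^\star_\wpoint\right)\cap\ker(\Phi^\star_{w_0})\;\cong\;\HFKs(Y,L;\Link)\otimes\Fbb\theta^+,
\end{align*}
which I identify canonically with $\HFKs(Y,L;\Link)$. Since $\zvar$ acts only on the first tensor factor and commutes with the identification, the same argument restricts to $\HFKsw$. Iterating over the sequence of stabilizations and isotopies connecting $\Link$ to $\Link'$, and invoking standard naturality of the stabilization maps (as in \cite{JTZ, AE-2, Zemke-1}) to see that the composite isomorphism does not depend on the chosen sequence, yields the desired canonical identification $\HFKs(Y,L,\spinc)$.

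For the bi-grading, the relative bi-grading on $\HFLs(Y,\Link,\spinc)$ takes values in $\Z/\langle 2c_1(\spinc),H_2(Y;\Z)\rangle$, which reduces to $\Z$ for torsion $\spinc$; under the stabilization isomorphism above the generators $\theta^\pm$ carry fixed bi-degrees, so the relative bi-grading on $\HFKs(Y,L;\Link)\otimes\theta^+$ agrees with that on $\HFKs(Y,L;\Link)$, and the restricted relative bi-grading on $\HFKs(Y,L,\spinc)$ is therefore well-defined independently of $\Link$. The main obstacle is verifying the precise form of the free stabilization formula — in particular ensuring that, despite the adjacency-dependent relation $[\Phi^\star_w,\Psi^\star_z]=\delta_{w,z}\Id$ from Proposition~\ref{prop:point-action-independence}, the action of each old $\Phi^\star_w$ on the stabilized complex retains the simple tensor form $\Phi^\star_w\otimes\Id_V$ (rather than acquiring a correction from the new basepoints). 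This is handled by choosing the Heegaard diagram for $\Link'$ to agree with that of $\Link$ outside a small disk neighborhood of the inserted pair, so that every holomorphic disk contributing to $\Phi^\star_w$ avoids $(w_0,z_0)$ and the correction terms collapse into the action of $\Phi^\star_{w_0}$ and $\Psi^\star_{z_0}$ on the $V$ factor alone.
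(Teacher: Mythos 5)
Your argument is correct and follows essentially the same route as the paper's proof: both reduce to the case of adding a single adjacent pair of basepoints and invoke Zemke's quasi-stabilization structure (\cite[Lemmas 4.12--4.14, Prop.\ 4.18]{Zemke-1}) to identify $\Ker(\Phi^\star_{w_0})$ inside the stabilized homology with the unstabilized group, using commutativity of the stabilization maps with the remaining basepoint actions and with one another to obtain a coherent transitive system. The paper packages the splitting via the quasi-stabilization chain maps $S^{\pm}_{w,z}$, $S^{\pm}_{z,w}$ and their relations on homology, rather than asserting the chain-level tensor form $\CFLs(Y,\Link')\cong\CFLs(Y,\Link)\otimes V$ with $d'=d\otimes\Id$ that you write (which holds only up to chain homotopy and requires the neck-stretching argument you gesture at in your final paragraph); the content is the same, and your acknowledged ``main obstacle'' is precisely what Zemke's lemmas already resolve.
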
	
\begin{proof}
Let us denote the group defined by the right-hand-side-of (\ref{eq:kernel-defn}) by $\HFKs(Y,L,\spinc;\Link)$.	
Given a pointed link $(Y,\Link)$, with underlying link $(Y,L)$ and $\Link=(L,\wpoint,\zpoint)$, assume that $z$ and $w$ are basepoints belonging to the same component of $L-\wpoint-\zpoint$. We then set $\wpoint'=\wpoint\cup\{z\}$ and $\zpoint'=\zpoint\cup\{w\}$ and further assume that $z$ and $w$ are labeled so that $\Link'=(L,\wpoint',\zpoint')$ gives a new pointed link in $Y$ with a pair of extra basepoints on it. Zemke defines the quasi-stabilization maps 
\begin{align*}
S_{w,z}^+,S_{z,w}^+:\CFLm(Y,\Link,\spinc)\ra \CFLm(Y,\Link',\spinc)\quad\text{and}\quad
S_{w,z}^-,S_{z,w}^-:\CFLm(Y,\Link',\spinc)\ra \CFLm(Y,\Link,\spinc).	
\end{align*}
in  \cite{Zemke-1} and proves that for every $w'\in\wpoint$ and $z'\in\zpoint$, $\Phi_{w'}$ and $\Psi_{z'}$ commute with $S_{w,z}^{\pm}$ and $S_{z,w}^{\pm}$ (up to chain homotopy), while the following relations are also satisfied (again, up to chain homotopy) \cite[Lemmas 4.12, 4.13 and 4.14]{Zemke-1} :
\begin{align*}
&(1)\ \Phi_w=S_{w,z}^+S_{w,z}^-,
&&&&(2)\ S_{z,w}^-S_{z,w}^+=S_{w,z}^-S_{w,z}^+=0,\\
&(3)\ S_{w,z}^-S_{z,w}^+=S_{z,w}^-S_{w,z}^+=Id,
&&\text{and}
&&(4)\ S_{z,w}^+S_{w,z}^-+S_{w,z}^+S_{z,w}^-=Id.
\end{align*}	
It follows from $(2),(3)$ and $(4)$ that the maps induced by $S_{z,w}^+$ and $S_{w,z}^+$ in homology give isomorphisms to their images and that we have a decomposition
\begin{align*}
\HFLm(Y,\Link',\spinc)=\Image\big((S_{z,w}^+)_*\big)\oplus\Image\big((S_{w,z}^+)_*\big)
=\HFLm(Y,\Link,\spinc)\oplus\HFLm(Y,\Link,\spinc).	
\end{align*}	
Moreover, it follows from $(1),(2)$ and $(3)$ that $\Phi^-_w$ is trivial on $\Image\big((S_{w,z}^+)_*\big)$ and that 
under the above identification, $\Phi^-_w$ may in fact be identified as the identity map from the first component to the second component (i.e. that it is given by $\colvec{0&0\\I&0}$). In particular, $\Ker(\Phi^-_w)$ is identified as $\Image\big((S_{w,z}^+)_*\big)$. Moreover, since the isomorphism 
\begin{align*}
	(S^+_{z,w})_*:\HFLm(Y,\Link,\spinc)\ra \Ker(\Phi^-_w)\subset\HFLm(Y,\Link',\spinc)
\end{align*}	
 commutes with all point action maps $\Phi^-_{w'}$ for all $w'\in\wpoint$, it follows that $(S^+_{z,w})_*$ gives an isomorphism from  $\HFKm(Y,L,\spinc;\Link)$ to  $\HFKm(Y,L,\spinc;\Link')$. If $(w',z')$ is a new pair of basepoints on a component of $L-\wpoint'-\zpoint'$, \cite[Proposition 4.18]{Zemke-1} implies that $S_{z,w}^+\circ S_{z',w'}^+=S_{z',w'}^+\circ S_{z,w}^+$ up to chain homotopy. Therefore, we obtain a compatible system of isomorphisms between  $\HFKm(Y,L,\spinc;\Link)$ for different decorations $\Link$ of $L$, i.e. an isomorphism 
\begin{align*}
S_{\Link\ra\Link'}:\HFKm(Y,L,\spinc;\Link)\ra \HFKm(Y,L,\spinc;\Link')
\end{align*}
whenever $\Link'$ is obtained by adding basepoints to $\Link$, so that $S_{\Link'\ra\Link''}\circ S_{\Link\ra\Link'}=S_{\Link\ra\Link''}$. In particular, $\HFKm(Y,L,\spinc)$ may be defined as a concrete group. The same argument as above implies that $\HFKs(Y,L,\spinc)$ may be defined as a concrete group  for $\star\in\{\circ,\wedge\}$. The argument for the invariance and naturality of $\HFKsw(Y,L,\spinc)$ is similar.
\end{proof}	

If $L$ is a knot and $\Link$ is a decoration of $L$ with a pair of basepoints on it, the actions of $\wedgewhh$ and $\wedgezhh$ on $\HFLhh(Y,L)$ are {\emph{usually}} non-trivial (see \cite{Sarkar-Auto}). Nevertheless, the induced actions of $\wedgews$ on $\HFLs(Y,L)$ are always trivial for $\star=-,\circ$, by the last claim in Proposition~\ref{prop:point-action-independence}.  Therefore, the following proposition follows immediately.

\begin{prop}\label{prop:HF-of-knots}
	If $\Link$ is a decoration of a knot $K$ in the $3$-manifold $Y$  by a pair of basepoints, 
\begin{align*}
			\HFKm(Y,L,\spinc)=\HFLm(Y,\Link,\spinc),\quad\forall\ \spinc\in\SpinC(Y).
\end{align*}
\end{prop}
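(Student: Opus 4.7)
The plan is to read off the proposition directly from Proposition~\ref{prop:point-action-independence} together with the defining formula \eqref{eq:kernel-defn} for $\HFKm$. Concretely, if $L=K$ is a knot and $\Link$ decorates $L$ with only a single pair of basepoints, then $\wpoint=\{w\}$ has exactly one element. The exterior algebra $\wedgewm$ acting on $\HFLm(Y,\Link)$ is then generated, as an $\Abb^-$-algebra, by the single operator $\Phi^-_w$. Consequently the subalgebra of $\wedgewm$ that acts non-trivially on $\HFLm(Y,\Link,\spinc)$ is spanned by $\Phi^-_w$, and
\begin{align*}
\HFKm(Y,L,\spinc)
=\Ker\!\big(\wedgewm:\HFLm(Y,\Link,\spinc)\ra\HFLm(Y,\Link,\spinc)\big)
=\Ker\!\big(\Phi^-_w\big).
\end{align*}

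The first step in the proof is to identify $\Phi^-_w$ with $\Phi^-_L$. Since $L$ has the single component $K$, and $K$ carries the single basepoint $w\in\wpoint$, the definitions
\begin{align*}
\Phi^-_K=\sum_{w'\in\wpoint\cap K}\Phi^-_{w'}\quad\text{and}\quad \Phi^-_L=\sum_{K'\in C(L)}\Phi^-_{K'}
\end{align*}
both collapse to the single summand $\Phi^-_w$. Hence $\Phi^-_w=\Phi^-_L$ as $\Abb^-$-endomorphisms of $\HFLm(Y,\Link,\spinc)$.

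The second (and only substantive) step is to invoke the last claim of Proposition~\ref{prop:point-action-independence}, which asserts that $\Phi^-_L=0$ on $\HFLm(Y,\Link)$. This is precisely the content that was proved using the filtration argument preceding the proposition: every homogeneous cycle $\x=\sum_j \wvar^{k_j}\zvar^{l_j}\x_j$ admits the explicit primitive $\z=\sum_{j:\, k_j\ \text{odd}}\wvar^{k_j-1}\zvar^{l_j}\x_j$ with $d^-_{\spinc}(\z)=\Phi_L(\x)$. Combining this with the previous step gives $\Ker(\Phi^-_w)=\HFLm(Y,\Link,\spinc)$, and the defining equality becomes
\begin{align*}
\HFKm(Y,L,\spinc)=\HFLm(Y,\Link,\spinc),
\end{align*}
which is the desired conclusion. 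There is no real obstacle here: all the work has already been done in Proposition~\ref{prop:point-action-independence}, and what remains is the bookkeeping observation that, for a knot with a single pair of basepoints, the full $\wedgewm$-action reduces to the single operator $\Phi^-_L$.
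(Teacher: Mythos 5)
Your proof is correct and follows exactly the paper's reasoning: the paper simply notes that Proposition~\ref{prop:HF-of-knots} is an immediate consequence of the last claim of Proposition~\ref{prop:point-action-independence} (the vanishing of $\Phi_L^-$), and you have spelled out the small bookkeeping step — that for a knot with a single pair of basepoints, $\wedgewm$ is generated over $\Abb^-$ by the single operator $\Phi^-_w=\Phi^-_K=\Phi^-_L$, so the common kernel defining $\HFKm(Y,L,\spinc)$ is all of $\HFLm(Y,\Link,\spinc)$.
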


\subsection{The trisection map and the group actions}\label{subsec:trisection-map}
Let  us now assume that $H=(\HSurf,\alphas_0,\alphas_1,\alphas_2,\wpoint,\zpoint)$  is a  trisection diagram of type $(g;n;\cbf)$  with $\cbf=(c_0,c_1,c_2)$. Let  $Y_i=\partial_i X^\circ_H$ be the $i$-th boundary component of the $4$-manifold $X^\circ_H$ associated with $H$. Let $H_i$ be obtained from $H$ by removing $\alphas_i$ and represent the pointed link $(Y_i,\Link_i)$. Set $\Sbf(H_i)=\amalg_{\spinc_i\in\SpinC(Y_i)}\Sbf(H_i,\spinc_i)$. Then, for   every triple of generators 
\[\x=\x_0\times\x_1\times\x_2\in\Sbf(H):=\Sbf(H_0)\times\Sbf(H_1)\times\Sbf(H_2)\] 
let  $\pi_2^j(\x):=\pi_2^j(\x_0,\x_1,\x_2)$ denote the space of homotopy classes of triangles with Maslov index $j$  connecting $\x_0,\x_1$ and $\x_2$ and set $\pi_2(\x)=\amalg_j\pi_2^j(\x)$.  Given   $\x$ and $\x'=\x'_0\times\x'_1\times\x'_2\in\Sbf(H)$,  the triangle classes $\Delta\in\pi_2(\x)$ and $\Delta'\in\pi_2(\x')$ are called equivalent if there exist $\phi_i\in\pi_2(\x_i,\x'_i)$ (for $i\in\Z/3$) such that $\Delta$ is obtained from $\Delta'$ by juxtaposition of each $\phi_i$ at  $\x'_i$. The set of equivalence classes of such triangle classes is denoted by $\Triangles(H)$ and  is identified   with the set $\SpinC(X^\circ_H)$ of $\SpinC$ structures on  $X^\circ_H$ (e.g. \cite[Proposition 2.6]{AE-2}) by a pair of maps \[\spinc_\wpoint,\spinc_\zpoint:\Triangles(H)\ra\SpinC(X^\circ_H).\] 
If  $H$ is nice, there is a subset $\Triangles_0(H)\subset\Triangles(H)$ which consists of classes $\Delta\in\pi_2(\x_0,\x_1,\x_2)$, where $\spinc_\wpoint(\x_0)=\spinc_\zpoint(\x_0)=\spinc_0$ is the canonical $\SpinC$ structure on $Y_0=\#^{k_0}S^2\times S^1$ (with $c_1(\spinc_0)=0$). Then
\[\spinc_\wpoint,\spinc_\zpoint:\Triangles_0(H)\ra\SpinC(X_H)\subset \SpinC(X^\circ_H).\]
If  the second homology group of $Y_i$ is trivial for $i=1,2$, $\Surface$ determines a well-defined homology class $[\Surface]\in  H_2(X_H;\Z)$ (by adding a Seifert surface for each $L_i$ to $\Surface$) and 
\begin{align*}
	\spinc_\wpoint(\Delta)-\spinc_\zpoint(\Delta)=\PD[\Surface],\quad\forall\ \Delta\in\Triangles_0(H).
\end{align*} 
We usually use the identification given by $\spinc_\wpoint$ to associate $\SpinC$ structures in $\SpinC(X_H)$ to triangle classes in $\Triangles_0(H)$. 
For $\spinct\in\SpinC(X_H)$, the corresponding subset of $\pi_2(\x)$ is denoted by $\pi_2(\x,\spinct)$. Every $\SpinC$ class $\spinct\in\SpinC(X_H)$ restricts to 
$\spinct|_{Y_i}\in\SpinC(Y_i)$ for $i=1,2$. 
Thus $\pi_2(\x,\spinct)=\emptyset$ unless $\spinc_{\wpoint}(\x_i)=\spinct|_{Y_i}$ for $i=1,2$.
Given $\spinct\in\SpinC(X_H)$, the $\spinct$-{\emph{admissibility}} of  $H$  will be implicit throughout our discussions (see \cite[Section 2.5]{AE-2} for precise definitions). We  sometimes skip mentioning this assumption, since $\spinct$-admissibility may be acheived using isotpies.\\

Fix  a decorated cobordism $(X,\TSurface,\spinct):(Y_2,\Link_2,\spinct|_{Y_2})\ra (Y_1,\Link_1,\spinct|_{Y_1})$ and a  nice ($\spinct$-admissible)
trisection diagram $H$  compatible with $(X,\TSurface)$. 
Associated with $H$ we then find the pointed links $(Y_{H_i},\Link_{H_i})$ for $i\in\Z/3$. Let $\spinc_i$ denote the $\SpinC$ structure $\spinct|_{Y_{H_i}}$ and 
$(C_i,d_i)$ denote the chain complex associated with $(H_i,\spinc_i)$.
We further assume that $Y_{H_i}=Y_i\#(\#^{k_i}S^2\times S^1)$ for $i=1,2$ and that $Y_{H_0}=\#^{k_0}S^2\times S^1$ and that
$\spinc_0$ is the $\SpinC$ class on $Y_{H_0}$ with $c_1(\spinc_0)=0$.  Let $\x_0^{top}$ denote the top generator of $\HFLm(H_0,\spinc_0)$ with respect to the homological grading $\grw$. Define 
\begin{align*}
\fmap^\star_{H,\spinct}:\HFLs(Y_{H_2},\Link_{H_2},\spinct|_{Y_{H_2}})\ra 
\HFLs(Y_{H_1},\Link_{H_1},\spinct|_{Y_{H_1}})\quad\text{for}\ \ \star\in\{-,\circ,\wedge\},
\end{align*}
as the map induced on homology by the chain map $f_{H,\spinct}:C_2\ra C_1$,  given by 	
\begin{equation}\label{eq:map-definition}
	\begin{split}	
		&f_{H,\spinct}(\x_2)=F_\spinct(\x_2\otimes\x_0^{top}):=\sum_{\x_1\in \Sbf(H_1,\spinc_1)}\sum_{\substack{\Delta\in\pi^0_2(\x_0^{top}\times\x_1\times\x_2,\spinct)}}\asf(\Delta)\x_1,\quad\forall\ \ \x_2\in\Sbf(H_2)\\ &\quad\quad\text{where}\quad \asf(\Delta):=\#\Mcal(\Delta)\cdot\wvar^{n_\wpoint(\Delta)}\cdot\zvar^{n_\zpoint(\Delta)}\in\Ringm.
	\end{split}	
\end{equation}
The  $\Abb^\star$-homomorphism $\fmap^\star_{H,\spinct}$ 
respects the homological and basepoint actions in the following sense. 
\begin{lem}\label{lem:homology-action-interactions}
	With the nice trisection diagram $H=(\HSurf,\alphas_0,\alphas_1,\alphas_2,\wpoint,\zpoint)$ and the $\SpinC$ class $\spinct$ on $X_H$ as before, let $\gamma$ be a closed loop on $\HSurf$ and $w\in\wpoint$ be a fixed basepoint.  Then for $\star\in\{-,\circ,\wedge\}$,
	\begin{equation}\label{eq:action-and-the-map}
		\begin{split}
			&\fmap^\star_{H,\spinct}\left(\HAction^\star_\gamma(\x_2)\right)=\HAction^\star_\gamma\left(\fmap^\star_{H,\spinct}(\x_2)\right)\ \ \text{and}\ \  \fmap^\star_{H,\spinct}\left(\Phi^\star_w(\x_2)\right)=\Phi^\star_w\left(\fmap^\star_{H,\spinct}(\x_2)\right)
			,\ \ \forall\ \x_2\in\HFLs(H_2,\spinc_2).
		\end{split}
	\end{equation}	
\end{lem}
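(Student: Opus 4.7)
The plan is to deduce both identities from degeneration arguments on moduli spaces of holomorphic triangles, directly paralleling the disk-level arguments underlying Lemma~\ref{lem:homology-action} and the commutation relations in Proposition~\ref{prop:point-action-independence}. Since $\fmap^\circ_{H,\spinct}$ and $\fmap^\wedge_{H,\spinct}$ are obtained from $\fmap^-_{H,\spinct}$ by tensoring with $\Abb^\circ$ and $\Ahat$, and $\Phi^\star_w$ and $\HAction^\star_\gamma$ are similarly induced from their $\star=-$ versions, it suffices to produce chain-level homotopies on the $\Ringm$-complexes involved.

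For the basepoint commutation I would introduce a chain homotopy $T_w:C_2\to C_1$ defined by
\[
T_w(\x_2):=\sum_{\x_1\in\Sbf(H_1,\spinc_1)}\sum_{\Delta\in\pi_2^1(\x_0^{top}\times\x_1\times\x_2,\spinct)}\#\Mcal^w(\Delta)\cdot\wvar^{n_\wpoint(\Delta)-1}\zvar^{n_\zpoint(\Delta)}\cdot\x_1,
\]
where $\Mcal^w(\Delta)$ is the moduli of pairs $(u,x)$ with $u$ a $J$-holomorphic representative of $\Delta$ and $x$ a marked point in its domain satisfying $u(x)\in\{w\}\times\Sym^{n-1}\HSurf$. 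Examining the ends of the associated $1$-dimensional moduli of Maslov index $2$ triangles with the constraint at $w$, disk bubbling at the $\x_1$ and $\x_2$ vertices produces $\Phi_w\circ f_{H,\spinct}$ and $f_{H,\spinct}\circ\Phi_w$ (at the chain level), while bubbling at the $\x_0^{top}$ vertex contributes zero because $\x_0^{top}$ is the top-graded generator of $\HFLm(H_0,\spinc_0)$ for the unlink in $\#^{k_0}S^2\times S^1$, leaving no Maslov index $1$ class out of it in the relevant $\SpinC$ sector. The remaining ends, where the marked point collides with the triangle boundary, assemble into $d_1\circ T_w+T_w\circ d_2$, giving the chain homotopy identity that descends to the first equation of (\ref{eq:action-and-the-map}).

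The homology action commutation is handled in parallel. For $j\in\{0,1,2\}$ and a reference interior point $v^j$ of the $e_j$-edge of $\Triangle$, I would define $T^j_\gamma:C_2\to C_1$ by counting Maslov index $1$ triangle classes equipped with a marked point mapping to $\{v^j\}\times(\gamma\cap\alphas_j)$. The same degeneration analysis yields, via disk bubbling at the $\x_1$ and $\x_2$ vertices, chain-level contributions $A^{j_1}_\gamma\circ f_{H,\spinct}$ and $f_{H,\spinct}\circ A^{j_2}_\gamma$ for specific $j_1,j_2\in\{1,2\}$ determined by which cut system bounds the bubbling disk. Passing to homology and invoking $\HAction^1_\gamma=\HAction^2_\gamma$ from Lemma~\ref{lem:homology-action} reconciles these contributions and produces the second equation of (\ref{eq:action-and-the-map}). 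One must also check, as in the proof of Lemma~\ref{lem:homology-action}, that $T^j_\gamma$ is chain-homotopic to $T^{j'}_\gamma$ for different choices of $j$ and depends only on the homology class of $\gamma$ in $H_1(X_H;\Fbb)$.

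The main obstacle in both arguments is the systematic bookkeeping of the boundary strata of the Maslov index $2$ triangle moduli, and in particular the vanishing of the disk-bubbling contribution at the $\x_0^{top}$ vertex. This vanishing relies on the top-degree property of $\x_0^{top}$ in the Floer homology of the $Y_0$-side together with the $\spinct$-admissibility of the trisection diagram. Once these are secured, the remainder of the argument is a direct transcription of the disk degenerations underlying Lemma~\ref{lem:homology-action} and Proposition~\ref{prop:point-action-independence} into the triangle setting.
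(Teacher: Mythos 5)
Your argument is in essence the paper's: both produce a chain homotopy $T$ on the $\Ringm$-level by counting constrained Maslov index $1$ triangles and then reading off the commutator from the boundary strata of the corresponding $1$-dimensional moduli, with the $\circ$ and $\wedge$ statements inherited by tensoring down. Two adjustments are worth noting. First, for the homology action the paper does not consider a general $j\in\{0,1,2\}$: it puts the marked point specifically on the edge $e_0$, the unique edge of $\Triangle$ \emph{not} adjacent to $v_0$. With that choice the marked point can only collide with disk bubbles forming at $v_1$ or $v_2$, producing $A_\gamma\circ f_{H,\spinct}$ and $f_{H,\spinct}\circ A_\gamma$ with both markings on the $\alphas_0$-boundary, and the $v_0$-bubble stratum contributes $g_{\gamma,\spinct}(d_0\x_0^{top})=0$ because $\x_0^{top}$ is a cycle. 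If you instead take $j\in\{1,2\}$ you pick up an additional boundary stratum where the marked point slides into a $v_0$-bubble, and that term involves $A_\gamma(\x_0^{top})$; it must be shown to vanish separately, which your sketch does not address. Taking $j=0$ removes the issue, and it also removes the need to invoke $\HAction^1_\gamma=\HAction^2_\gamma$ or to compare $T^j_\gamma$ with $T^{j'}_\gamma$. Second, in the basepoint case the $v_0$-bubble does \emph{not} vanish for lack of Maslov index $1$ classes out of $\x_0^{top}$ --- in a stabilized diagram such classes certainly exist with $n_w\neq 0$. The correct reason, and the one the paper invokes, is that $\Phi^\star_w(\x_0^{top})=0$ in $\HFLs(H_0,\spinc_0)$: the $n_w$-weighted sum over those classes is a boundary, because $\Phi_w$ raises $\grw$ by $1$ and $\x_0^{top}$ is the top generator. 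With these two corrections the outline coincides with the paper's proof.
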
	
\begin{proof}  
	We prove the first equality in (\ref{eq:action-and-the-map}). Let us denote the domain of our holomorphic triangles by $\triangle$, which has vertices $v_0,v_1$ and $v_2$ in clockwise order, and edges $e_0,e_1$ and $e_2$ (so that $e_j$ is the edge opposite to $v_j$).  Define the map $g_{\gamma,\spinct}:C_2\ra C_1$ by 
	\begin{displaymath}
		\begin{split}
			&g_{\gamma,\spinct}(\x_2):=\sum_{\x_1\in\Sbf(H_1,\spinc_1)}\sum_{\Delta\in\pi^0_2(\x_0^{top}\times\x_1\times \x_2)}
			a(\gamma,\Delta)\cdot\wvar^{n_\wpoint(\Delta)}\cdot\zvar^{n_\zpoint(\Delta)}\cdot\y_i, \quad\forall\ \x_2\in\Sbf(H_2,\spinc_2),\quad\text{where} \\
			&a(\gamma,\Delta)=\# \Mcal(\gamma,\Delta)=\#\left\{(u,x)\ \big|\ u\in\Mcal(\Delta),\ x\in\mathrm{Domain}(u),\ \ u(x)\in e_0\times(\gamma\cap\alphas_{0})\right\}.
		\end{split}
	\end{displaymath}	 
	Fix $\x=\x_0\times\x_1\times\x_2\in\Sbf(H)$ and  $\Delta\in\pi^1_2(\x)$ with $\x_0=\x_0^{top}$. The  ends of the $1$-manifold $\Mcal(\gamma,\Delta)$ correspond to degenerations $\Delta=\phi_i\star\Delta'$ of $\Delta$ into a triangle  $\Delta'$ of Maslov index $0$ and a Whitney disk  $\phi_i\in\pi_2(\x_i,\x'_i)$ of index $1$ (for some $i\in\Z/3$), where $\x'_i\in\Sbf_i(H)$ is the juxtaposition point for $\Delta'$ and $\phi_i$. Such degenerations are  modeled on 
	\begin{align}\label{eq:deg-of-moduli-space-action}
		\Big(\coprod_{\substack{i\in\Z/3}}\ \coprod_{\substack{
				\Delta=\Delta'\star\phi_i}}\left(\Mcal(\gamma,\Delta')\times\Mcal(\phi_i)\right)\Big)
		\ \coprod \ \Big(\coprod_{\substack{i\in\{1,2\}}}\ \coprod_{\substack{
				\Delta=\Delta'\star\phi_i}}\left(\Mcal(\Delta')\times\Mcal^0(\gamma,\phi_i)\right)\Big)
	\end{align}	  
	The total number of points in the moduli space of (\ref{eq:deg-of-moduli-space-action}) is thus zero. Since $\x_0^{top}$ is closed, considering such endpoints for all choices of $\Delta\in\pi^1_2(\x)$  gives 
	\begin{equation}\label{eq:action-and-map-2}
		\begin{split}
			d_1(g_{\gamma,\spinct}(\x_2))+g_{\gamma,\spinct}(d_2(\x_2))&=f_{H,\spinct}\left(A^0_\gamma(\x_2)\right)-A^0_\gamma\left(f_{H,\spinct}(\x_2)\right), \quad\forall \ \x_2\in\Sbf(H_2).
		\end{split}
	\end{equation}
	This completes the proof of the first equality. The proof of the second equality is very similar. We only need to notice that for every $w\in\wpoint$, $\Phi^\star_w(\x_0^{top})=0$ in $\HFLs(H_0,\spinc_0)$. 
\end{proof}	

Let us write $\HFLs_i$ for $\HFLs(H_i,\spinc_i)$.
If $\gamma$ is a closed loop on $\HSurf$, we may compute $\HAction_\gamma(\x_2)$ using $A^1_\gamma$. The argument  of Lemma~\ref{lem:homology-action-interactions} then implies that 
$F_{H,\spinct}\circ (A^1_\gamma\otimes I-I\otimes A^1_\gamma)$ is chain homotopic to zero. Lemma~\ref{lem:homology-action-interactions} and the above observation have interesting implications. The inclusion of $Y_{H_i}$ in $X_H$ gives a corresponding map $H_1(Y_{H_i};\Fbb)\ra H_1(X_H;\Fbb)$, which induces the map 
\begin{align*}
	\imath_{i}:\mywedge_{Y_{H_i}}=\mywedge^*H_1(Y_{H_i};\Fbb)\ra \mywedge_{X_H}=\mywedge^*H_1(X_H;\Fbb). 
\end{align*}	
Let $K_i\subset \mywedge_{Y_{H_i}}$ denote the kernel of $\imath_i$. If $[\gamma]$ is in $K_2$,  Lemma~\ref{lem:homology-action-interactions} and the above observation imply that $\fmap^\star_{H,\spinct}$ is trivial on the image of $\HFLs(H_2,\spinc_2)$ under the action of $[\gamma]$. Therefore, $\fmap^\star_{H,\spinct}$ factors through  $\HFLs(H_2,\spinc_2)/K_2$. On the other hand, the same argument implies that  $\Image(\fmap^\star_{H,\spinct})$ lies in the kernel of the action of $K_1$ on $\HFLs(H_1,\spinc_1)$. The quotient map $\HFLs(H_2,\spinc_2)\ra \HFLs(H_2,\spinc_2)/K_2$ factors through $\HFLs(Y_2,\Link_2,\spinct|_{Y_2})$ and that the inclusion $\Ker(K_1)\ra \HFLs(H_1,\spinc_1)$ factors through $\HFLs(Y_1,\Link_1,\spinct|_{Y_1})$. Therefore, we obtain the induced map 
\begin{align*}
	\fmap^\star_{H,\spinct}:\HFLs(Y_2,\Link_2,\spinct|_{Y_1})\ra \HFLs(Y_1,\Link_1,\spinct|_{Y_1}),
\end{align*} 
which is homogeneous with respect to the homological  bi-grading. If  the second homology group of each $Y_i$ is trivial,  the bi-degree of $f^\star_{H,\spinct}$ with respect to  the bigradings $(\grw,\grz)$ on the two sides may be computed (see \cite[Theorem 1.4]{Zemke-2}), and is given by  $(d_{X}^\spinct,d_{X}^{\spinct-\PD[\Surface]}-\chi(\Surface))$, 
where 
\begin{align*}
	d_{X}^\spinct:=\frac{c_1(\spinct)^2-2\chi(X)-3\sigma(X)}{4}.
\end{align*}

Since the action of $\wedgews$ is respected by $\fmap^\star_{H,\spinct}$,
the map $\fmap^\star_{H,\spinct}$ induces a well-defined $\Rings$-homomorphism
\begin{align*}
	\gmap^\star_{H,\spinct}:\HFKs(Y_2,L_2,\spinct|_{Y_2})\ra \HFKs(Y_1,L_1,\spinct|_{Y_1}).	
\end{align*}	
The following theorem follows from  \cite[Theorem~1.2]{AE-2}, or alternatively \cite[Theorem~A]{Zemke-1}, and their proofs:
\begin{thm}\label{thm:invariance-decorated-case}
	Let $H$ 	be a nice $\spinct$-admissible  trisection diagram  compatible with the decorated cobordism $(X,\TSurface):(Y_2,\Link_2)\ra (Y_1,\Link_1)$. 
	Then for $\star\in\{-,\circ,\wedge\}$, the $\Abb^\star$-homomorphism
\begin{align*}
\fmap^\star_{H,\spinct}:\HFLs(H_2,\spinct|_{Y_{H_2}})\ra \HFLs(H_1,\spinct|_{Y_{H_1}})
\end{align*} 
factors through well-defined  $\Abb^\star$-homomorphisms
\begin{align*}
&\fmap_{X,\TSurface,\spinct}^\star=\fmap_{X,\TSurface,\spinct;H}^\star:\HFLs(Y_2,\Link_2,\spinct|_{Y_2})\ra \HFLs(Y_1,\Link_1,\spinct|_{Y_1})\quad\text{and}\\
&\gmap_{X,\TSurface,\spinct}^\star=\gmap_{X,\TSurface,\spinct;H}^\star:\HFKs(Y_2,L_2,\spinct|_{Y_2})\ra \HFKs(Y_1,L_1,\spinct|_{Y_1})
\end{align*} 
which remain invariant under slide diffeomorphisms and stabilizations of the trisection diagram $H$ and are thus  invariants of the decorated cobordism $(X,\TSurface)$ and $\spinct\in\SpinC(X)$.
\end{thm}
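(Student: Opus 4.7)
The plan is to break the proof into two independent tasks: (a) show that, at the level of a single diagram, the chain map $f_{H,\spinct}$ descends to homomorphisms out of $\HFLs(Y_2,\Link_2,\spinct|_{Y_2})$ and $\HFKs(Y_2,L_2,\spinct|_{Y_2})$ with the correct targets, and (b) prove that the resulting homology-level homomorphisms are unchanged when $H$ is modified by the moves that generate the equivalence relation among trisection diagrams compatible with a fixed $(X,\TSurface)$. Task (a) is essentially already established in the discussion preceding the theorem. Lemma~\ref{lem:homology-action-interactions} says that $\fmap^\star_{H,\spinct}$ commutes with both the homology action of any $[\gamma]\in H_1(\HSurf;\Fbb)$ and with each basepoint action $\Phi^\star_w$. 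The first commutation implies (as spelled out before the theorem) that the map vanishes on the submodule generated by $K_2=\Ker(\imath_2)$ and has image annihilated by $K_1$; since the quotient of $\HFLs(H_2,\spinc_2)$ by the $K_2$-action is precisely $\HFLs(Y_2,\Link_2,\spinct|_{Y_2})$ and the $K_1$-kernel inside $\HFLs(H_1,\spinc_1)$ is $\HFLs(Y_1,\Link_1,\spinct|_{Y_1})$, we obtain the map $\fmap^\star_{X,\TSurface,\spinct}$. The second commutation in Lemma~\ref{lem:homology-action-interactions} shows that $\fmap^\star_{X,\TSurface,\spinct}$ sends $\bigcap_{w\in\wpoint}\Ker\Phi^\star_w=\HFKs(Y_2,L_2,\spinct|_{Y_2})$ into the corresponding kernel $\HFKs(Y_1,L_1,\spinct|_{Y_1})$, which defines the restriction $\gmap^\star_{X,\TSurface,\spinct}$.

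For task (b), by Theorem~\ref{thm:uniqueness-of-trisection} it suffices to establish invariance under (i) isotopies and handleslides of each cut system $\alphas_i$ away from the basepoints, and (ii) $j$-stabilizations of $H$ for every $j\in\Z/3$; $\spinct$-admissibility can always be recovered by isotopy of the cut systems. For (i), one applies the standard pseudo-holomorphic quadrilateral technique: given a slide-equivalent replacement $\alphas'_i$ of $\alphas_i$, one forms an auxiliary diagram with four cut systems, produces a chain homotopy from counts of Maslov-index-$(-1)$ quadrilaterals, and invokes an associativity argument for the triangle count. The only novelty over the closed-manifold setup is that the basepoints $\wpoint,\zpoint$ must stay in the same components throughout, which is exactly the meaning of slide equivalence away from the basepoints; since the formula (\ref{eq:map-definition}) only weighs the domains by multiplicities at these points, the proof in \cite{AE-2} carries over verbatim.

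For (ii), the task is to compare $\fmap^\star_{H,\spinct}$ with $\fmap^\star_{H\#_pH^j_s,\spinct}$. One analyses the three model diagrams $H^j_s$ of Figure~\ref{fig:i-stabilization} separately. Each $H^j_s$ has, in every $\SpinC$-component of interest, a unique top generator for the relevant $\alphas_a\cap\alphas_b$ intersection, and the triangle moduli spaces in the connect-sum region are modelled by a small holomorphic triangle class whose count is $1$. A neck-stretching / connect-sum argument then shows that, up to the appropriate extension of $\spinct$ to the stabilised cobordism, the stabilised triangle map equals the tensor product of $\fmap^\star_{H,\spinct}$ with an isomorphism of the model complexes. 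This is the most technical step, already carried out in \cite[Theorem~7.2]{AE-2} and  \cite[Theorem~A]{Zemke-1}; its main subtlety is ensuring that for a $0$-stabilisation the replacement of $\x_0^{top}$ for $H_0$ by the top generator for $(H\#_pH^0_s)_0$ behaves correctly, which is a finite, explicit calculation on a genus-$1$ diagram.

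Combining (a) and (b) yields maps depending only on the equivalence class of $H$, hence by Theorem~\ref{thm:uniqueness-of-trisection} only on $(X,\TSurface,\spinct)$, as claimed. The hardest single step is the stabilisation comparison in (ii); once that is in place, the factoring and the invariance under slides are routine adaptations of closed-manifold Heegaard-Floer arguments.
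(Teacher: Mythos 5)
Your proposal takes essentially the same route as the paper: the descent to $\HFLs(Y_2,\Link_2,\spinct|_{Y_2})$ and $\HFKs(Y_2,L_2,\spinct|_{Y_2})$ is handled exactly as in the paragraph preceding the theorem via Lemma~\ref{lem:homology-action-interactions}, and the invariance under slide diffeomorphisms and stabilizations is then deferred, just as the paper does, to \cite[Theorem~1.2]{AE-2} (equivalently \cite[Theorem~A]{Zemke-1}) together with Theorem~\ref{thm:uniqueness-of-trisection} to reduce to those moves. One small imprecision in part (a): you assert that $\HFLs(H_2,\spinc_2)/K_2$ \emph{is precisely} $\HFLs(Y_2,\Link_2,\spinct|_{Y_2})$ and that the $K_1$-kernel \emph{is} $\HFLs(Y_1,\Link_1,\spinct|_{Y_1})$. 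In general $K_i=\Ker(\imath_i)$ can be strictly larger than the span of the stabilization classes, since passing from $Y_{H_i}$ to $X_H$ may also kill classes in $H_1(Y_i)$ itself; the paper is careful to say only that the quotient map \emph{factors through} $\HFLs(Y_2,\Link_2,\spinct|_{Y_2})$ and that the inclusion of $\Ker(K_1)$ \emph{factors through} $\HFLs(Y_1,\Link_1,\spinct|_{Y_1})$. The conclusion is unaffected — you only need $\fmap^\star_{H,\spinct}$ to annihilate the submodule generated by the stabilization classes (contained in $K_2\cdot\HFLs(H_2,\spinc_2)$) and to land in the kernel of the stabilization classes on the target (which contains $\Ker(K_1)$) — but the equalities as you phrased them do not hold in general.
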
	

Theorem~\ref{thm:invariance-decorated-case} implies that there is no confusion in denoting 
$\fmap_{X,\TSurface,\spinct;H}^\star$ and $\gmap_{X,\TSurface,\spinct;H}^\star$ by $\fmap_{X,\TSurface,\spinct}^\star$ and $\gmap_{X,\TSurface,\spinct}^\star$, respectively. Therefore, we may conveniently drop the trisection diagram from the notation when the reference to it is not needed.\\ 

Let $(Y,\Link)$ denote a pointed link, and $K$ denote a component of the underlying link $L$, which contains the basepoints $\wpt_1,\zpt_1,\wpt_2,\zpt_2,\ldots,\wpt_k,\zpt_k$ in this order, with $\wpt_j\in\wpoint$ and $\zpt_j\in\zpoint$ for $j=1,\ldots,k$. Set $(X,\Surface)=(Y,L)\times [0,1]$ and assume that $\SCob=\SCob_K$ is a decoration of $\Surface=L\times [0,1]$, which is trivial except on $K\times [0,1]$, where its diving set consists of parallel arcs which partition $K\times [0,1]$ to $2k$ strips such that $(\wpt_j,0)$ is in the same strip as $(\wpt_{j+1},1)$ and $(\zpt_j,0)$ is in the same strip as $(\zpt_{j+1},1)$. This cobordism is called the {\emph{twist cobordism}} associated with $\Link$ and $K$, and is illustrated in Figure~\ref{fig:Twist-Cobordism}. The cobordism map associated with $(Y\times [0,1],\SCob_K)$ is computed by Zemke \cite{Zemke-3}. The following formula, is a reformulation of Zemke's result.

\begin{figure}
	\def\svgwidth{0.38\textwidth}
	{\footnotesize{
			\begin{center}
				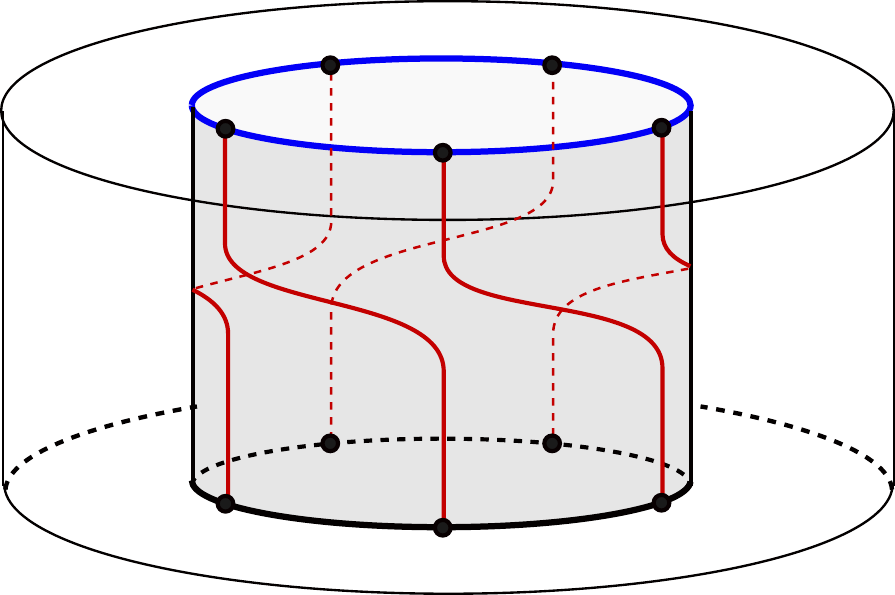
	\end{center}}}
	\caption{The underlying link cobordism of a  twist cobordism is the trivial (product) cobordism.}
	\label{fig:Twist-Cobordism}
\end{figure}

\begin{thm}[Theorem D from \cite{Zemke-3}]\label{thm:twist-formula}
Let $(Y,\Link)$, $K\subset L$, the basepoints $\wpt_1,\ldots,\wpt_k\in\wpoint$ and $\zpt_1,\ldots,\zpt_k\in \zpoint$, and the twist cobordism $(X=Y\times[0,1],\SCob_{\Link,K}):(Y,\Link)\ra (Y,\Link)$ be as above. Then for $\star\in\{-,\circ,\wedge\}$ we have 
\begin{align*}
\fmap^\star_{X,\SCob_{\Link,K},\spinc}=\sum_{I=\{j_1<\cdots<j_m\}\subset\{1,\ldots,k\}}	\Psi^\star_{\zpt_{j_1}}\circ\Phi^\star_{\wpt_{j_1}}\circ\Psi^\star_{\zpt_{j_2}}\circ\Phi\star_{\wpt_{j_2}}\circ\cdots\circ 
\Psi^\star_{\zpt_{j_m}}\circ\Phi^\star_{\wpt_{j_m}}.
\end{align*}	
\end{thm}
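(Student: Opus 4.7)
The plan is essentially a translation of Zemke's Theorem D from \cite{Zemke-3} into the notation of the present paper. The twist cobordism $(X, \SCob_{\Link,K})$ is, up to a decoration-preserving diffeomorphism of the underlying product link cobordism, precisely the decorated cobordism whose induced map Zemke computes in his Theorem D. So I would first decompose $\SCob_{\Link,K}$ as a composition of $k$ elementary twist cobordisms $\SCob_1, \SCob_2, \ldots, \SCob_k$, where $\SCob_j$ has dividing set nontrivial only in a small collar around the $j$-th basepoint pair and swaps $(\wpt_j, \zpt_j)$ with the next pair in the cyclic order. Functoriality from Theorem~\ref{thm:invariance-decorated-case} then gives
\begin{align*}
\fmap^\star_{X, \SCob_{\Link,K}, \spinc}
= \fmap^\star_{X, \SCob_1, \spinc} \circ \fmap^\star_{X, \SCob_2, \spinc} \circ \cdots \circ \fmap^\star_{X, \SCob_k, \spinc}.
\end{align*}

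Next, I would evaluate each elementary factor using Zemke's local formula, namely $\fmap^\star_{X, \SCob_j, \spinc} = \Id + \Psi^\star_{\zpt_j} \circ \Phi^\star_{\wpt_j}$. This elementary identity is the core content of \cite{Zemke-3}, and may be reconstructed using the quasi-stabilization maps $S^\pm_{w,z}, S^\pm_{z,w}$ together with the relations (1)--(4) listed in the proof of Proposition~\ref{prop:weak-HF}. Substituting these elementary formulas into the composition and expanding
\begin{align*}
\prod_{j=1}^{k} \big( \Id + \Psi^\star_{\zpt_j} \circ \Phi^\star_{\wpt_j} \big)
\end{align*}
(with composition ordered by increasing $j$ from the left) as a sum over subsets $I = \{j_1 < \cdots < j_m\} \subset \{1, \ldots, k\}$ then produces exactly the right-hand side of the claimed formula: each subset $I$ selects, from each factor, either $\Id$ or the term $\Psi^\star_{\zpt_j} \circ \Phi^\star_{\wpt_j}$.

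The main obstacle is the bookkeeping in the elementary step: one must check the correct order of $\Psi_{\zpt_j}$ and $\Phi_{\wpt_j}$ within each factor (namely, that $\Phi$ acts first), confirm that the decomposition of $\SCob_{\Link,K}$ is oriented so that $\SCob_1$ sits at the left end of the composition, and match $\SpinC$-restrictions across the elementary pieces. Since the factors $\Id + \Psi^\star_{\zpt_j} \circ \Phi^\star_{\wpt_j}$ do not in general commute---Proposition~\ref{prop:point-action-independence} shows that $\Psi^\star$ and $\Phi^\star$ fail to commute at neighboring basepoints---the ordering of the product must be preserved carefully throughout the expansion. Once these conventions are fixed, the remainder of the argument is purely formal.
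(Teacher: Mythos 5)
The paper itself does not prove this statement; the text preceding the theorem explicitly says ``The cobordism map associated with $(Y\times [0,1],\SCob_K)$ is computed by Zemke \cite{Zemke-3}. The following formula, is a reformulation of Zemke's result,'' and the theorem is headed ``Theorem D from \cite{Zemke-3}.'' So there is no in-paper argument against which to compare; you are effectively being asked to reconstruct Zemke's proof, and your reconstruction has a concrete gap in the decomposition step.

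The gap is this. The twist cobordism $\SCob_{\Link,K}$ is, as a decorated self-cobordism of $(Y,\Link)$, a cyclic shift of the $k$ marked points by one position: the arc of $\Path$ emanating from $p_j$ at the bottom ends at $p_{j+1}$ at the top, so the induced permutation of markings is the $k$-cycle $(1\,2\,\cdots\,k)$. You propose to decompose $\SCob_{\Link,K}$ as a composition of $k$ ``elementary'' decorated cobordisms $\SCob_1,\ldots,\SCob_k$, each swapping the $j$-th pair with the next one in the cyclic order, i.e., realizing the adjacent transposition $(j\; j{+}1)$. But a $k$-cycle has sign $(-1)^{k-1}$, while a composition of $k$ transpositions has sign $(-1)^{k}$; these disagree for every $k$, so no such decomposition can exist. (For $k=2$ the issue is already visible: two adjacent transpositions compose to the identity permutation, not the swap that the twist realizes.) The alternative reading of your ``elementary piece'' --- namely moving a single pair once around $K$ past all the others --- composes to the \emph{full} rotation, i.e., $\SCob_{\Link,K}^{\,\circ k}$, not to $\SCob_{\Link,K}$ itself, and again does not match. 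There is a second, related problem: the identity $\fmap^\star_{X,\SCob_j,\spinc}=\Id+\Psi^\star_{\zpt_j}\circ\Phi^\star_{\wpt_j}$ that you invoke as ``Zemke's local formula'' is the $k=1$ basepoint-moving formula (a full loop with a single pair present); you have not given any argument that it is also the cobordism map of an adjacent swap among $k>1$ pairs, and since swapping twice is the identity decoration while $(\Id+\Psi_j\Phi_j)^2\ne\Id$ when $[\Phi_{\wpt_j},\Psi_{\zpt_j}]=\Id$ (which holds by Proposition~\ref{prop:point-action-independence} whenever $k>1$), that identification cannot be correct as stated.

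What you have gotten right: the final algebraic manipulation --- expanding the ordered product $(\Id+\Psi^\star_{\zpt_1}\Phi^\star_{\wpt_1})\circ\cdots\circ(\Id+\Psi^\star_{\zpt_k}\Phi^\star_{\wpt_k})$ into the sum over subsets $I=\{j_1<\cdots<j_m\}$, keeping the factors in increasing order of $j$ since they do not commute --- is correct and does reproduce the stated right-hand side. And the overall spirit (pass to the quasi-stabilization formalism and use relations (1)--(4) of Proposition~\ref{prop:weak-HF}) is indeed the spirit of \cite{Zemke-3}. But to make this a proof you would need to replace the transposition decomposition by the actual local model Zemke uses (which manipulates one quasi-stabilizing pair at a time and tracks the resulting $S^\pm$ maps across a full revolution), or else simply cite \cite[Theorem D]{Zemke-3} outright, as the paper does.
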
	

\begin{thm}\label{thm:twist-invariance}
Let $(Y,\Link)$ be a  pointed link with underlying link $L$, $K$	be a component of $\Link$,  $(X,\SCob_{\Link,K})$ denote the twist cobordism associated with $\Link$ and $K$ and $\spinc$ be a fixed $\SpinC$ structure on $Y$. Then
\[\gmap^\star_{X,\SCob_{\Link,K},\spinc}=Id:\HFKs(Y,\Link,\spinc)\ra \HFKs(Y,\Link,\spinc).\]
\end{thm}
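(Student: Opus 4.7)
The plan is to observe that this theorem is an immediate formal consequence of Theorem~\ref{thm:twist-formula} combined with the very definition of $\HFKs$. Recall from Definition~\ref{defn:weak-HF} that $\HFKs(Y,L,\spinc)$ is the common kernel of the basepoint action maps $\Phi^\star_w$ for all $w\in\wpoint$. By Lemma~\ref{lem:homology-action-interactions}, the cobordism map $\fmap^\star_{X,\SCob_{\Link,K},\spinc}$ commutes with each $\Phi^\star_w$, so it preserves this common kernel, and by construction $\gmap^\star_{X,\SCob_{\Link,K},\spinc}$ is precisely the restriction of $\fmap^\star_{X,\SCob_{\Link,K},\spinc}$ to $\HFKs(Y,L,\spinc)$.

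With that in place, I would simply substitute the explicit formula supplied by Theorem~\ref{thm:twist-formula} and evaluate term by term on an arbitrary class $\x\in\HFKs(Y,L,\spinc)$. The sum runs over all subsets $I=\{j_1<\cdots<j_m\}\subset\{1,\ldots,k\}$. The summand corresponding to $I=\emptyset$ is an empty composition, which by convention equals the identity and therefore sends $\x$ to $\x$. Every summand with $|I|\geq 1$ is a composition whose rightmost factor is $\Phi^\star_{\wpt_{j_m}}$, and since $\wpt_{j_m}\in\wpoint$ we have $\Phi^\star_{\wpt_{j_m}}(\x)=0$; hence each such summand annihilates $\x$. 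Adding the contributions yields $\gmap^\star_{X,\SCob_{\Link,K},\spinc}(\x)=\x$, as required.

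The whole proof thus reduces to a single line once Theorem~\ref{thm:twist-formula} is invoked, and there is no substantive obstacle to overcome. The only points worth flagging are the interpretation of the empty-set term as the identity (standard) and the fact that each $\Phi^\star_{\wpt_j}$ appearing in the formula is indexed by a point of $\wpoint$ rather than some other auxiliary set, so that membership in $\HFKs$ genuinely forces it to kill $\x$ — both of which are transparent from the definitions.
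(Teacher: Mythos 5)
Your proof is correct and takes essentially the same route as the paper: invoke Theorem~\ref{thm:twist-formula}, note that the $I=\emptyset$ term is the identity, and observe that every other summand ends (on the right) with $\Phi^\star_{\wpt_{j_m}}$, which kills any $\x$ in the common kernel defining $\HFKs$. The only difference is that you explicitly justify that $\gmap^\star$ is the restriction of $\fmap^\star$ to the kernel subgroup, which the paper takes for granted from the earlier discussion around Theorem~\ref{thm:invariance-decorated-case}.
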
	
\begin{proof}
Given $\x\in\HFKs(Y,\Link,\spinc)\subset \HFLs(Y,\Link,\spinc)$, it follows that $\x$ is in the kernel of the action of all basepoint maps $\Phi^\star_w$ for $w\in\wpoint$. Therefore, Theorem~\ref{thm:twist-formula} implies that 	$\gmap^\star_{X,\SCob_{\Link,K},\spinc}(\x)=\x$, as the only non-zero term contributing to the sum corresponds to $m=0$ and $I=\emptyset$.
\end{proof}

\subsection{The concordance TQFTs}\label{subsec:perturbation-invariance}
Let us assume that $H=(\HSurf,\alphas_0,\alphas_1,\alphas_2,\wpoint,\zpoint)$ is a nice and $\spinct$-admissible trisection diagram determining  the decorated cobordism (equipped with $\SpinC$ structures) 
\begin{align*}
(X,\TSurface,\spinct):(Y_2,\Link_2,\spinct|_{Y_2})\ra (Y_1,\Link_1,\spinct|_{Y_1}). 
\end{align*}
Choose a $2$-sphere $\HSurf'$ and a triple of oriented great circles $\alpha_0,\alpha_1$ and $\alpha_2$, so that each $\alpha_i$ and $\alpha_j$ intersect in a pair of points, and together they decompose $\HSurf'$ into $8$ triangles. Each $\alpha_i$ decomposes $\HSurf'$ into two disks $A_i$ and $B_i$, where $\alpha_i$ is oriented as the boundary of $A_i$, for $i\in\Z/3$. We further assume that $A_0\cap A_1\cap A_2$ is the (positively oriented) triangle which determines the domain of a triangle class for $(\HSurf',\alpha_0,\alpha_1,\alpha_2)$. Choose the eight basepoints $\wpt,\wpt',\zpt_i,\zpt'_i$, for $i\in\Z/3$ such that 
 \[w\in A_0\cap A_1\cap A_2,\quad w'\in B_0\cap B_1\cap B_2,\quad
 \zpt_i\in  B_i\cap A_{i-1}\cap A_{i+1}\quad\text{and}\quad \zpt'_i\in A_i\cap B_{i-1}\cap B_{i+1}. \]
 Set $\wpoint'=\{w,w'\}$ and $\zpoint'=\{\zpt_0,\zpt_0'\}$ and $H'=(\HSurf',\alpha_0,\alpha_1,\alpha_2,\wpoint',\zpoint')$. We choose a basepoint $z\in \zpoint$, remove the disk neighborhoods $D_z$ and $D_{w'}$ of $z\in \HSurf$ and $w'\in \HSurf'$ respectively, and connect the circle boundaries by a tube of length $\ell$, which is chosen sufficiently large. This gives a surface $\overline{\HSurf}=\HSurf\#_{z,w'}\HSurf'$, which has the same genus $g$ as $\HSurf$. A neighborhood of $\HSurf'-D_{w'}$ in $\overline{\HSurf}$ is illustrated in Figure~\ref{fig:Stabilization-HD}. Let us define 
\begin{align*}
	&\oH=(\overline\HSurf,\oalphas_0,\oalphas_1,\oalphas_2,\oW=\wpoint\cup\wpoint'-\{w'\}, \oZ=\zpoint\cup\zpoint'-\{z\})=H\#_{z,w'} H',\quad\text{where}\quad
\oalphas_i=\alphas_i\cup\{\alpha_i\}.
\end{align*} 
The nice trisection diagram  $\overline{H}$ is then a trisection diagram compatible with  a $1$-perturbation of $(X,\TSurface)$. The belt of the tube which connects $\HSurf$ to $\HSurf'$ is a closed curve $\gamma$, which is determined by a dashed closed curve in Figure~\ref{fig:Stabilization-HD}. 
Let us label the intersection points as in Figure~\ref{fig:Stabilization-HD}. Every   generator $\x_i\in \Sbf(H_i)$  and  every $\epsilon\in\{0,1\}$ give a generator 
$\x_i^\epsilon=\x_i\cup \{x_i^\epsilon\}\in\Sbf(\oH_i)$, so that $\grw(\x_i^\epsilon)=\grw(\x_i)-\epsilon$.
Moreover, we have  
 \begin{align*}
 	d^{\oH}_i\big(\x_i^\epsilon\big)=\big(d^H_i(\x_i)\big)^{\epsilon},\quad\quad\forall\ \  \x_i\in\Sbf(H_i),\  \epsilon\in\{0,1\}.  
 \end{align*}	
 	The following theorem follows from either of \cite[Proposition 6.2]{MO-integer-surgery}, \cite[proof of Theorem 7.3]{AE-2},  or \cite[Proposition 4.28]{Zemke-1}.
 
\begin{figure}
	\def\svgwidth{0.6\textwidth}
	{\footnotesize{
			\begin{center}
				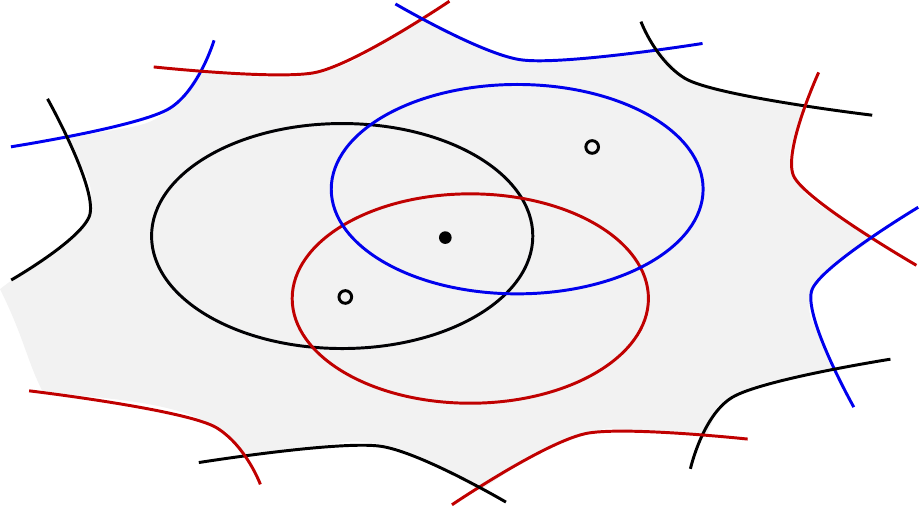
	\end{center}}}
	\caption{A simple perturbation of a trisection diagram.}
	\label{fig:Stabilization-HD}
\end{figure}  

\begin{thm}\label{thm:perturbation-formula}
Let us assume that the decorated cobordism $(X,\TSurface'):(Y_2,\Link_2')\ra (Y_1,\Link_1')$ is obtained from $(X,\TSurface):(Y_2,\Link_2)\ra (Y_1,\Link_1)$ by a simple perturbation. Fix $\spinct\in\SpinC(X)$ and  construct the $\spinct$-admissible trisection diagram $\oH$ for $(X,\TSurface')$ from the $\spinct$-admissible trisection diagram $H$ for $(X,\TSurface)$ as above. If the tube length $\ell$ is sufficiently large,  we have 
	\begin{align}\label{eq:simple-perturbation-map}
		f_{\oH,\spinct}(\x_2^\epsilon))=\left(f_{H,\spinct}(\x_2)\right)^\epsilon,
	\quad\quad\forall\ \ \x_2\in\Sbf(H_2)\ \text{and}\  \epsilon\in\{0,1\}.
	\end{align}
Therefore, 	for $\star\in\{-,\circ,\wedge\}$ the  induced maps 
\begin{align*}
\gmap^\star_{X,\SCob,\spinct},\gmap^\star_{X,\SCob',\spinct}:
\HFKs(Y_2,L_2,\spinct|_{Y_2})\ra 	\HFKs(Y_1,L_1,\spinct|_{Y_1})
\end{align*}
are identical, while 
\begin{align*}
	\fmap^\star_{X,\SCob',\spinct}:
	\HFLs(Y_2,\Link'_2,\spinct|_{Y_2})&=\HFLs(Y_2,\Link_2,\spinct|_{Y_2})\otimes_{\Abb^\star} \big(\Abb^\star\oplus\Abb^\star\llb1,-1\rrb\big)\\
	&\ra 		\HFLs(Y_1,\Link_1,\spinct|_{Y_1})\otimes_{\Abb^\star} \big(\Abb^\star\oplus\Abb^\star\llb1,-1\rrb\big)=\HFLs(Y_1,\Link'_1,\spinct|_{Y_1})
\end{align*}	
is identified as $\fmap_{X,\SCob,\spinct}^\star\otimes Id$. Here, the shift $\llb1,-1\rrb$ is made with respect to the bi-grading $(\grw,\grz)$.
\end{thm}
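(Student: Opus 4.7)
The plan is to deduce the formula by a standard neck-stretching argument: for sufficiently large tube length $\ell$, index-$0$ holomorphic triangles on $\oH$ decompose, via a gluing theorem, into pairs consisting of a triangle on $H$ and a triangle on the sphere $H'$. I would first invoke the degeneration results from \cite{MO-integer-surgery}, \cite[proof of Theorem 7.3]{AE-2}, or \cite[Proposition 4.28]{Zemke-1} to obtain, for $\ell \gg 0$, a bijection between index-$0$ triangle classes on $\oH$ connecting $\x_0^{top,\epsilon_0}\times \x_1^{\epsilon_1}\times \x_2^{\epsilon_2}$ and pairs $(\Delta_H,\Delta_{H'})$, where $\Delta_H$ is an index-$0$ triangle on $H$ connecting $\x_0^{top},\x_1,\x_2$, and $\Delta_{H'}$ is an index-$0$ triangle on $H'$ connecting $x_0^{\epsilon_0}, x_1^{\epsilon_1}, x_2^{\epsilon_2}$, with the holomorphic count factoring as the product of the two counts.

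Second, I would carry out the purely local computation on $H'$. Since $H'$ is a $2$-sphere cut by three great circles into eight triangular regions, the index-$0$ triangle classes between the possible generators $x_0^{\epsilon_0}\times x_1^{\epsilon_1}\times x_2^{\epsilon_2}$ can be enumerated directly from the domain combinatorics. The key claim is that only the diagonal choices $\epsilon_0=\epsilon_1=\epsilon_2=\epsilon$ support an index-$0$ triangle class, each such class has a unique holomorphic representative (the small positive triangle near $w$ for $\epsilon=0$, or the antipodal one near $w'$ for $\epsilon=1$), and its contribution to $\asf$ computed using the basepoint set $\oW,\oZ$ of $\oH$ equals $1$ for each $\epsilon\in\{0,1\}$. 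The crucial observation here is that $w'$ and $z$ are no longer basepoints on $\oH$, so their multiplicities in $\Delta_{H'}$ and $\Delta_H$ drop out of $\asf$; combined with the fact that the relevant representatives on the sphere avoid the remaining $H'$-basepoints $\{w,z_0,z_0'\}$ in a complementary pattern, this forces the local contribution to be exactly $1$. Combining the two steps with equation (\ref{eq:map-definition}) yields (\ref{eq:simple-perturbation-map}).

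The remaining assertions then follow quickly from (\ref{eq:simple-perturbation-map}). Restricting to $\HFKs(Y_2,L_2,\spinct|_{Y_2})\subset\HFLs(Y_2,\Link_2,\spinct|_{Y_2})$ via Definition~\ref{defn:weak-HF} and using Proposition~\ref{prop:weak-HF} to identify the kernel subgroups for $\Link_2$ and $\Link'_2$, equation (\ref{eq:simple-perturbation-map}) identifies $\gmap^\star_{X,\SCob',\spinct}$ with $\gmap^\star_{X,\SCob,\spinct}$. For the tensor-product description of $\fmap^\star_{X,\SCob',\spinct}$, one identifies $\HFLs(Y_j,\Link'_j,\spinct|_{Y_j})\cong \HFLs(Y_j,\Link_j,\spinct|_{Y_j})\otimes V$, with $V=\Abb^\star\oplus\Abb^\star\llb 1,-1\rrb$ corresponding to the two choices $\epsilon\in\{0,1\}$ (the shift $\llb 1,-1\rrb$ encoded by $\grw(\x^\epsilon)=\grw(\x)-\epsilon$); under this identification (\ref{eq:simple-perturbation-map}) reads $\fmap^\star_{X,\SCob',\spinct}=\fmap^\star_{X,\SCob,\spinct}\otimes\Id_V$.

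The main obstacle is the bookkeeping in the second step: correctly classifying which triangle classes on the sphere have Maslov index $0$, and verifying that after excising the multiplicities at $w'$ and $z$ (which have ceased to be basepoints of $\oH$), the combined contribution to $\asf$ is exactly $1$ rather than a residual power of $\wvar$ or $\zvar$. This is precisely why the geometric positions of the eight basepoints $\{w,w',z_i,z_i'\}$ on $H'$ were chosen so carefully in the setup preceding the theorem.
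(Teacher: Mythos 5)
The paper itself gives no self-contained proof here: the sentence immediately preceding the theorem cites \cite[Proposition 6.2]{MO-integer-surgery}, \cite[proof of Theorem 7.3]{AE-2}, or \cite[Proposition 4.28]{Zemke-1} for equation~\eqref{eq:simple-perturbation-map}. Your neck-stretching/gluing strategy is the same one used in those references, so on the level of method you are aligned with the paper; and your deduction of the remaining parts of the theorem from~\eqref{eq:simple-perturbation-map} is fine.

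However, your local analysis on the sphere $H'$ is wrong in two places, and as written it would not reproduce~\eqref{eq:simple-perturbation-map}. First, the ``diagonal'' claim. The map $f_{\oH,\spinct}$ is defined by inserting the top generator of $\HFLm(\oH_0,\spinc_0)$, which on the chain level is $(\x_0^{top})^0$; that is, $\epsilon_0=0$ is pinned down, since $\grw(\x_0^\epsilon)=\grw(\x_0)-\epsilon$. The identity $f_{\oH,\spinct}(\x_2^1)=(f_{H,\spinct}(\x_2))^1$ therefore requires index-$0$ triangle classes on $H'$ connecting $x_0^0$, $x_1^1$, $x_2^1$, i.e.\ $(\epsilon_0,\epsilon_1,\epsilon_2)=(0,1,1)$, which is not diagonal. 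If only $\epsilon_0=\epsilon_1=\epsilon_2$ contributed you would get $f_{\oH,\spinct}(\x_2^1)=0$ rather than $(f_{H,\spinct}(\x_2))^1$. The correct constraint is $\epsilon_1=\epsilon_2$ with $\epsilon_0=0$ fixed by the top generator; and ``the antipodal triangle near $w'$'' cannot serve as a stand-alone local representative, since $w'$ is the connected-sum point. Second, and more substantively, your ``multiplicities at $z,w'$ drop out'' observation cuts against the formula rather than for it. On the right-hand side of~\eqref{eq:simple-perturbation-map}, $\asf(\Delta)$ is computed with the $H$-basepoints, which still include $z$, so it carries the factor $\zvar^{n_z(\Delta)}$; on the left, $z\notin\oZ$. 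For the $\zvar$-exponents to agree, the matched sphere triangle $\Delta_{H'}$ with $n_{w'}(\Delta_{H'})=n_z(\Delta_H)=k$ must satisfy $n_{\zpt_0}(\Delta_{H'})+n_{\zpt_0'}(\Delta_{H'})=k$ and $n_w(\Delta_{H'})=0$, and this must hold for every $k\geq 0$, not only $k=0$. Your sketch, a unique small representative on the sphere avoiding $w,\zpt_0,\zpt_0'$, only covers $k=0$ and says nothing about the triangles $\Delta_H$ that cross $z$. This is exactly the delicate content of the cited degeneration results and is the point your proposal leaves unaddressed.
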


If $(X,\Surface):(Y,L)\ra (Y',L')$ is a link concordance (i.e. a union of cylinders connecting two links with the same number of components),  we may fix a simple decoration $\SCob$ of $\Surface$ which consists of an arc on each cylinder which connects the two boundary components. Correspondingly, for every $\SpinC$ structure $\spinct\in\SpinC(X)$ we may define the Heegaard-Floer map
\begin{align*}
	\gmap_{X,\Surface,\spinct;\SCob}^\star:=\gmap_{X,\SCob,\spinct}^\star:\HFKs(Y,K,\spinct|_Y)\ra \HFKs(Y',K',\spinct|_{Y'}),\quad\text{for}\ \star\in\{-,\circ,\wedge\}. 	 
\end{align*} 
\begin{thm}\label{thm:invariance-concordance}
Having fixed a concordance $(X,\Surface):(Y,L)\ra (Y',L')$ and a $\SpinC$ structure $\spinct\in\SpinC(X)$, 	the map $\gmap_{X,\SCob,\spinct}^\star$ is  independent of the choice of the decoration $\SCob$ of the surface $\Surface$. Therefore, we obtain the well-defined $\Abb^\star$-homomorphisms
	\begin{align*}
		\gmap_{X,\Surface,\spinct}^\star:\HFKs(Y,L,\spinct|_Y)\ra \HFKs(Y',L',\spinct|_{Y'}),\quad\text{for}\ \star\in\{-,\circ,\wedge\}.	
	\end{align*}	 
\end{thm}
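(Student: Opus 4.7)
The plan is to combine Theorem~\ref{thm:moving-between-decorations}, which relates any two decorations of a fixed surface, with the invariance properties of simple perturbations (Theorem~\ref{thm:perturbation-formula}) and of twist cobordisms (Theorem~\ref{thm:twist-invariance}). By Theorem~\ref{thm:moving-between-decorations}, any two decorations $\SCob$ and $\SCob'$ of $\Surface$ are related by a finite sequence of perturbations, deperturbations, and twists along the boundary components, so it suffices to verify that each of these elementary moves leaves the induced map on $\HFKs$ unchanged.

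The crucial simplification, specific to the concordance setting, is that every perturbation arising in such a sequence is automatically simple. Since $\Surface$ is a disjoint union of cylinders, the closed surface $\oS$ is a disjoint union of $2$-spheres, each containing exactly one vertex in $V^+$ (collapsed from a positive boundary circle) and one vertex in $V^-$ (from a negative boundary circle). Every polygonal cell of $\Cell$ has an even number of boundary edges with vertices alternating in color, so with only one vertex of each color available on each spherical component, every cell of $\Cell$ is forced to be a bigon. A perturbation, which splits a $2m$-gon into a $2k$-gon and a $2(m-k+1)$-gon, can therefore only be applied with $m=k=1$, splitting a bigon into two bigons by repeating a red edge; this is precisely a simple perturbation in the sense of Section~\ref{subsec:TD-are-related-by-perturbations}.

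With this observation in hand, the proof runs as follows. Theorem~\ref{thm:perturbation-formula} shows that each simple perturbation (and hence each simple deperturbation, by inversion) leaves the induced map on $\HFKs$ unchanged. A twist along a boundary component $K$ of $L$ or $L'$ is realized by pre- or post-composing $(X,\SCob,\spinct)$ with a twist cobordism of the form $(Y\times[0,1],\SCob_K)$ or $(Y'\times[0,1],\SCob_{K'})$; by Theorem~\ref{thm:twist-invariance} any such twist cobordism induces the identity on $\HFKs$, so the composition law implicit in Theorem~\ref{thm:invariance-decorated-case} ensures the composite map is unchanged. Combining these facts, the map $\gmap^\star_{X,\SCob,\spinct}$ is independent of the choice of decoration $\SCob$, yielding the desired well-defined $\Abb^\star$-homomorphism $\gmap^\star_{X,\Surface,\spinct}$.

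The main point to verify carefully is the bigon-cell observation of the second paragraph, which hinges entirely on the combinatorial fact that each spherical component of $\oS$ admits only two vertices. Without this reduction one would be forced to confront non-simple perturbations and the more intricate edge-switch analysis of Lemmas~\ref{lem:changing-deperturbed-cell-decomps} and~\ref{lem:rectangulations}; happily, for concordances this is unnecessary, and the argument reduces to a direct assembly of Theorems~\ref{thm:perturbation-formula} and~\ref{thm:twist-invariance}.
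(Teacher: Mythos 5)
Your proof is correct and follows the same strategy as the paper's, which likewise reduces the comparison of decorations to simple perturbations, simple deperturbations, and twists and then invokes Theorems~\ref{thm:perturbation-formula} and~\ref{thm:twist-invariance}; the paper simply states as an ``observation,'' without argument, the combinatorial fact that all perturbations of a concordance decoration are automatically simple, whereas you spell out why. The one place where your argument is slightly terse is the claim that ``with only one vertex of each color available on each spherical component, every cell of $\Cell$ is forced to be a bigon'': as written this does not immediately follow, since a priori a cell could be a $2m$-gon whose boundary visits the same blue-black vertex pair $m$ times. It does hold, either by an Euler-characteristic count (a spherical component with $2$ vertices and $k$ edges has $k$ faces, the total number of face-edge incidences is $2k$, and each face contributes at least $2$, forcing exactly $2$ per face) or by observing that the edges are properly embedded disjoint arcs in the annulus obtained by deleting the two vertices from the sphere, hence all parallel. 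With that one sentence added, the proof is complete and matches the paper's.
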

\begin{proof}
	Note that the $\Abb^\star$-homomorphism
	\begin{align*}
		\gmap_{X,\SCob,\spinc}^\star:\HFKs(Y,K,\spinct|_Y)\ra \HFKs(Y',K',\spinct|_{Y'}),\quad\text{for}\ \star\in\{-,\circ,\wedge\}	 
	\end{align*} 	
	may in fact be defined for any decoration $(X,\SCob)$ of the concordance $(X,\Surface)$ (which may include more than one arc on each cylindrical component).
	If the decoration $\SCob'$ of $\Surface$ is obtained from the decoration $\SCob$ by adding an arc on one of the cylinders, it  follows that $(X,\SCob')$ is a simple perturbation of $(X,\SCob)$, and  that
	$\gmap^\star_{X,\SCob',\spinct}=\gmap^\star_{X,\SCob,\spinct}$ by Theorem~\ref{thm:perturbation-formula}.	
	The proof now follows from Theorem~\ref{thm:twist-invariance} and the observation that every two decorations of a concordance may be changed two one another by a sequence of simple perturbations, simple deperturbations, and twists.
\end{proof}	

The outcome of the above discussion is that Heegaard Floer homology gives the TQFTs
\begin{align*}
\HFKs:\LinksC\ra \HFHstar,\quad\text{for}\ \ \star\in\{\circ,\wedge\},
\end{align*}	
which assigns the Heegaard Floer homology group 
\begin{align*}
\HFKs(Y,L)=\bigoplus_{\spinc\in\SpinC(Y)}\HFKs(Y,L,\spinc),\quad\star\in\{\circ,\wedge\},	
\end{align*}	
to an oriented link $(Y,L)$, and the $\Abb^\star$-homomorphism $\HFKs(X,\Surface)=\gmap_{X,\Surface}^\star$ to the a link cobordism $(X,\Surface):(Y,L)\ra (Y',L')$ which is the sum of the maps $\gmap_{X,\Surface,\spinct}^\star$ for  $\SpinC$ structures $\spinct\in\SpinC(X)$. \\

Zemke showed that knot Floer homology provides an obstruction to ribbon concordance \cite{Zemke-Ribbon}. The following theorem follows as a direct byproduct of his result.

\begin{thm}\label{thm:ribbon-concordance}
	With $X=Y\times [0,1]$, let $(X,\Surface):(Y,K_0)\ra (Y,K_1)$ be a ribbon concordance between the knots $K_0$ and $K_1$ in $Y$  and  $(X,\overline\Surface):(Y,K_1)\ra (Y,K_0)$ denote the reverse concordance (which is a co-ribbon concordance). Then
	\begin{align*}
		\gmap_{X,\overline\Surface}^\star\circ	\gmap_{X,\Surface}^\star=Id:
		\HFKs(Y,K_0)\ra  \HFKs(Y,K_0),\quad\text{for}\  \star\in\{-,\circ,\wedge\}.
	\end{align*}		
	In particular, $\gmap_{Y\times [0,1],\Surface}^\star$ is injective for every ribbon concordance $(Y\times[0,1],\Surface)$, while $\gmap_{Y\times [0,1],\Surface}^\star$ is surjective for every co-ribbon concordance $(Y\times[0,1],\overline\Surface)$. 
\end{thm}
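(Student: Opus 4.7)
The plan is to reduce the statement to Zemke's ribbon concordance theorem \cite{Zemke-Ribbon} for decorated link cobordism maps $\fmap^\star_{X,\SCob}$, and then transfer the conclusion to the undecorated maps $\gmap^\star_{X,\Surface}$ using the comparison results established earlier in the paper.

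First, I would fix a simple decoration $\SCob$ of the ribbon concordance $\Surface$ consisting of a single arc on the cylinder connecting a basepoint on $K_0$ to a basepoint on $K_1$, and equip $\overline\Surface$ with the reverse decoration $\overline\SCob$. Zemke's result then provides, for each $\spinct\in\SpinC(X)$, the identity
\begin{align*}
\fmap^\star_{X,\overline\SCob,\spinct}\circ\fmap^\star_{X,\SCob,\spinct}=Id_{\HFLs(Y,\Link_0,\spinct|_Y)},
\end{align*}
where $\Link_0$ denotes $K_0$ together with a single pair of basepoints. Summing over $\spinct$ produces the corresponding identity on $\HFLs(Y,\Link_0)$.

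Next, I would transfer this to the undecorated setting. For $\star=-$, Proposition~\ref{prop:HF-of-knots} identifies $\HFKm(Y,K_0)$ with $\HFLm(Y,\Link_0)$, and under this identification $\gmap^-_{X,\Surface}$ equals $\fmap^-_{X,\SCob}$, so the composition identity transfers verbatim. For $\star=\circ,\wedge$, Theorem~\ref{thm:main-intro-3} realizes $\gmap^\star_{X,\Surface}$ as the restriction of $\fmap^\star_{X,\SCob}$ to the submodule $\HFKs(Y,K_0)\subset\HFLs(Y,\Link_0)$ defined as the kernel of $\Phi^\star_w$; since $\fmap^\star$ commutes with $\Phi^\star_w$ by Lemma~\ref{lem:homology-action-interactions}, the kernel is preserved, and the restriction of the identity is the identity.

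The injectivity and surjectivity assertions are then formal: a map with a left inverse is injective and a map with a right inverse is surjective, so $\gmap^\star_{X,\Surface}$ is injective for ribbon concordances and $\gmap^\star_{X,\overline\Surface}$ is surjective for co-ribbon concordances. The main subtlety to verify is the compatibility of our decoration conventions with those in Zemke's paper, so that his ribbon-concordance identity applies to the decorated map $\fmap^\star_{X,\SCob}$ for the minimal decoration $\SCob$ chosen above; once this bookkeeping is in place the argument is otherwise immediate.
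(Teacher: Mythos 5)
Your proposal is correct and matches the paper's intent: the paper itself offers no proof beyond the sentence ``Zemke showed that knot Floer homology provides an obstruction to ribbon concordance; the following theorem follows as a direct byproduct of his result,'' and your argument supplies exactly the missing details — invoke Zemke's composition identity for a fixed decoration, observe that by Lemma~\ref{lem:homology-action-interactions} the maps $\fmap^\star$ preserve $\Ker\,\wedgews$, and note that the restriction of the identity to a preserved submodule is the identity. One small streamlining: the case distinction between $\star=-$ and $\star\in\{\circ,\wedge\}$ is unnecessary, since Proposition~\ref{prop:HF-of-knots} just says that for $\star=-$ the kernel is everything, so the uniform restriction argument already covers it; and the identification of $\gmap^\star_{X,\Surface}$ with the restriction of $\fmap^\star_{X,\SCob}$ to $\HFKs$, for an arbitrary choice of decoration $\SCob$, is really the content of Theorems~\ref{thm:invariance-decorated-case} and~\ref{thm:invariance-concordance} rather than Theorem~\ref{thm:main-intro-3} (the latter is the commutative-diagram statement for the hat theory only).
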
	

\section{The link cobordism TQFT}\label{sec:cobordism-tqft}
\subsection{Merge, split and point-shift cobordisms}\label{subsec:merge-split}
We can  compute the cobordism map associated with some simple decorated cobordisms. The computations are subsequently used in our proof of the invariance of the cobordism maps over weak Floer homology groups. Let $(Y,\Link)$ denote a pointed link, and $L$ denote the underlying link, which includes the alternating basepoints $\wpoint$ and $\zpoint$.  Assume that $(Y_2,\Link_2)$ is obtained by adding an unlink component to $(Y,\Link)$, while $(Y_1,\Link_1)$ is obtained from $(Y,\Link)$ by adding a pair of adjacent basepoints  on one of the link components (the distinguished component), near some $z\in\zpoint$. In particular, we are assuming $Y_1=Y_2=Y$. We may then consider the {\emph{merge}} cobordism from $(Y_2,\Link_2)$ to $(Y_1,\Link_1)$, in which the unlink component merges  with the distinguished component to create a component with an extra pair of basepoints. This cobordism is illustrated in Figure~\ref{fig:Simple-Cobordisms}(a). The reverse of a merge cobordism is a {\emph{split}} cobordism, which is illustrated in Figure~\ref{fig:Simple-Cobordisms}(b).\\
	\begin{figure}
	\def\svgwidth{0.85\textwidth}
	{\footnotesize{
			\begin{center}
				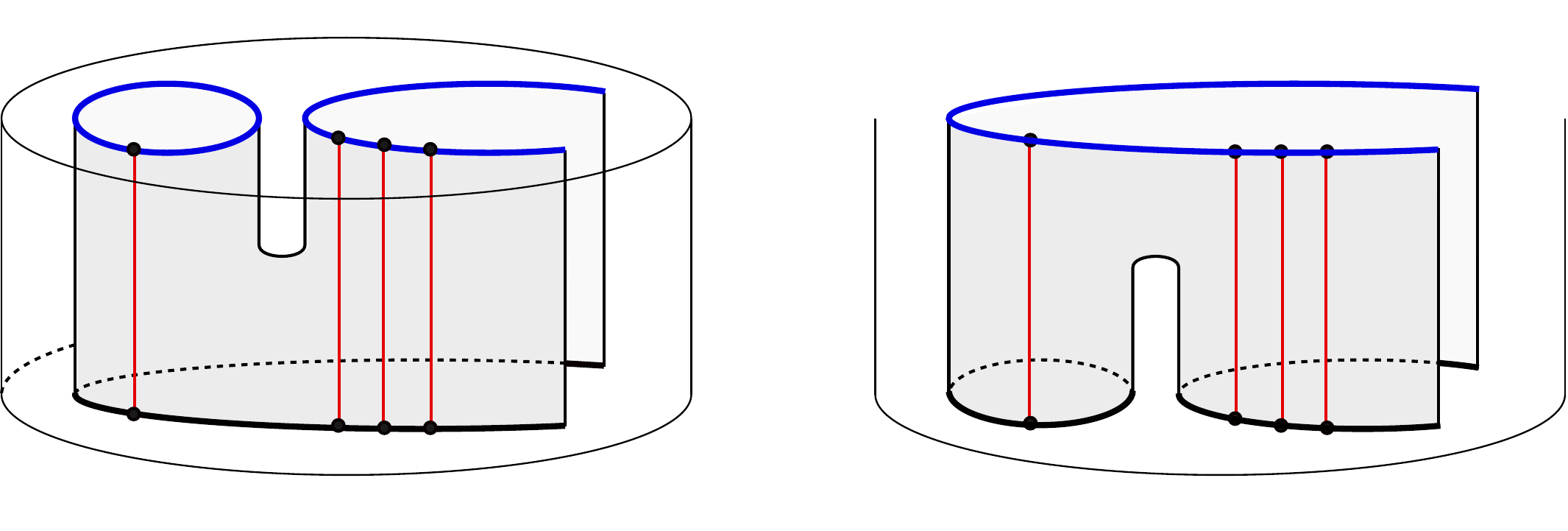
	\end{center}}}
	\caption{(a) A single-pointed unlink component of $\Link_2$ is merged into another link component to form the pointed link $\Link_1$. (b) The reverse cobordism is a split, which places one of the basepoints from a component of $\Link_2$ on a small unlink component in $\Link_1$.}
	\label{fig:Simple-Cobordisms}
\end{figure}  

Let us assume that $(X,\SCob_m):(Y_2,\Link_2)\ra (Y_1,\Link_1)$ is a merge cobordism, which is obtained from the pointed link $(Y,\Link)$ as above. In particular,   $X=Y\times I$. Let $H=(\HSurf,\alphas_0,\alphas_1,\wpoint,\zpoint)$ denote a pointed Heegaard diagram for $(Y,\Link)$, and $z\in\zpoint$ denotes a basepoint on the distinguished component of $\Link$ so that the merging happens near $z$. Using the notation set before Theorem~\ref{thm:perturbation-formula}, we may construct a trisection diagram for the merge cobordism as follows. First, we let $\alphas_2$ denote a collection of simple closed curves which is obtained from $\alphas_1$ by small Hamiltonian isotopies (away from $\wpoint$ and $\zpoint$).
Abusing the notation, we will use the same label $H$ to denote the trisection diagram 
$(\HSurf,\alphas_0,\alphas_1,\alphas_2,\wpoint,\zpoint)$, which corresponds to the trivial decorated cobordism $(Y,\Link)\times I$. Set
\begin{align*}
\oH_m=\Big(\oS=\HSurf\#_{z,w'}\, \oalphas_0,\oalphas_1,\oalphas_2,\oW=\wpoint\cup\{w\},\oZ=\zpoint\cup\{\zpt_2,\zpt'_2\}-\{z\}\Big),\quad\text{where}\ \ \oalphas_i=\alphas_i\cup\{\alpha_i\}.	
\end{align*}	
It is then straight-forward to check that $\oH_m$ is a trisection diagram for $(X,\SCob_m)$ and that if $H$ is $\spinc$-admissible for some $\spinc\in\SpinC(Y)=\SpinC(X)$, so is $\oH_m$. Similarly, the split cobordism $(X,\SCob_s)$ is described by the trisection diagram 
\begin{align*}
	\oH_s=\Big(\oS=\HSurf\#_{z,w'}\, \oalphas_0,\oalphas_2,\oalphas_1,\oW=\wpoint\cup\{w\},\oZ=\zpoint\cup\{\zpt_2,\zpt'_2\}-\{z\}\Big).	
\end{align*}	
 All holomorphic triangles which contribute to  $F_{\oH_m,\spinct}$ (or $F_{\oH_s,\spinct}$) may be described in terms of the holomorphic triangles contributing to the map $F_{H,\spinct}$ associated with the product cobordism $(Y,\Link)\times I$ using \cite[Proposition 5.2]{MO-integer-surgery}. With respect to the bi-grading $(\grw,\grz)$ 
 \begin{equation}\label{eq:merge-split-identification}
\begin{split}
 &\HFLs(Y_1,\Link_1,\spinc)=\HFLs(Y,\Link,\spinc)\otimes_{\Abb^\star}
 \big(\Abb^\star\oplus\Abb^\star\llb1,-1\rrb\big)
 \quad\text{and}\\ 
  &\HFLs(Y_2,\Link_2,\spinc)=\HFLs(Y,\Link,\spinc)\otimes_{\Abb^\star}\big(\Abb^\star\oplus\Abb^\star\llb1,1\rrb\big)
 \end{split} 
 \end{equation}	
for every $\spinc\in\SpinC(Y)$.   Note that if $L_j$ is the underlying link of $\Link_j$, there are natural identifications
\begin{align}\label{eq:merge-split-identification-2}
	\HFKs(Y_1,L_1,\spinc)=\HFKs(Y_2,L_2,\spinc)=\HFKs(Y,L,\spinc).
\end{align}	
Since the map associated with the product decorated cobordism is the identity, the following theorem is implied by \cite[Proposition 5.2]{MO-integer-surgery}.
\begin{thm}\label{thm:merge-split}
Assume that $(Y,\Link)$ is a pointed link and let the merge and split cobordisms 
\begin{align*}
(X,\SCob_m):(Y_2,\Link_2)\ra (Y_1,\Link_1)\quad\text{and}\quad 	
(X,\SCob_s):(Y_1,\Link_1)\ra (Y_2,\Link_2)
\end{align*}
be constructed from $(Y,\Link)$ as above. Then, for every $\SpinC$ structure $\spinc\in\SpinC(X)=\SpinC(Y)$ and $\star\in\{-,\circ,\wedge\}$, under the identifications of (\ref{eq:merge-split-identification}) we have
\begin{align*}
&\fmap_{X,\SCob_m,\spinc}^\star=Id\oplus \zvar\cdot Id:
\HFLs(Y,\Link,\spinc)\oplus 	\HFLs(Y,\Link,\spinc) \llb1,1\rrb\ra 
\HFLs(Y,\Link,\spinc)\oplus 	\HFLs(Y,\Link,\spinc) \llb1,-1\rrb,\\
&\fmap_{X,\SCob_s,\spinc}^\star=\zvar\cdot Id\oplus  Id:
\HFLs(Y,\Link,\spinc)\oplus 	\HFLs(Y,\Link,\spinc) \llb1,-1\rrb\ra 
\HFLs(Y,\Link,\spinc)\oplus 	\HFLs(Y,\Link,\spinc) \llb1,1\rrb.
\end{align*}	
Moreover, under the identification of (\ref{eq:merge-split-identification-2}) we have
\begin{align*}
&\gmap_{X,\SCob_m,\spinc}^\star=Id:\HFKs(Y,L,\spinc)\ra \HFKs(Y,L,\spinc)
\quad\text{and}\quad
\gmap_{X,\SCob_s,\spinc}^\star=\zvar\cdot Id: \HFKs(Y,L,\spinc)\ra \HFKs(Y,L,\spinc).	
\end{align*}	
In particular, $\widehat{\gmap}_{X,\SCob_s,\spinc}:\HFKhh(Y,L,\spinc)\ra \HFKhh(Y,L,\spinc)$ 
is trivial (i.e. zero).
\end{thm}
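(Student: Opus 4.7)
The plan is to apply the connected-sum formula for holomorphic triangles of \cite[Proposition 5.2]{MO-integer-surgery} to the trisection diagrams $\oH_m$ and $\oH_s$, both of which are constructed as connected sums $H\#_{z,w'}H'$ where $H=(\HSurf,\alphas_0,\alphas_1,\alphas_2,\wpoint,\zpoint)$ is the product trisection diagram for $(Y,\Link)\times I$ (with $\alphas_2$ a small Hamiltonian translate of $\alphas_1$) and $H'$ is the genus-zero diagram on the sphere. The connected-sum formula, applied with sufficiently large tube length (as in Theorem~\ref{thm:perturbation-formula}), identifies each holomorphic triangle in $\oH_m$ or $\oH_s$ contributing to the chain map with a pair $(\Delta,\Delta')$ consisting of a triangle $\Delta$ on $H$ and a small triangle $\Delta'$ on $H'$ that meet along the neck.

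First I would enumerate the contributing triangles on $H'$. With the labeling of intersection points as in Figure~\ref{fig:Stabilization-HD}, the two small triangles relevant to the sphere part of $\oH_m$ (resp.\ $\oH_s$) are the unique positively-oriented triangles with vertices $\{x_0^0,x_1^0,x_2^\epsilon\}$ (resp.\ the mirrored configuration for $\oH_s$) for $\epsilon\in\{0,1\}$. Reading off the positions of the basepoints $\wpt,\wpt',\zpt_i,\zpt'_i$ on the sphere, one of these triangles has trivial multiplicity at all basepoints of $\oW\cup\oZ$, while the other picks up a single factor of $\zvar$ (contributed by $\zpt'_2$ lying inside the triangle), and neither picks up any power of $\wvar$. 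Simultaneously, the triangle on the $H$-side contributes according to the map $f_{H,\spinc}$, which induces the identity on $\HFLs(Y,\Link,\spinc)$ since $H$ corresponds to the trivial decorated cobordism $(Y,\Link)\times I$.

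Second I would assemble the pieces. Under the identifications (\ref{eq:merge-split-identification}) the generators of $\HFLs(Y_2,\Link_2,\spinc)$ (respectively of $\HFLs(Y_1,\Link_1,\spinc)$) split, by (the proof of) Theorem~\ref{thm:perturbation-formula} combined with Zemke's quasi-stabilization isomorphisms \cite[Proposition 4.18]{Zemke-1}, as $\x\otimes 1\oplus \x\otimes \theta$ with the shift $\llb 1,1\rrb$ or $\llb 1,-1\rrb$ dictated by the bigradings of the added generators. Pairing with the two sphere triangles, the contribution $\Delta'$ with trivial multiplicity yields the summand $Id$ and the contribution with multiplicity one at a $\zpoint$-basepoint yields the summand $\zvar\cdot Id$, giving the stated formulas for $\fmap^\star_{X,\SCob_m,\spinc}$ and $\fmap^\star_{X,\SCob_s,\spinc}$; the asymmetry between the two cobordisms comes precisely from swapping the roles of $\oalphas_1$ and $\oalphas_2$ in the trisection.

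Third, for $\gmap^\star$ I would restrict to the kernel of the $\wedgews$-action. Under the identification (\ref{eq:merge-split-identification-2}), $\HFKs(Y,L,\spinc)$ sits inside $\HFLs(Y_j,\Link_j,\spinc)$ as the first summand $\HFLs(Y,\Link,\spinc)\otimes 1\cap \ker\wedgews$, so the matrix formulas for $\fmap^\star$ restrict to $Id$ on the merge side and to $\zvar\cdot Id$ on the split side; taking $\star=\wedge$ (where $\zvar=0$) immediately gives the triviality of $\widehat{\gmap}_{X,\SCob_s,\spinc}$. The principal obstacle is the bookkeeping: verifying which of the two small sphere triangles carries the $\zvar$ versus which carries no variable, and confirming that the induced decompositions of $\HFLs(Y_j,\Link_j,\spinc)$ match (\ref{eq:merge-split-identification}) with the correct bigrading shifts $\llb1,\pm1\rrb$. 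Once that combinatorial check is in place, the theorem is a direct consequence of \cite[Proposition 5.2]{MO-integer-surgery}.
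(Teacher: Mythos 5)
Your proposal is correct and takes essentially the same route as the paper: build the trisection diagrams $\oH_m$ and $\oH_s$ as connected sums with the genus-zero sphere diagram, invoke \cite[Proposition 5.2]{MO-integer-surgery} to identify the holomorphic triangles, note that the $H$-side map is the identity (product cobordism), and read off the $\zvar$-multiplicity from the two small sphere triangles. The paper is terse and leaves precisely the combinatorial check you describe — which of the two small triangles hits $\zpt_2$ or $\zpt'_2$ — as implicit; your proposal fills in that bookkeeping explicitly but does not deviate from the paper's argument.
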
	

	\begin{figure}
	\def\svgwidth{0.95\textwidth}
	{\footnotesize{
			\begin{center}
\begingroup%
  \makeatletter%
  \providecommand\color[2][]{%
    \errmessage{(Inkscape) Color is used for the text in Inkscape, but the package 'color.sty' is not loaded}%
    \renewcommand\color[2][]{}%
  }%
  \providecommand\transparent[1]{%
    \errmessage{(Inkscape) Transparency is used (non-zero) for the text in Inkscape, but the package 'transparent.sty' is not loaded}%
    \renewcommand\transparent[1]{}%
  }%
  \providecommand\rotatebox[2]{#2}%
  \newcommand*\fsize{\dimexpr\f@size pt\relax}%
  \newcommand*\lineheight[1]{\fontsize{\fsize}{#1\fsize}\selectfont}%
  \ifx\svgwidth\undefined%
    \setlength{\unitlength}{1438.15552173bp}%
    \ifx\svgscale\undefined%
      \relax%
    \else%
      \setlength{\unitlength}{\unitlength * \real{\svgscale}}%
    \fi%
  \else%
    \setlength{\unitlength}{\svgwidth}%
  \fi%
  \global\let\svgwidth\undefined%
  \global\let\svgscale\undefined%
  \makeatother%
  \begin{picture}(1,0.32822)%
    \lineheight{1}%
    \setlength\tabcolsep{0pt}%
    \put(0,0){\includegraphics[width=\unitlength,page=1]{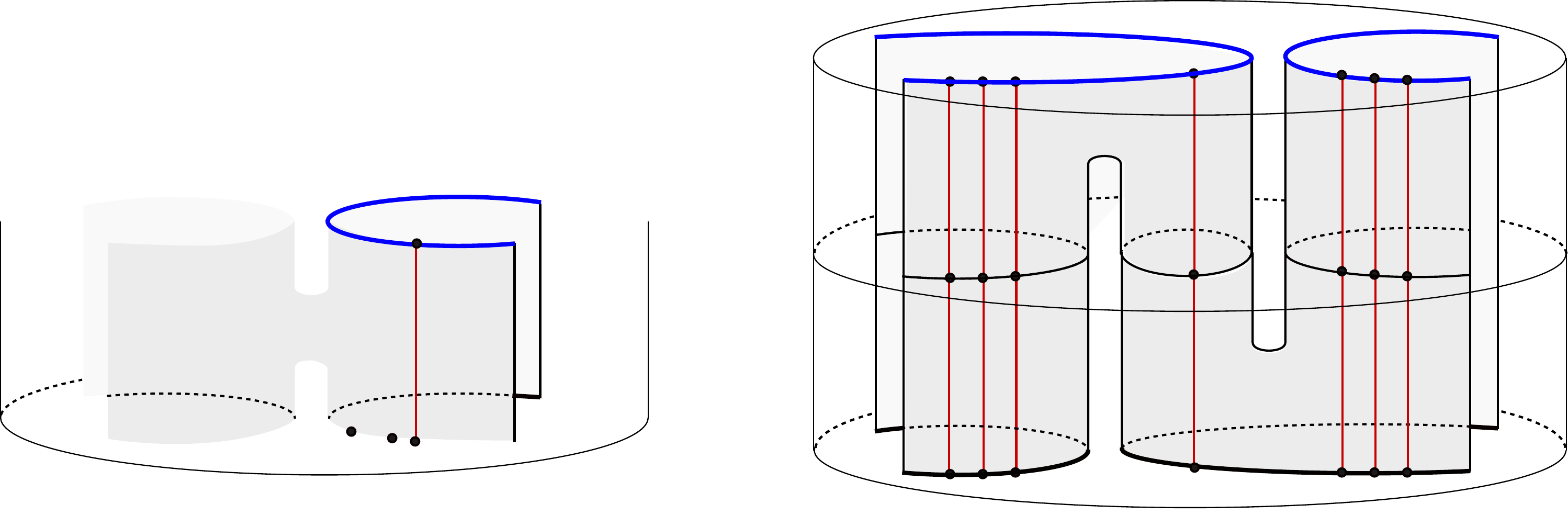}}%
    \put(0.0035414,0.00762704){\color[rgb]{0.10196078,0.10196078,0.10196078}\makebox(0,0)[lt]{\lineheight{1.25}\smash{\begin{tabular}[t]{l}\textbf{(a)}\end{tabular}}}}%
    \put(0,0){\includegraphics[width=\unitlength,page=2]{MS-Cobordism.pdf}}%
    \put(0.51148333,0.00241194){\color[rgb]{0.10196078,0.10196078,0.10196078}\makebox(0,0)[lt]{\lineheight{1.25}\smash{\begin{tabular}[t]{l}\textbf{(b)}\end{tabular}}}}%
    \put(0,0){\includegraphics[width=\unitlength,page=3]{MS-Cobordism.pdf}}%
  \end{picture}%
\endgroup%

	\end{center}}}
	\caption{(a) A point-shift cobordism is obtained from the product cobordism by shifting one basepoint from a link component to another link component. (b) a point-shift cobordism may be decomposed as the composition of a split cobordism and a merge cobordism.}
	\label{fig:MS-Cobordism}
\end{figure}  

\begin{rmk}\label{rmk:merge-split}
If the unlink component which merges to a link component of $\Link\subset Y$ includes more than one pair of basepoints, we obtain a decorated cobordism, which may be described as first modifying a product decorated cobordism to a merge, and then applying several simple perturbations. Since simple perturbations do not change the $\Abb^\star$-homomorphism $\gmap_{X,\SCob,\spinct}^\star$, the last two statements in Theorem~\ref{thm:merge-split} remain valid for these more general types of merge and split cobordisms.
\end{rmk}

Theorem~\ref{thm:merge-split} has an interesting consequence about a decorated cobordism, which we call a {\emph{point-shift}} cobordism. As before, let $(Y_2,\Link_2)$ be a pointed link, so that the underlying link $L$ has at least two components. Let $L^1,L^2$ denote two distinguished components of $L$ so that $L^1$ includes at least $r+s$ basepoints from either of $\wpoint$ and $\zpoint$ (with $r,s>0$). Adding a $1$-handle to $L\times I\subset Y\times I$ which connects the cylinders $L^1\times I$ and $L^2\times I$, we obtain a surface $\Surface$, together with a decoration shifting $r$ adjacent and alternating pairs of basepoints from $L^1\times\{0\}$ to $L^2\times\{1\}$, as illustrated in Figure~\ref{fig:MS-Cobordism}(a). This gives a decorated cobordism 
\begin{align*}
(X,\SCob_{ps}):(Y_2,\Link_2)\ra (Y_1,\Link_1),	
\end{align*}	   
where $Y_1=Y$, the underlying link in $\Link_1$ is $L$, and $r$ pairs of basepoints are moved from $L^1$ to $L^2$. The decorated cobordism $\SCob_{ps}$ is then called a point-shift cobordism. If we remove the $r$ pairs of basepoints which are shifted from one component to the other components from $\Link_1$ or $\Link_2$, we obtain a pointed link $\Link$ in $Y$. It follows that 
\begin{align}\label{eq:HF-decomposition}
\HFLs(Y_i,\Link_i,\spinc)=\HFLs(Y,\Link,\spinc)\otimes_{\Abb^*}\big(\Abb^*\oplus \Abb^*\llb1,-1\rrb\big)^{r},\quad\text{for}\ \ i=1,2,\ \star\in\{-,\circ,\wedge\}.
\end{align}	
 Let us  denote the subset $\HFLs(Y,\Link,\spinc)\otimes_{\Abb^\star}(\Abb^\star)^{r}$ of $\HFLs(Y_i,\Link_i,\spinc)$ by $\HFLs_i(Y,\Link,\spinc)$. More precisely,  we choose $\HFLs_i(Y,\Link,\spinc)$ equal to  the common kernel of the action of all basepoint maps $\Phi^\star_w$ such that $w$ belongs to the set of $r$ basepoints which are removed. The subset $\HFLs_i(Y,\Link,\spinc)$ of $\HFLs(Y_i,\Link_i,\spinc)$ is then canonically determined and identified with $\HFLs(Y,\Link,\spinc)$. It also follows that $\HFKs(Y,L,\spinc)$ is a subset of  $ \HFLs_i(Y,\Link,\spinc)$.  \\

The point-shift cobordism $(X,\SCob_{ps})$ is the composition of a generalized split cobordism and a generalized merge cobordism (defined in Remark~\ref{rmk:merge-split}), in the form
\begin{align*}
	(X,\SCob_{ps})=(X_m,\SCob_{m})\circ(X_s,\SCob_{s}):(Y_2,\Link_2)\ra (Y_1,\Link_1),	\quad
	\text{where}\ \ X=X_m=X_s=Y\times I,
\end{align*}	
as illustrated in Figure~\ref{fig:MS-Cobordism}(b). Theorem~\ref{thm:merge-split} implies the following proposition.
\begin{prop}\label{prop:point-shift}
Let $(X,\SCob_{ps}):(Y_2,\Link_2)\ra (Y_1,\Link_1)$ denote a point-shift cobordism as  above, with $Y_1=Y_2=Y$ and $X=Y\times I$, where $\Link_1$ and $\Link_2$ are decorations of the same link $L$. Let $\Link$ be a pointed link which is obtained from either of $\Link_1$ or $\Link_2$ by removing the shifted basepoints, and for $\spinc\in\SpinC(Y)$ and $\star\in\{-,\circ,\wedge\}$ construct  $\HFLs_i(Y,\Link,\spinc)\subset \HFLs(Y_i,\Link_i,\spinc)$ as above. Then $\fmap_{X,\SCob_{ps},\spinc}^\star$ maps $\HFLs_2(Y,\Link,\spinc)$ to $\HFLs_1(Y,\Link,\spinc)$. Moreover,
\begin{align*}
	&\fmap_{X,\SCob_{ps},\spinc}^\star=\zvar\cdot Id:\HFLs(Y,\Link,\spinc)=\HFLs_2(Y,\Link,\spinc)\ra \HFLs_1(Y,\Link,\spinc)=\HFLs(Y,\Link,\spinc)\quad\text{and}\\
	&\gmap_{X,\SCob_{ps},\spinc}^\star=\zvar\cdot Id:\HFKs(Y,L,\spinc)\ra \HFKs(Y,L,\spinc),\quad\quad\forall\ \ \spinc\in\SpinC(Y),\ \star\in\{-,\circ,\wedge\}.
\end{align*}	
In particular, $\widehat{\fmap}_{X,\SCob_{ps},\spinc}$ and $\widehat{\gmap}_{X,\SCob_{ps},\spinc}$ are both trivial (i.e. zero).
\end{prop}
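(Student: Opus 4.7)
The plan is to apply Theorem~\ref{thm:merge-split} together with Remark~\ref{rmk:merge-split} to each factor in the decomposition $(X,\SCob_{ps})=(X_m,\SCob_m)\circ(X_s,\SCob_s)$ recorded in the paragraph preceding the proposition, and combine the two factors via functoriality of the decorated cobordism TQFT.

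Functoriality of the decorated TQFT (Theorem~\ref{thm:invariance-decorated-case} together with the results of \cite{AE-2,Zemke-1}) yields
\[\gmap^\star_{X,\SCob_{ps},\spinc}=\gmap^\star_{X_m,\SCob_m,\spinc}\circ\gmap^\star_{X_s,\SCob_s,\spinc}\quad\text{and}\quad \fmap^\star_{X,\SCob_{ps},\spinc}=\fmap^\star_{X_m,\SCob_m,\spinc}\circ \fmap^\star_{X_s,\SCob_s,\spinc}.\]
The generalized split and generalized merge formulas of Theorem~\ref{thm:merge-split} and Remark~\ref{rmk:merge-split} give $\gmap^\star_{X_s,\SCob_s,\spinc}=\zvar\cdot Id$ and $\gmap^\star_{X_m,\SCob_m,\spinc}=Id$ on the relevant kernel Floer homology groups. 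Their composition is $\zvar\cdot Id$, which establishes the formula for $\gmap^\star_{X,\SCob_{ps},\spinc}$.

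For the $\fmap$ statement on $\HFLs_i(Y,\Link,\spinc)$, I would trace through the decompositions (\ref{eq:merge-split-identification}) and (\ref{eq:HF-decomposition}) using Zemke's quasi-stabilization description invoked in the proof of Proposition~\ref{prop:weak-HF}. The subspace $\HFLs_2(Y,\Link,\spinc)\subset \HFLs(Y_2,\Link_2,\spinc)$ is the common kernel of the $r$ shifted basepoint action maps $\Phi^\star_w$, and in the quasi-stabilization splitting it corresponds to the image of $(S^+_{w,z})^{\otimes r}$ applied to $\HFLs(Y,\Link,\spinc)$. Applying the generalized split formula from Theorem~\ref{thm:merge-split}, this subspace maps identically onto the analogous summand of $\HFLs(Y_{int},\Link_{int},\spinc)$ associated with the shifted pairs now sitting on the auxiliary unknot; the generalized merge formula then multiplies the result by $\zvar$, producing an element in the summand which is identified with $\HFLs_1(Y,\Link,\spinc)$. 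Under the canonical identifications both sides are $\HFLs(Y,\Link,\spinc)$, so the composite is $\zvar\cdot Id$. The hat triviality is immediate: in the hat theory ($\star=\wedge$) the variable $\zvar$ acts as zero, so $\widehat{\fmap}_{X,\SCob_{ps},\spinc}$ and $\widehat{\gmap}_{X,\SCob_{ps},\spinc}$ vanish.

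The main obstacle is the $\HFLs$-level bookkeeping when $r>1$. By Remark~\ref{rmk:merge-split}, each generalized split/merge is a simple split/merge followed by $r-1$ simple perturbations, and Theorem~\ref{thm:perturbation-formula} says that such perturbations act as identity on fresh $\Abb^\star\oplus\Abb^\star\llb 1,-1\rrb$ factors. One must therefore verify that the sole factor of $\zvar$ (coming from the simple merge applied to the simple split's output) indeed survives after restriction to $\HFLs_2(Y,\Link,\spinc)$, and that no additional powers of $\zvar$ are introduced by the perturbation bookkeeping; this is a careful but routine check using the commutation of the basepoint maps with quasi-stabilization.
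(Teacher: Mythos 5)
Your proposal matches the paper's intended argument exactly: the paper asserts that Proposition~\ref{prop:point-shift} follows from Theorem~\ref{thm:merge-split} via the decomposition $(X,\SCob_{ps})=(X_m,\SCob_m)\circ(X_s,\SCob_s)$ given immediately before the proposition, together with the extension to $r>1$ via Remark~\ref{rmk:merge-split} and the perturbation formula (Theorem~\ref{thm:perturbation-formula}), and you have supplied precisely those ingredients plus the quasi-stabilization bookkeeping needed to track the $\HFLs_i$ subspace. Your observation about the hat case ($\zvar$ acting as zero) is also the paper's reasoning.
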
	

\subsection{Invariance under perturbations; the general case}\label{subse:perturbation-general-case}
In Section~\ref{subsec:perturbation-invariance} we studied the invariance of the the link cobordism map under simple perturbations of the decoration on the cobordism. In this section, we would like to consider the more general perturbations (the ones that are not necessarily simple).
\begin{thm}\label{thm:perturbation-invariance-general}
Suppose that  the decorated link cobordism $(X,\SCob):(Y_2,\Link_2)\ra (Y_1,\Link_1)$ 
is obtained from the decorated link cobordism $(X,\SCob')$ by a perturbation and let $L_j$ denote the underlying link in $\Link_j$ for $j=1,2$. Then the induced maps
\begin{align*}
\gmap_{X,\SCob,\spinct}^\star,\gmap_{X,\SCob',\spinct}^\star:\HFKsw(Y_2,L_2,\spinct|_{Y_2})\ra \HFKsw(Y_1,L_1,\spinct|_{Y_1})
\end{align*}
are identical for every $\spinct\in\SpinC(X)$ and $\star\in\{-,\circ\}$.
\end{thm}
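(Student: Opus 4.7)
The plan is to reduce the theorem to the case of simple perturbations, which is already covered by Theorem~\ref{thm:perturbation-formula}. Recall that a perturbation decomposes a $2m$-gon of the cell decomposition into a $2k$-gon and a $2(m-k+1)$-gon; the simple case is $k \in \{1, m\}$, and Theorem~\ref{thm:perturbation-formula} shows $\gmap^\star_{X,\SCob,\spinct} = \gmap^\star_{X,\SCob',\spinct}$ on all of $\HFKs$ in this case. My task is therefore to handle the non-simple case $2 \leq k \leq m-1$, where invariance may fail on $\HFKs$ but must be recovered on the subspace $\HFKsw$.

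The first step is to express a non-simple perturbation as a composition of elementary moves whose Floer-theoretic effect is already understood. I will perform an auxiliary simple perturbation near the new edge of the decoration, creating an extra bigon in the cell decomposition, and then apply a sequence of edge switches in the spirit of Lemma~\ref{lem:rectangulations} and the proof of Theorem~\ref{thm:moving-between-decorations} to migrate the bigon's edge into the desired position. A final simple deperturbation removes any residual bigon. Since Theorem~\ref{thm:perturbation-formula} ensures that both the auxiliary simple perturbation and the concluding simple deperturbation preserve $\gmap^\star$ on all of $\HFKs$, the invariance problem on $\HFKsw$ is reduced to the corresponding question for a single edge switch.

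The heart of the argument is the analysis of an edge switch. By definition, an edge switch is a deperturbation followed by a perturbation that preserves the total number of edges, and geometrically it corresponds to a controlled isotopy of the dividing set on $\Surface$. My plan is to realize this move, up to additional simple perturbations and twists, as a composition that includes a point-shift (or generalized merge-then-split) cobordism of the type analyzed in Section~\ref{subsec:merge-split}. By Proposition~\ref{prop:point-shift} the point-shift contributes a factor of $\zvar\cdot Id$ to $\gmap^\star$ on $\HFKs$, while by Theorem~\ref{thm:twist-invariance} the twist components act as the identity. Thus the difference of the two cobordism maps on $\HFKs$ factors through multiplication by~$\zvar$.

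The final step is to show this difference vanishes once restricted to $\HFKsw = \HFKs \cap (\zvar\cdot \HFLs)$. Using the commutation relations of Proposition~\ref{prop:point-action-independence} between $\Phi^\star_w$, $\Psi^\star_z$, and multiplication by $\zvar$, together with the fact that every element of $\HFKsw$ lifts to a $\zvar$-divisible element of $\HFLs$, the residual $\zvar$-factor from the point-shift becomes consistent between the two decompositions, yielding $\gmap^\star_{X,\SCob,\spinct} = \gmap^\star_{X,\SCob',\spinct}$ on $\HFKsw$. The main obstacle, and where most of the work lies, will be giving a canonical and sufficiently explicit geometric decomposition of an arbitrary edge switch into the elementary cobordisms (simple perturbations, twists, and point-shifts), so that Proposition~\ref{prop:point-shift} can be applied unambiguously; this also explains why the theorem excludes $\star = \wedge$, where the $\zvar$-factor from the point-shift would not survive the passage to $\HFKhh$ and the $\zvar$-divisibility condition becomes vacuous.
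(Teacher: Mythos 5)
Your high-level intuition is right — compare the general perturbation with an auxiliary simple perturbation (i.e., one edge-switch), use point-shift maps to produce a $\zvar$ factor, and close the argument on the $\zvar$-divisible subspace — and you correctly identify the relevant ingredients (Theorem~\ref{thm:perturbation-formula}, Proposition~\ref{prop:point-shift}, Theorem~\ref{thm:twist-invariance}). However, there are two genuine problems.

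First, you do not actually close the key step but defer it as ``the main obstacle.'' The paper does not abstractly decompose the edge-switch as a composition of elementary cobordisms; instead it constructs an explicit pointed Heegaard $4$-tuple $H = (\HSurf,\alphas_0,\alphas_1,\alphas_2,\alphas_3,\wpoint,\zpoint)$ in which $H_3$ and $H_2$ represent the two decorations while $H_0$ and $H_1$ are point-shift cobordisms, and then invokes the quadrilateral associativity relation
$\fmap^\star_2\bigl(\x\otimes\fmap^\star_0(\theta^\star_3\otimes\theta^\star_1)\bigr) = \fmap^\star_1\bigl(\fmap^\star_3(\x)\bigr)$.
Combining this with Proposition~\ref{prop:point-shift} (so that $\fmap^\star_0(\theta^\star_3\otimes\theta^\star_1)=\zvar\theta^\star_2$ and $\fmap^\star_1=\zvar\cdot Id$ on the relevant subgroup) yields the precise identity $\zvar\fmap^\star_2 = \zvar\fmap^\star_3$. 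Getting this associativity relation with the correct top-generator inputs is where the content of the proof lies, and it does not follow from the decomposition-of-cobordisms picture alone; the point-shift appears naturally as $H_0$ and $H_1$ of a single $4$-tuple, not as a separate cobordism factor you compose with. Your plan of running a \emph{sequence} of edge switches and an additional simple perturbation/deperturbation pair is also unnecessary: one perturbation versus one auxiliary simple perturbation is a single edge switch, and that is all the paper needs.

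Second, your algebra is imprecise in a way that matters. You say ``the difference of the two cobordism maps on $\HFKs$ factors through multiplication by $\zvar$'' and then conclude it ``vanishes once restricted to $\HFKsw$.'' But if $D = \gmap_{\SCob} - \gmap_{\SCob''}$ were of the form $\zvar\cdot h$, restricting to $\HFKsw$ would not kill it. The correct algebraic conclusion of the $4$-tuple argument is $\zvar\cdot D = 0$, i.e.\ $\zvar\gmap_{\SCob} = \zvar\gmap_{\SCob''}$; then for $\x = \zvar\y \in \HFKsw$, $\Abb^\star$-linearity gives $\gmap_{\SCob}(\x) = \zvar\gmap_{\SCob}(\y) = \zvar\gmap_{\SCob''}(\y) = \gmap_{\SCob''}(\x)$. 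Without nailing down which side of the equation the $\zvar$ lands on, the deduction that $D$ vanishes on the $\zvar$-divisible subspace does not go through. Your reading of why $\star=\wedge$ is excluded is essentially right, but it reflects that $\HFKsw$ is simply not defined in the hat theory (Definition~\ref{defn:weak-HF}) rather than a failure of a $\zvar$-factor to ``survive.''
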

\begin{proof}
Fix the $\SpinC$ structure $\spinct\in\SpinC(X)$ and drop it from notation for convenience. 	
Further assume that the cell decomposition associated with $\SCob$ is obtained from the cell decomposition associated with $\SCob'$ by adding an edge (connecting a blue vertex $v$ to a black vertex $v'$) in a $2m$-gon cell $P$. The dashed red curve  inside the red $8$-gon in Figure~\ref{fig:General-Perturbation}(a) illustrates one such perturbation. Correspondingly, we may consider a second edge  of $P$ (connecting $v$ to a neighboring black vertex $v''$), which determines a simple perturbation $(X,\SCob'')$ of $(X,\SCob)$,  as illustrated in Figure~\ref{fig:General-Perturbation}(b).\\

\begin{figure}
	\def\svgwidth{0.6\textwidth}
	{\footnotesize{
			\begin{center}
				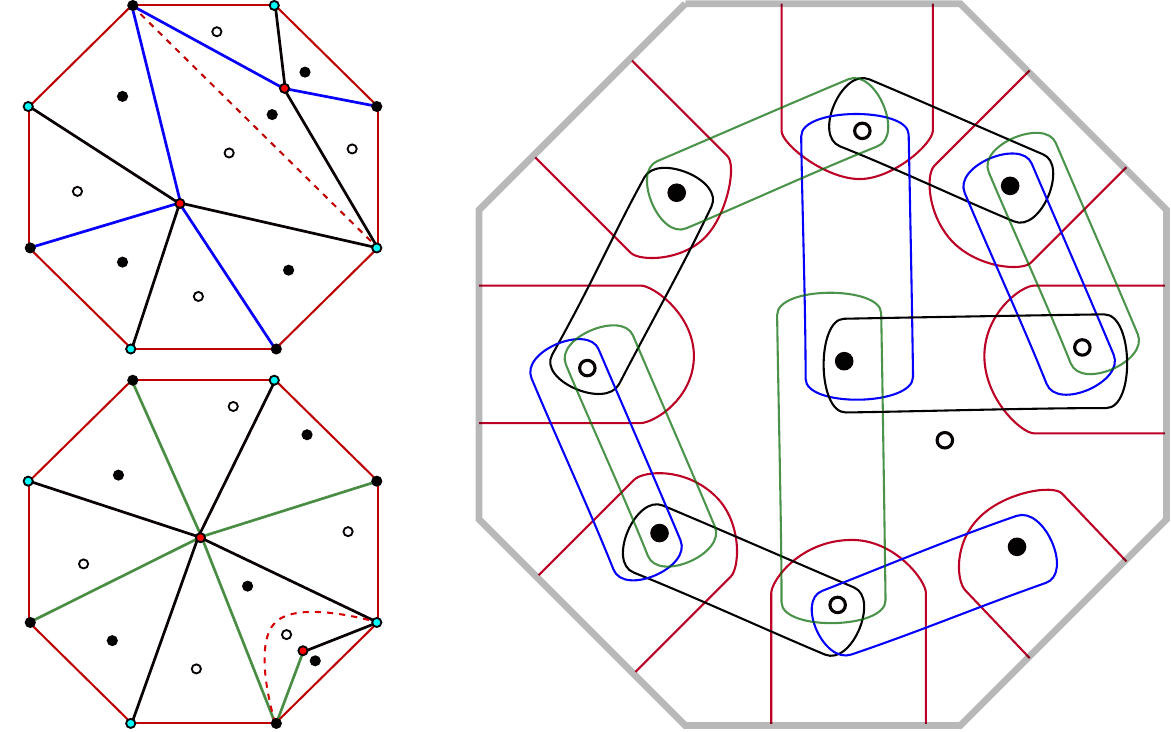
	\end{center}}}
	\caption{(a,b) Associated with a perturbation,  a  simple perturbation is constructed by changing one (red) edge in the cell-decomposition. (c) Corresponding to these two perturbations, we obtain a Heegaard quadruple 
		$H=(\HSurf,\alphas_0,\alphas_1,\alphas_2,\alphas_3,\wpoint,\zpoint)$ so that $H_2$ and $H_3$ determine the simple perturbation and the initial perturbation, respectively.}
	\label{fig:General-Perturbation}
\end{figure}  

 There is a pointed Heegaard $4$-tuple  
$H=(\HSurf,\alphas_0,\alphas_1,\alphas_2,\alphas_3,\wpoint,\zpoint)$
which may be used to describe both perturbations at the same time, as described below and illustrated in Figure~\ref{fig:General-Perturbation}(c) (the curves in $\alphas_0,\alphas_1,\alphas_2$ and $\alphas_3$ are colored red, black, blue and green, respectively). The two perturbations determine two different triangulations of $P$, while they are identical away from $P$, as illustrated in Figure~\ref{fig:General-Perturbation}(a,b). The edges of the first triangulation of $P$ are colored black and blue, while the edges of the second triangulation are colored black and green.
An open subset of the Heegaard surface $\HSurf$ is identified with $P$. 
 We may  choose $\alphas_0$ so that the only intersections of $\alphas_0$ with $P$ are $2m$ half circles which slightly enter $P$ from each one of its $2m$ red edges. Each one of the corresponding $2m$ curves in $\alphas_0$ grabs one basepoint, while $m$ of these basepoints are from $\wpoint$ and $m$ are from $\zpoint$. Moreover, a pair of extra basepoints $w\in\wpoint$ and $z\in \zpoint$ are placed closer to the center of $P$ in $\HSurf$.
The curves in $\alphas_i$ for $i=1,2,3$ are either outside $P$, or completely included inside $P$. In fact, there are precisely $m$ curves from either of these three collections which are included in $P$, and are denoted by $\alpha_i^1,\ldots,\alpha_i^m$, for $i=1,2,3$. Each  curve $\alpha_1^j$ is disjoint from the blue and green edges, cuts precisely one black edge from each triangle twice and separates a pair of basepoints (one from $\wpoint$ and the other from $\zpoint$) from the rest of basepoints. Similarly, the curve $\alpha_2^j$ is disjoint from the black and green edges, cuts a particular blue edge in a pair of points and separates a basepoint from $\wpoint$ and a basepoint from $\zpoint$ from the rest of basepoints, while the curve $\alpha_3^j$ is disjoint from the black and blue edges, cuts a particular blue edge in a pair of points and separates a basepoint from $\wpoint$ and a basepoint from $\zpoint$ from the rest of basepoints. We further assume that $z\in\zpoint$ is not grabbed by any of $\alpha_i^j$ for $i=1,2,3$ and $j=1,\ldots,m$. We may then complete this data (on $P$) to the Heegaard quadruple $H$, so that for $j\in\Z/4$, the trisection diagram $H_j$, obtained by removing $\alphas_j$ from $H$, is compatibles with the decorated cobordisms $(X,\SCob')$ and $(X,\SCob'')$ for $j=3,2$, respectively. Note that we may choose the labels so that each curve in $\alphas_3-\{\alpha_3^1,\alpha_3^2\}$ is a Hamiltonian isotope of a corresponding curve in  $\alphas_2-\{\alpha_2^1,\alpha_2^2\}$. The construction of this quadruple is illustrated in Figure~\ref{fig:General-Perturbation}(c).\\

For  every $i,j\in\Z/4$, the diagram $H_{i,j}=(\HSurf,\alphas_i,\alphas_j,\wpoint,\zpoint)$ determines the chain complex $(C_{i,j},d_{i,j})$ which gives the homology groups  $\HFLs_{i,j}$ for $\star\in\{-,\circ,\wedge\}$. The latter homology group is a  bi-graded $\Abb^\star$-module. 
Note that $\HFLs_{1,2},\HFLs_{1,3}$ and $\HFLs_{2,3}$ are Floer homology groups associated with unlinks inside connected sums of $S^1\times S^2$s, and are therefore generated over $\Abb^\star$ by a distinguished  top generators (with respect to the homological grading $\grw$) under the basepoint action of $\wedgews$. Denote the corresponding generators by 
\begin{align*}
\theta^\star_{1}\in\HFLs_{2,3},\quad \theta^\star_{2}\in\HFLs_{1,3}
\quad\text{and}\quad \theta^\star_{3}\in\HFLs_{1,2}.
\end{align*}
The trisection diagrams  $H_0,H_1,H_2$ and $H_3$ (respectively) determine the ${\Abb^\star}$-homomorphisms	   
\begin{align*}
&\fmap^\star_0:\langle\theta^\star_{3}\otimes\theta^\star_{1}\rangle_{\wedgews}
=\HFLs_{1,2}\otimes_{\Abb^\star}\HFLs_{2,3}\ra \HFLs_{1,3}=\langle\theta^\star_{2}\rangle_{\wedgews},\\
&\fmap^\star_1:\HFLs_{0,2}=\HFLs_{0,2}\otimes_{\Abb^\star}\langle\theta^\star_{1}\rangle_{\Abb^\star}
\subset\HFLs_{0,2}\otimes_{\Abb^\star}\HFLs_{2,3}\ra\HFLs_{0,3},\\
&\fmap^\star_2:\HFLs_{0,1}=\HFLs_{0,1}\otimes_{\Abb^\star}\langle \theta^\star_2\rangle_{\Abb^\star} \subset \HFLs_{0,1}\otimes_{\Abb^\star}\HFLs_{1,3}\ra\HFLs_{0,3}
\quad\quad\quad\text{and} \\
&\fmap^\star_3:\HFLs_{0,1}=\HFLs_{0,1}\otimes_{\Abb^\star}\langle\theta^\star_3\rangle_{\Abb^\star}
\subset \HFLs_{0,1}\otimes_{\Abb^\star}\HFLs_{1,2}\ra\HFLs_{0,2}.
\end{align*}	   
Moreover, considering different possible degenerations of a holomorphic quadrilateral of Maslov index $0$ and using the standard arguments of Floer theory, we obtain the relation
\begin{equation}\label{eq:support-comparison}
	\fmap^\star_2\left(\x\otimes\fmap^\star_0(\theta^\star_{3}\otimes\theta^\star_{1})\right)
	=\fmap^\star_1\left(\fmap^\star_3(\x)\right),\quad\forall\ \x\in\HFLs_{0,1}.
\end{equation}	
The crucial observation is that
$H_0$ corresponds to a point-shift cobordism, from a pointed unlink in a connected sum of $S^2\times S^1$s to itself, which moves $(k-1)$ pairs of adjacent and alternating basepoints in $\wpoint$ and $\zpoint$ on an unlink component hosting $k$ pairs of basepoints to another unlink component hosting $m-k+1$ pairs of basepoints. Therefore, 
\begin{align*}
\fmap^\star_0(\theta^\star_{3}\otimes\theta^\star_{1})=\zvar\cdot \theta^\star_2
\quad\overset{\text{by (\ref{eq:support-comparison})}}{\Longrightarrow}\quad
\zvar	\fmap^\star_2\left(\x\right)
=\fmap^\star_1\left(\fmap^\star_3(\x)\right),\quad\forall\ \x\in\HFLs_{0,1}.	
\end{align*}	
Moreover, $H_1$ corresponds to shifting the basepoints $w$ and $z$ from one component to another component, and correspondingly  changing a decoration $\Link_1$ of $L_1$ to the decoration $\Link_1'$ of $L_1$. Let us denote the decoration obtained by removing $w$ and $z$ from $\Link_1$ (or equivalently, from $\Link'_1$) by
 $\Link_1''$. It then follows from Proposition~\ref{prop:point-shift} that $\HFLs(Y_1,\Link'',\spinc_1)$ is naturally a subset of both $\HFLs(Y_1,\Link_1,\spinc_1)$ and $\HFLs(Y_1,\Link_1',\spinc_1)$ (for every $\spinc_1\in\SpinC(Y_1)$), as the kernel of the action of $\Phi_w$. Moreover, $\fmap^\star_1$ induces a map
\begin{align*}
\fmap^\star_1=\zvar\cdot Id:\HFLs(Y_1,\Link'')\ra \HFLs(Y_1,\Link'').
\end{align*}
 Moreover, it follows that $\HFKs(Y_1,L_1)\subset  \HFLs(Y_1,\Link''_1)$.
If $\x\in\HFKsw(Y_2,L_2)\subset\HFLs_{0,1}$, it follows that $\x=\zvar\y$ for some $\y\in\HFLs_{0,1}$ and that
\begin{align*}
\gmap_{X,\SCob'}(\x)=
\zvar\fmap_{X,\SCob'}(\y)
=\zvar\cdot\fmap_2(\y)
=\fmap_1(\fmap_3(\y))=\zvar\cdot \fmap_3(\y)
=\zvar\fmap_{X,\SCob''}(\y)=\gmap_{X,\SCob''}(\x).	
\end{align*}	
This completes the proof of the theorem.
\end{proof}	

\subsection{Weak Heegaard-Floer groups and the link cobordism TQFT}\label{subsec:link-tqft}
Let us assume that $(X,\Surface):(Y,L)\ra (Y',L')$ is a link cobordism and that $\SCob$ and $\SCob'$ are two decorations of $\Surface$. Note that either of these two decorations give marked points on either of $L$ and $L'$, giving them the structure of a pointed link.
Theorem~\ref{thm:perturbation-invariance-general} implies that although $\gmap^\star_{X,\SCob,\spinct}$ and $\gmap^\star_{X,\SCob',\spinct}$ may be different  as maps from $\HFKs(Y,L,\spinct|_{Y})$ to $\HFKs(Y',L',\spinct|_{Y'})$ (for $\spinct\in\SpinC(X)$), their restrictions to  $\HFKsw(Y,L,\spinct|_{Y})$ are the same. This is probably our most crucial observation.
Theorem~\ref{thm:uniqueness-of-trisection}, Theorem~\ref{thm:invariance-decorated-case}, Theorem~\ref{thm:twist-invariance} and 
Theorem~\ref{thm:perturbation-invariance-general}   imply the following conclusion.
\begin{thm}\label{thm:invariance}
Associated with every link cobordism $(X,\Surface):(Y,L)\ra (Y',L')$ and every $\SpinC$ structure $\spinct\in\SpinC(X)$ are well-defined {\emph{Heegaard-Floer cobordism maps}}  
\begin{align*}
\gmap_{X,\Surface,\spinct}^\star:\HFKsw(Y,L,\spinct|_{Y})\ra \HFKsw(Y',L',\spinct|_{Y'}),\quad\text{for}\ \ \star=-,\circ.	
\end{align*}		
If $(X,\Surface)=(X_1,\Surface_1)\circ (X_2,\Surface_2)$ is the composition of two link cobordisms $(X_j,\Surface_j)$, $j=1,2$, we have
\begin{align*}
\gmap_{X_1,\Surface_1,\spinct_1}^\star\circ\gmap_{X_2,\Surface_2,\spinct_2}^\star=
\sum_{\substack{\spinct\in\SpinC(X)\\ \spinct|_{X_j}=\spinct_j,\ j=1,2}}\gmap_{X,\Surface,\spinct}^\star,\quad\forall\ \spinct_1\in\SpinC(X_1),\ \spinct_2\in\SpinC(X_2),\ \star\in\{-,\circ\}.	
\end{align*}	
\end{thm}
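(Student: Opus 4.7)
The plan is to define the map and then check independence of choices by assembling the four cited theorems. Given a link cobordism $(X,\Surface):(Y,L)\ra(Y',L')$ and $\spinct\in\SpinC(X)$, pick any decoration $\SCob$ of $\Surface$, producing a decorated link cobordism $(X,\SCob):(Y,\Link)\ra(Y',\Link')$ (whose existence is part of Theorem~\ref{thm:uniqueness-of-trisection}). Theorem~\ref{thm:invariance-decorated-case} then yields, for $\star\in\{-,\circ\}$, the $\Abb^\star$-homomorphism $\gmap_{X,\SCob,\spinct}^\star:\HFKs(Y,L,\spinct|_{Y})\ra\HFKs(Y',L',\spinct|_{Y'})$. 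Since this is $\Abb^\star$-linear, it carries $\zvar\cdot\HFLs(Y,\Link,\spinct|_{Y})$ into $\zvar\cdot\HFLs(Y',\Link',\spinct|_{Y'})$, so it restricts to a map $\HFKsw(Y,L,\spinct|_{Y})\ra\HFKsw(Y',L',\spinct|_{Y'})$. I would tentatively define $\gmap_{X,\Surface,\spinct}^\star$ as this restriction.

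Next I need to show that this restriction is independent of the chosen decoration. Let $\SCob$ and $\SCob'$ be two decorations of $\Surface$. The second part of Theorem~\ref{thm:uniqueness-of-trisection} produces twist cobordisms $(Y\times I,\SCob_Y)$ and $(Y'\times I,\SCob_{Y'})$ such that after composing with these at the boundaries, the resulting decorations of $\Surface$ (equivalently, the associated nice trisection diagrams) are related by a finite sequence of slide-diffeomorphisms, (de)stabilizations, and (de)perturbations. Slide-diffeomorphisms and (de)stabilizations preserve the induced $\Abb^\star$-homomorphism by Theorem~\ref{thm:invariance-decorated-case}, while (de)perturbations preserve its restriction to $\HFKsw$ by Theorem~\ref{thm:perturbation-invariance-general}. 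For the pre- and post-composed twist cobordisms, Theorem~\ref{thm:twist-invariance} shows that $\gmap^\star_{Y\times I,\SCob_Y,\spinc}$ and $\gmap^\star_{Y'\times I,\SCob_{Y'},\spinc'}$ equal the identity on $\HFKs$, hence on the submodule $\HFKsw$. Composing these three observations, $\gmap_{X,\SCob,\spinct}^\star$ and $\gmap_{X,\SCob',\spinct}^\star$ agree on $\HFKsw(Y,L,\spinct|_{Y})$, which makes $\gmap_{X,\Surface,\spinct}^\star$ well-defined.

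For the composition law, given $(X,\Surface)=(X_1,\Surface_1)\circ(X_2,\Surface_2)$, I would first fix a decoration $\Link_{\mathrm{mid}}$ of the intermediate link $(Y_{\mathrm{mid}},L_{\mathrm{mid}})$ and choose decorations $\SCob_1$ and $\SCob_2$ of $\Surface_1$ and $\Surface_2$ whose induced pointed structures on $L_{\mathrm{mid}}$ both agree with $\Link_{\mathrm{mid}}$. (Such decorations always exist: start from any decoration of each piece and adjust near the common boundary using Theorem~\ref{thm:uniqueness-of-trisection}; any residual boundary mismatch is reabsorbed into a twist cobordism, whose map is the identity on $\HFKsw$ by Theorem~\ref{thm:twist-invariance}.) Gluing gives a decoration $\SCob=\SCob_1\circ\SCob_2$ of $\Surface$, and the composition law at the decorated level (standard from Theorem~\ref{thm:invariance-decorated-case}, via a stretch-the-neck/triangle-counts argument over nice trisection diagrams obtained by gluing along the common $Y_{\mathrm{mid}}$-diagram) reads
\[
\gmap^\star_{X_1,\SCob_1,\spinct_1}\circ \gmap^\star_{X_2,\SCob_2,\spinct_2}
=\sum_{\substack{\spinct\in\SpinC(X)\\ \spinct|_{X_j}=\spinct_j}}
\gmap^\star_{X,\SCob,\spinct}.
\]
Restricting both sides to $\HFKsw$ and using well-definedness from the previous paragraph yields the claimed identity.

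The main obstacle I anticipate is the composition step, specifically arranging decorations to match along the shared boundary so that the decorated composition law of Theorem~\ref{thm:invariance-decorated-case} can be invoked cleanly. Verifying that boundary mismatches are absorbed by twist cobordisms (and therefore invisible on $\HFKsw$) is the content of the argument; the two ingredients that make this work are Theorem~\ref{thm:twist-invariance} (twists act as the identity) and Theorem~\ref{thm:perturbation-invariance-general} (perturbations act trivially on $\HFKsw$).
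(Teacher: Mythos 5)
Your argument is correct and is exactly the route the paper intends: the paper does not spell out a detailed proof but simply cites Theorems~\ref{thm:uniqueness-of-trisection}, \ref{thm:invariance-decorated-case}, \ref{thm:twist-invariance} and \ref{thm:perturbation-invariance-general}, and your proposal fills in the details in the same way (choose a decoration, restrict to $\HFKsw$, and use twist invariance plus perturbation invariance to absorb the choice of decoration and boundary mismatches). The only thing worth flagging is that the decorated-level composition law you invoke is not stated as a numbered theorem in this paper, but it is part of the cited functoriality results (\cite{AE-2},\cite{Zemke-1}) underlying Theorem~\ref{thm:invariance-decorated-case}, so this is an acceptable appeal rather than a gap.
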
	

Theorem~\ref{thm:invariance} provides us with the (hat) Heegaard-Floer link  TQFT
\begin{align*}
\HFKhw:\Links\ra\HFHAhat,	
\end{align*}	
which is given by associating the $\Ringh$-module 
\begin{align*}
\HFKhw(Y,L)=\sum_{\spinc\in\SpinC(Y)}\HFKhw(Y,L,\spinc)
\end{align*}
to a link $(Y,L)$ and the $\Ringh$-homomorphism 
\begin{align*}
\HFKhw(X,\Surface):=\gmap_{X,\Surface}^\circ=\sum_{\spinct\in\SpinC(X)}\gmap_{X,\Surface,\spinct}^\circ: \HFKhw(Y,L)\ra \HFKhw(Y',L')
\end{align*}
to every link cobordism $(X,\Surface):(Y,L)\ra (Y',L')$.

\section{Basic properties and computations of Heegaard-Floer link TQFT}\label{sec:properties}
\subsection{Weak Heegaard-Floer homology groups: some examples}\label{subsec:examples} 
Before proceeding with a few results about the TQFTs  $\HFKmw$ and $\HFKhw$, let us examine the  groups $\HFKmw(S^3,K)$ and $\HFKhw(S^3,K)$ in a few examples, which are  borrowed from \cite[Section 5]{AE-unknotting}. Unlike the ordinary Heegaard-Floer groups, these groups are not necessarily non-trivial for non-trivial knots. The computations illustrate how some of the complexity of Heegaard-Floer groups is lost in exchange for making the cobordism maps independent of the decoration.  

\begin{ex}\label{ex:torus-knot}
	Let $K=T_{p,q}$ be the $(p,q)$ torus knot with $0<p<q$  and  
	\[\Delta_{K}(t)=t^{-\frac{(p-1)(q-1)}{2}}\frac{(t^{pq}-1)(t-1)}
	{(t^p-1)(t^q-1)}=\sum_{i=0}^{2n}(-1)^it^{a_i}\]
	be the symmetrized Alexander polynomial of $K$, for   	
	integers $a_0>\cdots>a_{2n}$ (with $a_i=-a_{2n-i}$). Set $b_{0}=0$ and $b_i=-2\sum_{j=1}^i(-1)^ja_j$ for $i=1,\ldots,2n$. For generators $\x_i$ in bi-grading $(-b_{2i},-b_{2(n-i)})$ (with $i=0,\ldots,n$), we then find
	\begin{align*}
		\HFKm(K)=\frac{\big\langle \x_0,\ldots,\x_n\big\rangle_\Ringm}{\left\langle \zvar^{a_{2i-1}-a_{2i}}\x_{i}-\wvar^{a_{2i-2}-a_{2i-1}}\x_{i-1}\ \big|\ i=1,\ldots,n\right\rangle_\Ringm}.
	\end{align*}
	Therefore, $\HFKmw(K)=\HFKm(K)\llb0,2\rrb$. Moreover, setting $\y_i=\zvar\cdot\x_i$, we have 
	\begin{align*}
		\HFKhw(K)=\frac{\big\langle \y_0,\ldots,\y_n\big\rangle_\Ringh}{\left\langle \zvar^{a_{2i-1}-a_{2i}-1}\y_{i}\ \big|\ i=1,\ldots,n\right\rangle_\Ringh}=\Ringh\llb0,2a_0+2\rrb\oplus
		\Big(\bigoplus_{i=1}^{n}\frac{\Ringh\llb b_{2i}, b_{2(n-i)}+2\rrb}
		{\langle\zvar^{a_{2i-1}-a_{2i}-1}\rangle_{\Ringh}}\Big).
	\end{align*}		
Note that $a_0=(p-1)(q-1)/2$ is the Seifert genus of $T_{p,q}$ which is also equal to $\tau(T_{p,q})$. We may also compute $\HFLhh(K)$ and $\HFKhh(K)$. For this purpose, suppose that $\z_i$ is a generator in bi-grading $(-b_{2i-1},-b_{2(n-i)+1})$, where
$b_{2i-1}:=b_{2i}-1$  for $i=1,\ldots,n$.
\begin{align*}
\HFLhh(K)&=\big\langle \x_0,\ldots,\x_n,\z_1,\ldots,\z_n\big\rangle_\Fbb 
=\Big(\bigoplus_{i=0}^n \Fbb\llb b_{2i},b_{2(n-i)}\rrb\Big)\oplus
\Big(\bigoplus_{j=1}^n\Fbb\llb b_{2j-1},b_{2(n-j)+1}\rrb\Big)
\quad\text{and}\\
\HFKhh(K)
&=\Big(\bigoplus_{i=0}^n \Fbb\llb b_{2i},b_{2(n-i)}\rrb\Big)\oplus
\Big(\bigoplus_{j: a_{j-1}-a_j>1}\Fbb\llb b_{2j-1},b_{2(n-j)+1}\rrb\Big).
\end{align*}	
\end{ex}

\begin{ex}
	If $K$ is the figure $8$ knot, as discussed in \cite[Example 5.6]{AE-unknotting},
	$\HFKs(K)$ is generated by two generators $\x$ and $\y$ in bi-degree $(0,0)$, while $\wvar\y$ and $\zvar\y$ are both zero. In particular, $\HFKh(K)$ is different from the knot Floer homology group $\HFLh(K)$ (which has $3$ generators $\x,\y,\z$, with $\zvar\y=0$, $\zvar\z=0$ and $\Phi(\z)=\y$). 
	Moreover, $\HFKsw(K)$  is generated by $\zvar\x$ and we thus have
	\[\HFKsw(K)=\HFKsw(U_1)=\Abb^\star\llb0,2\rrb,\quad\star=-,\circ.\] 
\end{ex}

\begin{ex}\label{ex:alternating}
	Let  $K$ be a pointed alternating knot with $\tau(K)>0$.	Since alternating knots  have simple knot Floer chain complexes, as discussed in \cite[Example 5.7]{AE-unknotting}, it follows   that 
	\begin{align*}
		&\HFKs(K)=\HFsTorsion(K)\oplus\HFsTF(K),\quad\HFhTF(K)=\Ringh\llb0,2\tau(K)\rrb\\
		&\text{and}\quad\quad
		\HFmTF(K)=\left\langle\wvar^{i}\zvar^{\tau(K)-i}\ \big|\ i=0,1,\ldots,\tau(K)\right\rangle_\Ringm.	
	\end{align*}	
Here $\HFsTorsion(K)$ and $\HFsTF(K)=\HFKs(K)/\HFsTorsion(K)$ denote the torsion and the torsion-free part of $\HFKs(K)$, respectively. 	
	Moreover, 
	\begin{align*}
		\HFKhw(K)=\Ringh\llb0,2\tau(K)+2\rrb\quad\text{and}\quad
		\HFKmw(K)=\left\langle\wvar^{i}\zvar^{\tau(K)-i+1}\ \big|\ i=0,1,\ldots,\tau(K)\right\rangle_\Ringm\llb0,2\rrb.
	\end{align*}		
\end{ex}

\begin{figure}
	\begin{center}	
		\includegraphics[scale=0.63]{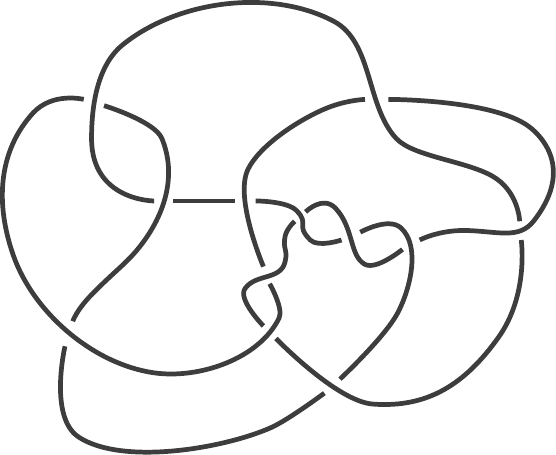}\quad\quad
		\includegraphics[scale=0.35]{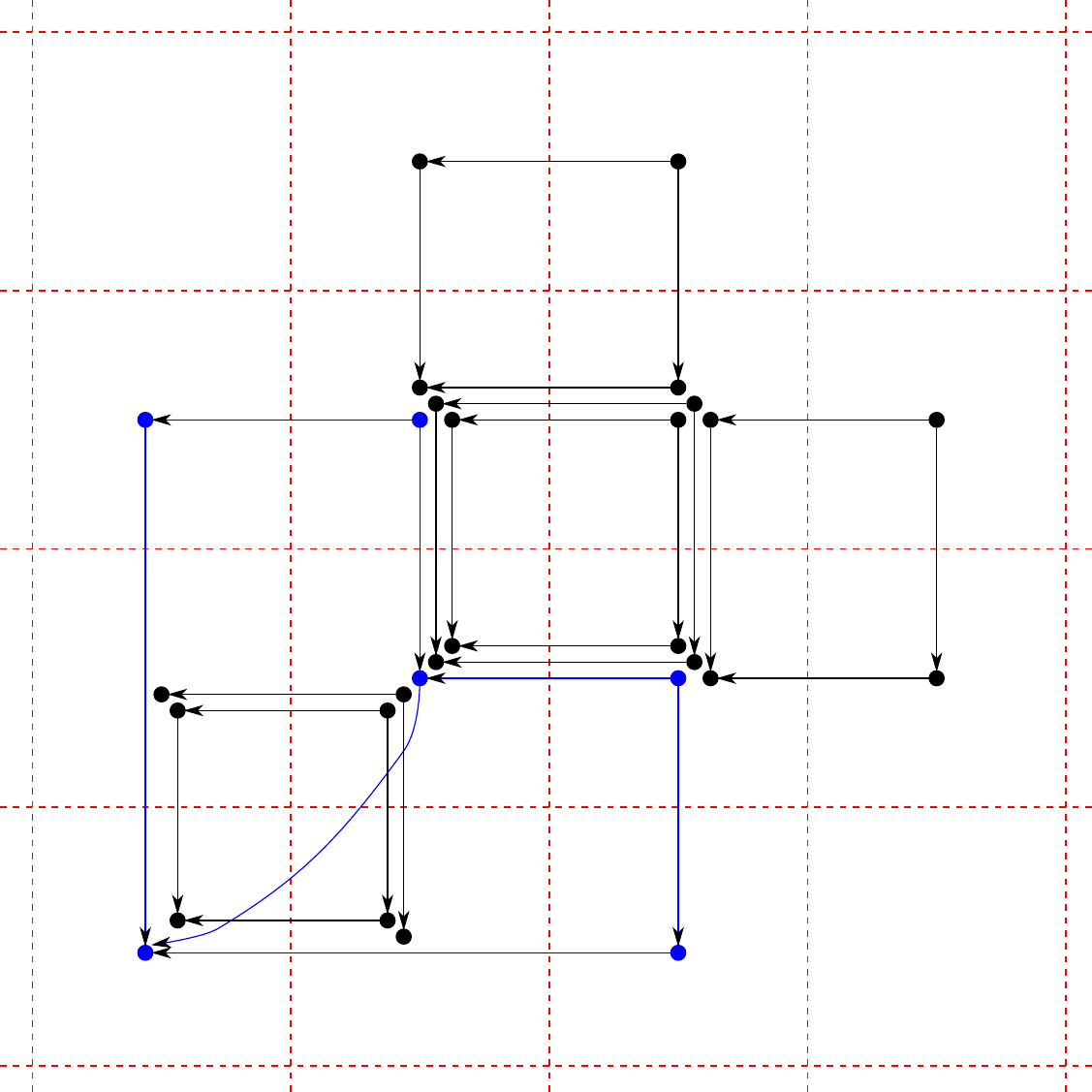}
		\caption{
			The knot $12n_{404}$ and its corresponding chain complex.
		}\label{fig:12n404}
	\end{center}
\end{figure}  
\begin{ex}
	The $(1,1)$ knot $K=12n_{404}$ is illustrated in 
	Figure~\ref{fig:12n404} (left).  The corresponding chain complex $\CFLm(K)$ is discussed in \cite[Example 5.10]{AE-unknotting}, and is  illustrated in Figure~\ref{fig:12n404} (right). Each dot represents a generator of $\CFLm (K)$. An arrow which 
	connects a dot corresponding to a generator $\x$ to a dot 
	representing a generator $\y$ and cuts $i$ vertical lines and 
	$j$ horizontal lines corresponds to the contribution of 
	$\wvar^i\zvar^j\y$ to $d(\x)$. We then have 
	\begin{align*}
	\HFKm(12n_{404})=\langle\zvar,\wvar\rangle_\Ringm\oplus\left(\frac{\Ringm}{\langle\wvar,\zvar\rangle}\right)^5\oplus \frac{\Ringm}{\langle\wvar^2,\wvar\zvar,\zvar^2\rangle}\ \ \text{and}\ \ 
		\HFKmw(12n_{404})=\langle\zvar,\wvar\rangle_\Ringm\oplus\frac{\Ringm}{\langle\wvar,\zvar\rangle},
	\end{align*}		
where the computations are  valid up to shifts in the homological bi-gradings. A similar computation implies that $\HFKhw(12n_{404})=\Ringh\oplus\frac{\Ringh}{\langle\zvar\rangle}$.
\end{ex}

\begin{ex}
	Let $K=T_{2,3;2,-1}$ and $K'=-T_{2,3;2,-3}$ denote the $(2,-1)$ cable of the torus knot $T_{2,3}$, and the mirror image of the $(2,-3)$ cable of it, respectively. The chain complexes associated 
	with $K$ and $K'$ are  illustrated in Figure~\ref{fig:TCable-1} on the left and right, respectively.   It follows from the computations of \cite[Section 5]{AE-unknotting} that 
	\begin{align*}
		& \HFKm(T_{2,3;2,-1})=\HFmTF(T_{2,3;2,-1})\oplus \HFmTorsion(T_{2,3;2,-1})
		=\langle \wvar,\zvar\rangle_\Ringm\oplus 
		\frac{\langle \wvar,\zvar\rangle_\Ringm}
		{\langle \wvar^2,\zvar^2 \rangle_\Ringm}\\
		& \HFKm(-T_{2,3;2,-3})=\HFmTF(-T_{2,3;2,-3})\oplus \HFmTorsion(-T_{2,3;2,-3})
		=\langle \wvar,\zvar\rangle_\Ringm\oplus
		\frac{\Ringm}{\langle \wvar,\zvar^2\rangle_\Ringm}\oplus
		\frac{\Ringm}{\langle\wvar^2,\zvar\rangle_\Ringm},\\
		&\Rightarrow\quad 
		\HFKmw(T_{2,3;2,-1}),\HFKmw(-T_{2,3;2,-3})=\langle \wvar,\zvar\rangle_\Ringm\oplus 
		\frac{\Ringm}{\langle \wvar,\zvar\rangle_\Ringm}.
	\end{align*}
Again, the identifications are up to shifts in homological bi-grading.
It follows from  a similar computation that $\HFKhw(T_{2,3;2,-1}),\HFKhw(-T_{2,3;2,-3})=\Ringh\oplus\frac{\Ringh}{\zvar}$.
	In particular, both of these cables of $T_{2,3}$  share the same weak Heegaard Floer homology group (upto a shift in homological bi-grading), although $\HFKm(K)$ and $\HFKm(K')$ are completely different.
\end{ex}
\begin{figure}
	\begin{center}	
		\def\svgwidth{4.6cm}
		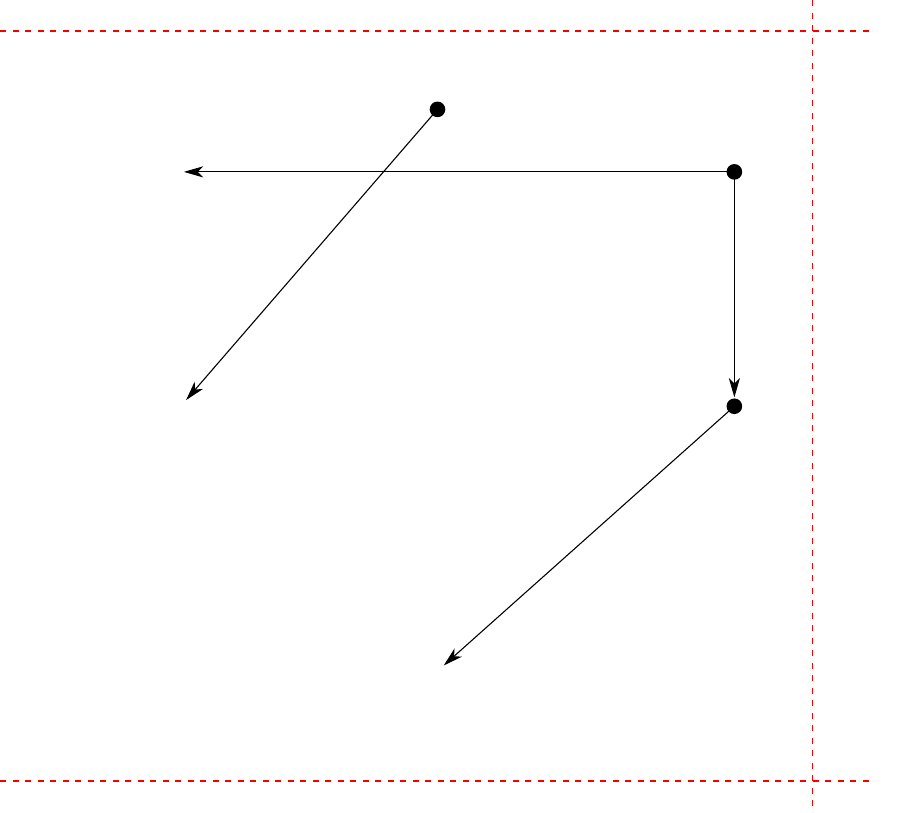
		\quad\quad
		\def\svgwidth{5.8cm}
		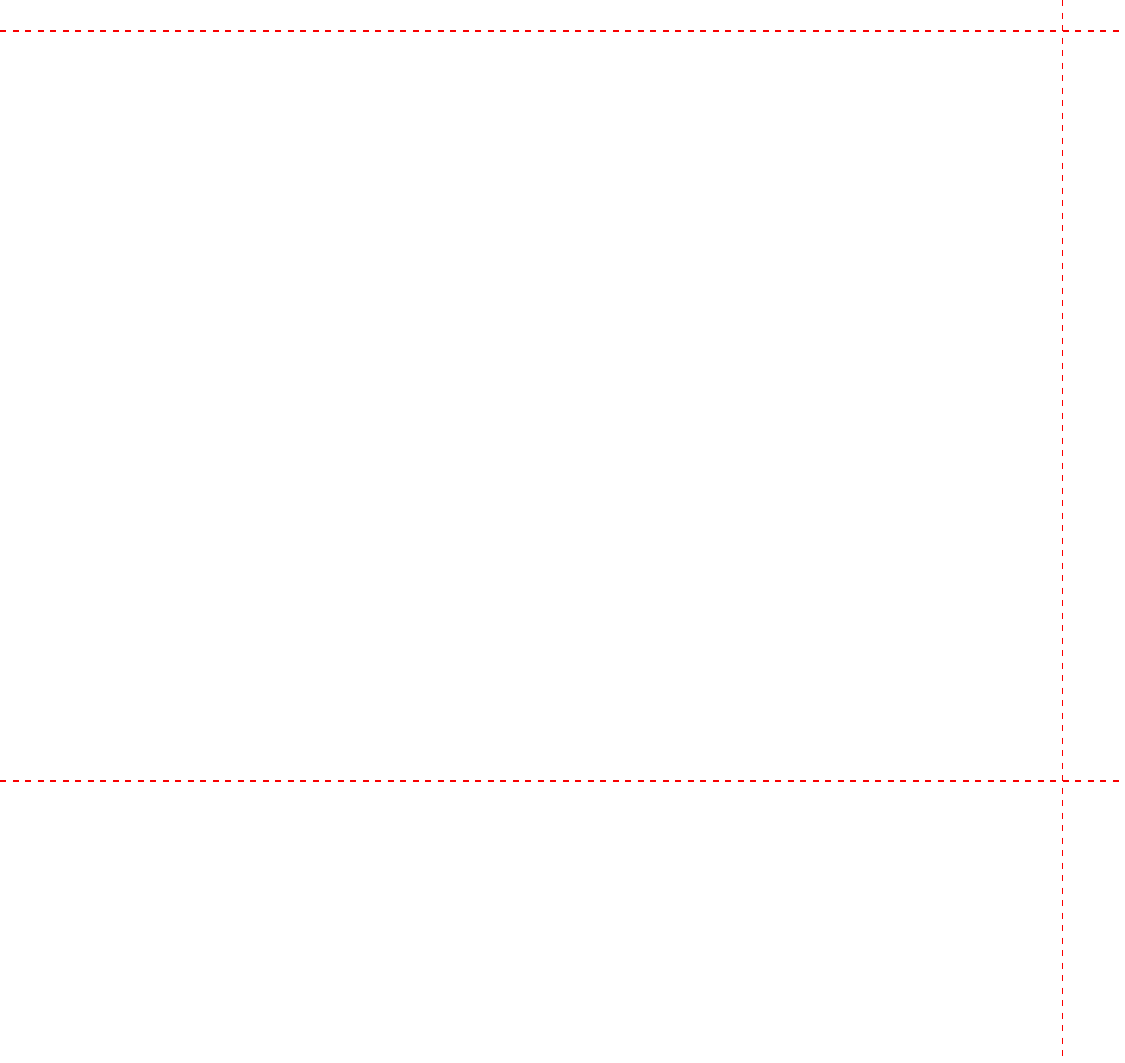
		\caption{
			The chain complex associated with the knots $T_{2,3;2,-1}$ (left) and $-T_{2,3;2,-3}$ (right).
		}\label{fig:TCable-1}
	\end{center}
\end{figure}

\begin{ex} 
	For $K=T_{4,5}\#-T_{2,3;2,5}\#T_{2,3}$, we have
	$\CFLm(K)=C\oplus C'$, where  the chain complex $C$ is illustrated in 
	Figure~\ref{fig:TCable-3} and the homology of $C'$ is freely generated 
	by elements $\{\tb_i\}_{i=1}^m$ such that $\wvar \tb_i=\zvar \tb_i=0$. 
	The homology of $C$ is generated by the classes
	$\z_i=[Z_i]$ for $i=1,2,3,4$, satisfying
	\[\wvar \z_1=\zvar \z_2,\quad \wvar \z_3=\zvar \z_4,\quad
	\wvar\zvar \z_2=\zvar^2 \z_3\quad 
	\text{and}\quad \wvar^2 \z_2=\wvar\zvar \z_3.\]
	In particular, $\tb=\wvar \z_2-\zvar \z_3$ is  torsion, and 
	$\wvar \tb=\zvar \tb=0$.
	We then have a short exact sequence 
	\begin{center}
		\begin{tikzcd}
			0\arrow[r]&\HFmTorsion(K)=\big(\frac{\Ringm}{\langle \wvar,\zvar\rangle}\big)^{m+1}\arrow[r]&
			H_*(C)\arrow[r] &\HFmTF(K)=
			\langle \wvar^3,\wvar^2\zvar,\wvar\zvar^2,\zvar^3\rangle\arrow[r]
			&0,
		\end{tikzcd}
	\end{center}
	which does not split. The generators in $\HFmTorsion(K)$ are killed after multiplication by $\zvar$. Therefore, 
	\begin{align*}
		\HFKmw(K)=\langle \wvar^3,\wvar^2\zvar,\wvar\zvar^2,\zvar^3\rangle_\Ringm\quad\text{and}\quad 
		\HFKhw(K)=\Ringh\oplus\Ringh.	
	\end{align*}	
	In particular, the non-split module $\HFKm(K)$ is reduced to a torsion-free module $\HFKmw(K)$ when we go from ordinary Heegaard-Floer groups to weak Heegaard-Floer groups.
\end{ex}

\subsection{The link cobordism TQFT: some examples}\label{subsec:tqft-examples}
We would now like to compute the cobordism maps in a number of simple examples. We work out $\HFKh$, which is easier to compute.  

\begin{figure}
	\begin{center}
		\def\svgwidth{5.8cm}
		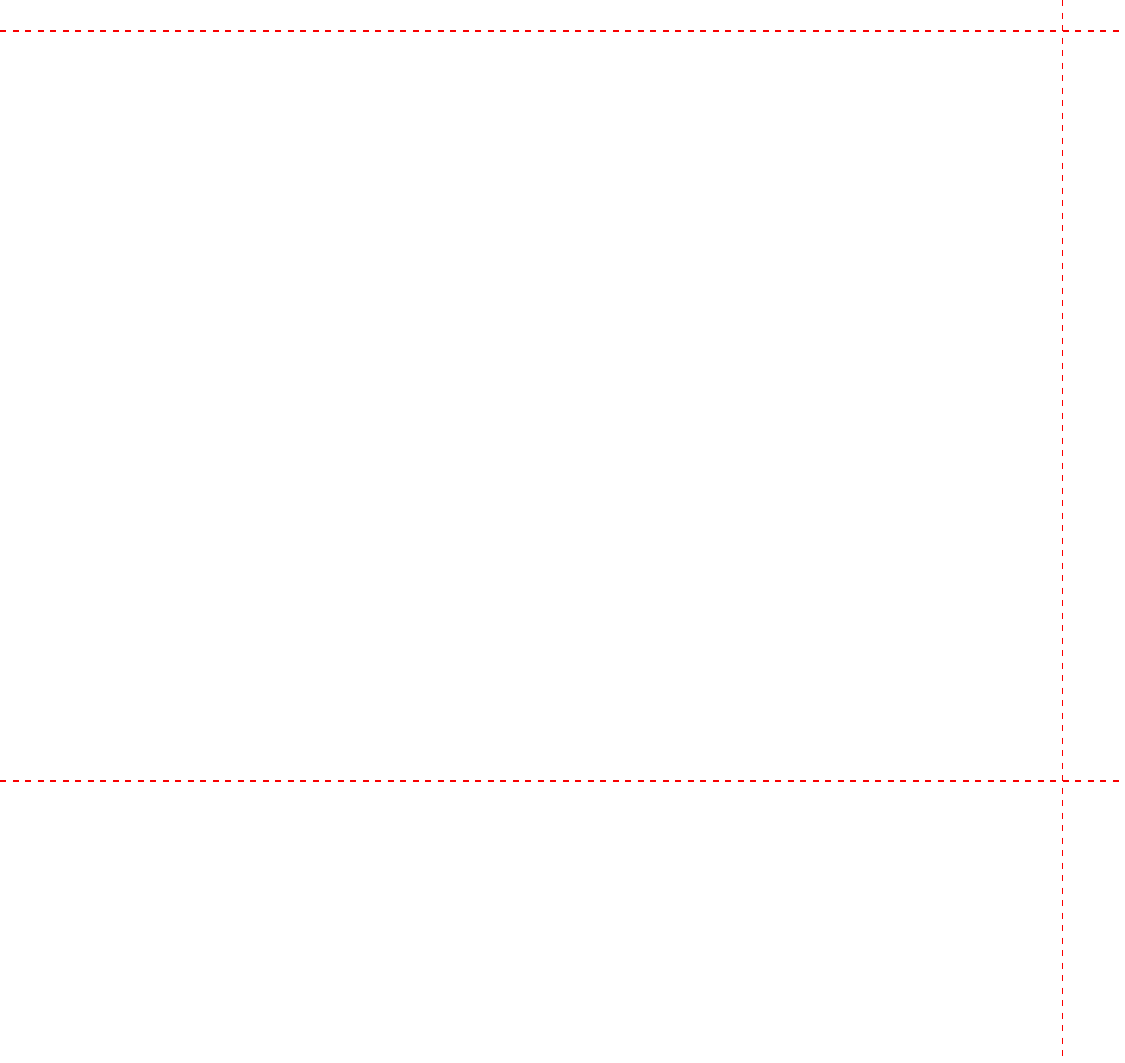
		\caption{
			The chain complex $C$ associated with the knot $T_{4,5}\#-T_{2,3;2,5}\#T_{2,3}$.
		}\label{fig:TCable-3}
	\end{center}	
\end{figure}

\begin{ex}\label{ex:unknot-to-hopf}
Figure~\ref{fig:unknot-to-hopf}(a) illustrates a trisection diagram on the sphere $S^2$ which is given as
\begin{align*}
H=(S^2,\alpha_0,\alpha_1,\alpha_2,\wpoint=\{w_1,w_2\},\zpoint=\{z_1,z_2\})
\end{align*}
 and corresponds to the genus-$0$ cobordism from a $2$-pointed unknot to a Hopf link (with a single marked point on each one of its components). The decoration of the surface is by a pair of arcs connecting the unknot to the components of the Hopf link $L$. Let us denote the aforementioned decorated link cobordism by 
 \begin{align*}
(X,\Surface_0,\Path_0)= (S^3\times[0,1],\Surface_0,\Path_0):(S^3,U,\ppoint_U)\ra (S^3,L,\ppoint_L).	
 \end{align*}	
Some of the intersection points in the  diagram are labeled. In particular, the top intersection point between $\alpha_1$ and $\alpha_2$ is labeled $\theta$.
 It is not hard to see that  	
\begin{align*}
\HFLh(S^3,U,\ppoint_U)=\langle x,y \rangle_\Ringh\quad\text{and}\quad 
\HFLh(S^3,L,\ppoint_L)=\frac{\langle x_{0,0},x_{1,1},x_{0,1}+x_{1,0}\rangle_\Ringh}
{\langle \zvar x_{1,1}\rangle_\Ringh}. 
\end{align*} 	
Moreover, the action of $\Phi=\Phi^\circ_{w_1}=\Phi^\circ_{w_2}$ on $\HFLh(S^3,U,\ppoint_U)$ and 
$\HFLh(S^3,L,\ppoint_L)$ is given by 
\begin{align*}
\Phi(y)=x,\quad\Phi(x)=0,\quad\Phi(x_{1,1})=0,\quad\Phi(x_{0,0})=0\quad\text{and}\quad
\Phi(x_{0,1}+x_{1,0})=x_{1,1}.
\end{align*}
In particular, from the above computations we find 
\begin{align*}
	\HFKh(S^3,U)=\langle x \rangle_\Ringh=\Ringh\quad\text{and}\quad 
	\HFKh(S^3,L)=\frac{\langle x_{0,0},x_{1,1}\rangle_\Ringh}
	{\langle \zvar x_{1,1}\rangle_\Ringh}=\Ringh\llb 0,2\rrb\oplus\Fbb\llb 2,0\rrb. 
\end{align*}
The only triangle class which contributes to  $\gmap_{X,\Surface_0;\Path_0}^\circ(x)$ is the triangle $\Delta\in\pi_2(x,\theta,x_{0,0})$ which is shaded gray in Figure~\ref{fig:unknot-to-hopf}(a). In particular, $\gmap_{X,\Surface_0;\Path_0}^\circ(x)=\x_{0,0}$.	Note that $\gmap_{X,\Surface;\Path_0}^\circ$ shifts the homological bi-grading by $(0,-2)$.\\

In Figure~\ref{fig:unknot-to-hopf}(b), the mirror of the trisection diagram $H$ is presented. It provides a trisection diagram for the link cobordism
\begin{align*}
(-X,-\Surface_0,\Path_0):(S^3,U,\ppoint_U)\ra (S^3,-L,\ppoint_L).	
\end{align*}	  
As in  the above computation, and using the labeling of Figure~\ref{fig:unknot-to-hopf}(b) we then find
\begin{align*}
	\HFKh(S^3,U)=\langle \overline{y} \rangle_\Ringh=\Ringh\quad\text{and}\quad 
\HFKh(S^3,-L)=\frac{\langle \overline{x}_{0,1},\overline{x}_{1,0}\rangle_\Ringh}
{\langle \zvar \overline{x}_{0,1}=\zvar \overline{x}_{1,0}\rangle_\Ringh}=\Ringh\oplus\Fbb. 	
\end{align*}	
The only triangle class which contributes to  $\gmap_{-X,-\Surface_0;\Path_0}^\circ(x)$ is the triangle $\Delta\in\pi_2(\overline{y},\overline{\theta},\overline{x}_{0,1})$ which is shaded gray in Figure~\ref{fig:unknot-to-hopf}(b). In particular, $\gmap_{-X,-\Surface_0;\Path_0}^\circ(\overline{y})=\zvar \overline{x}_{0,1}=\zvar \overline{x}_{1,0}$.	
\end{ex}

\begin{figure}
	\def\svgwidth{0.7\textwidth}
	{\footnotesize{
			\begin{center}
				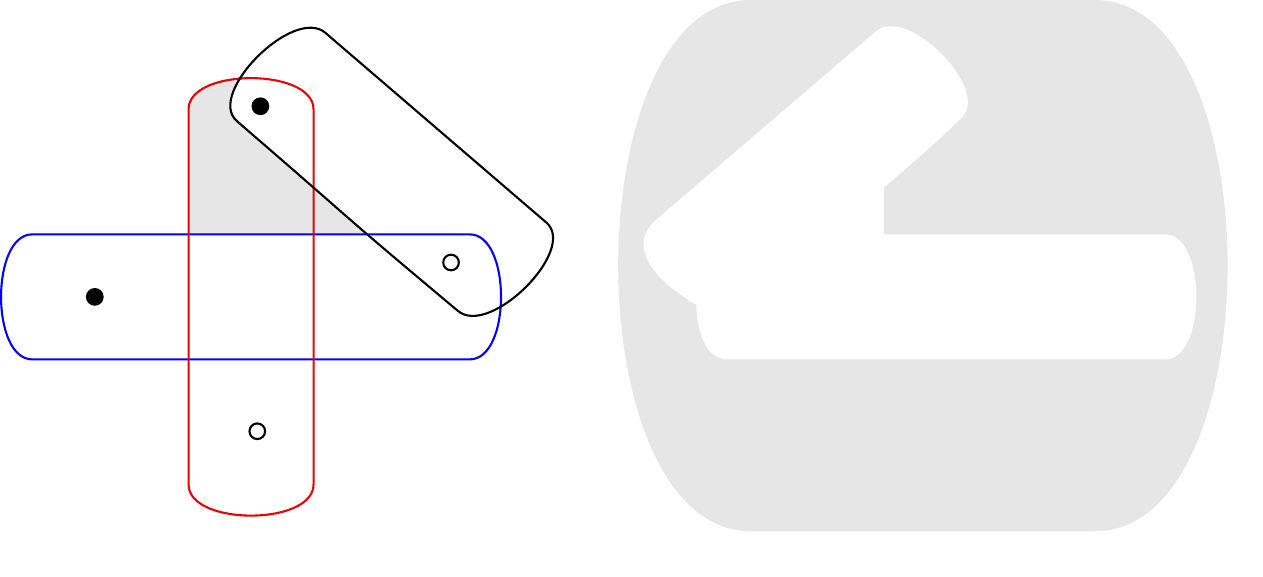
	\end{center}}}
	\caption{A  trisection diagram for a link cobordism of genus $0$ from the unknot to the Hopf link.}
	\label{fig:unknot-to-hopf}
\end{figure}  

\begin{ex}\label{ex:hopf-to-trefoil}
	A trisection diagram for a genus-$0$ cobordism from a $2$-pointed  Hopf link $L$ (with a single marked point on each one of its components) to a $2$-pointed left handed trefoil $K=T_{2,3}$ is illustrated in Figure~\ref{fig:hopf-to-trefoil} (a). The decorated cobordism and the diagram are  denoted by
	\begin{align*}
	(X,\Surface_1,\Path_1):(S^3,L,\ppoint_L)\ra (S^3,K,\ppoint_K)\quad\text{and}\quad	H=(S^2,\alpha_0,\alpha_1,\alpha_2,\wpoint=\{w_1,w_2\},\zpoint=\{z_1,z_2\})
	\end{align*}
respectively. As in the previous example (and compatible with the notation set there), some of the intersection points in the  diagram are labeled. 
We then have
	\begin{align*}
		&\HFLh(S^3,L,\ppoint_L)=\frac{\langle x_{0,0},x_{1,1},x_{0,1}+x_{1,0}\rangle_\Ringh}
		{\langle \zvar x_{1,1}\rangle_\Ringh}\quad\text{and}\\
		&\HFLh(S^3,K,\ppoint_K)=\frac{\langle y_0+y_{1,1},y_1,y_{0,0},y_{0,1}+y_{1,0} \rangle_\Ringh}{\langle \zvar y_{1},\zvar (y_{0}+y_{1,1})\rangle_{\Ringh}}. 
	\end{align*} 	
The action of $\Phi=\Phi^\circ_{w_1}=\Phi^\circ_{w_2}$ on $\HFLh(S^3,L,\ppoint_L)$ is already described in Example~\ref{ex:unknot-to-hopf}, while the aforementioned  action on  $\HFLh(S^3,K,\ppoint_K)$ is given by 
	\begin{align*}
		\Phi(y_1)=y_0+y_{1,1},\quad\Phi(y_{0,1}+y_{1,0})=y_{0,0},\quad\Phi(y_{0,0})=0\quad\text{and}\quad
		\Phi(y_{0}+y_{1,1})=0.
	\end{align*}
In particular, it follows that 
\begin{align*}
	&\HFKh(S^3,L)=\frac{\langle x_{0,0},x_{1,1}\rangle_\Ringh}
	{\langle \zvar x_{1,1}\rangle_\Ringh}=\Ringh\llb0,2\rrb\oplus\Fbb\llb2,0\rrb\quad\text{and}\\
	&\HFKh(S^3,K)=\frac{\langle y_{0,0},y_{0}+y_{1,1} \rangle_\Ringh}{\langle \zvar (y_0+y_{1,1})\rangle_\Ringh}=\Ringh\llb0,2\rrb\oplus\Fbb\llb2,0\rrb. 
\end{align*}
\begin{figure}
	\def\svgwidth{\textwidth}
	{\footnotesize{
			\begin{center}
				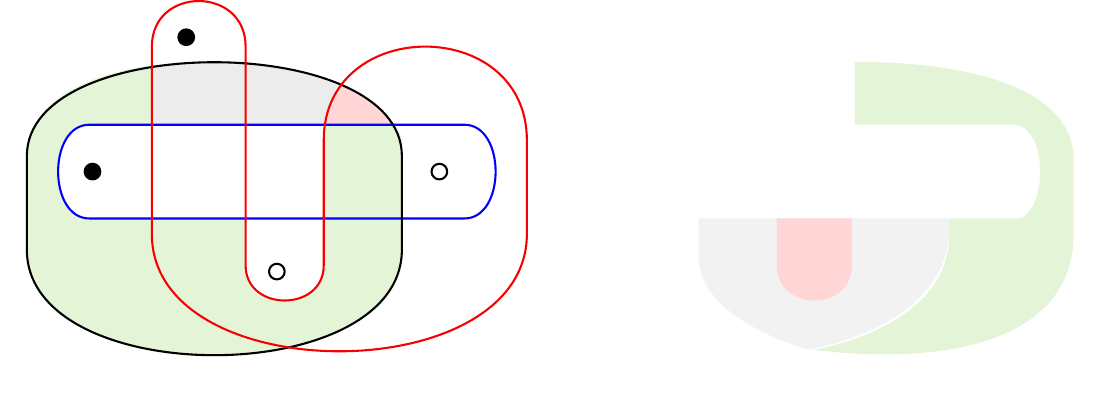
	\end{center}}}
	\caption{A  trisection diagram for a link cobordism of genus $0$ from the Hopf link to the trefoil.}
	\label{fig:hopf-to-trefoil}
\end{figure} 
The only triangle classes which contributes to  $\gmap_{X,\Surface_1;\Path_1}^\circ$ are the triangle classes $\Delta\in\pi_2(x_{1,1},\theta,y_{0})$, $\Delta'\in\pi_2(x_{0,0},\theta,y_{0,0})$ and $\Delta''\in\pi_2(x_{1,1},\theta,y_{1,1})$. The domain $\Dcal$ of $\Delta$ is shaded pink, while the domain $\Dcal'$ is the union of the pink and the gray region in  Figure~\ref{fig:hopf-to-trefoil}. The domain $\Dcal''$ of $\Delta''$ is given by $2$ times the pink region plus the sum of the gray and green domains.
In particular, $\gmap_{X,\Surface_1;\Path_1}^\circ(x_{0,0})=y_{0,0}$ and $\gmap_{X,\Surface_1;\Path_1}^\circ(x_{1,1})=y_0+y_{1,1}$. In other words, $\gmap_{X,\Surface_1;\Path_1}^\circ$ is an isomorphism.\\

As in the previous example, the mirror image of the same diagram (which is illustrated in Figure~\ref{fig:hopf-to-trefoil}(b)) describes a trisection diagram for 	
	\begin{align*}
	(-X,-\Surface_1,-\Path_1):(S^3,-L,\ppoint_L)\ra (S^3,-K,\ppoint_K).
\end{align*}
The Floer homology group $\HFKh(S^3,-L)$ may be computed from the diagram. With the labeling of the intersection points as in Figure~\ref{fig:hopf-to-trefoil}(b), we find
\begin{align*}
	&\HFKh(S^3,-L)=\frac{\langle \overline{x}_{0,1},\overline{x}_{1,0}\rangle_\Ringh}
	{\langle \zvar \overline{x}_{0,1}=\zvar \overline{x}_{1,0} \rangle_\Ringh}=\Ringh\oplus\Fbb\quad\text{and}\\
	&\HFKh(S^3,-K)=\frac{\langle \zvar\overline{y}_{1,0}=\zvar\overline{y}_{0,1},\overline{y}_{0}+\overline{y}_{1,1} \rangle_\Ringh}{\langle \zvar (\overline{y}_0+\overline{y}_{1,1})\rangle_\Ringh}=\Ringh\llb0,2\rrb\oplus\Fbb\llb-1,1\rrb. 
\end{align*}
There are two contributing triangle classes $\Delta_1\in\pi_2(\overline{x}_{1,0},\overline{\theta},\overline{y}_{1,0})$ and $\Delta_2\in\pi_2(\overline{x}_{0,1},\overline{\theta},\overline{y}_{0,1})$ with domains $\Dcal_1$ and $\Dcal_2$ which are obtained as the union of the pink domain containing $z_2$ in Figure~\ref{fig:hopf-to-trefoil}(b) with the gray and the green domain, respectively. 
Therefore, we have
\begin{align*}
\gmap^\circ_{-X,-\Surface_1;\Path_1}(\overline{x}_{1,0})=\gmap^\circ_{-X,-\Surface_1;\Path_1}(\overline{x}_{0,1})
=\zvar	\overline{y}_{1,0}=\zvar \overline{y}_{0,1}.
\end{align*}	 
\end{ex}

\begin{ex}\label{ex:u-to-trefoil}
The composition of the two decorated link cobordisms from Example~\ref{ex:unknot-to-hopf} and Example~\ref{ex:hopf-to-trefoil} is a decorated link cobordism 
\begin{align*}
	(X,\Surface,\Path)=(X,\Surface_1,\Path_1)\circ(X,\Surface_0,\Path_0):(S^3,U,\ppoint_U)\ra (S^3,T_{2,3},\ppoint_K).
\end{align*}	
The corresponding cobordism map is given by $\gmap_{X,\Surface;\Path}^\circ(x)=y_{0,0}$, and  is a homogeneous map of bi-degree $(0,-2)$ which  takes the generator of the group 
$\Ringh=\HFKh(S^3,U)$ to the non-torsion element in $\HFKh(S^3,K)=\Ringh\llb0,2\rrb\oplus\Fbb\llb2,0\rrb$. 	Therefore, $\gmap_{X,\Surface}^\circ:\Ringh\llb0,2\rrb\ra\Ringh\llb0,4\rrb$ is an isomorphism which drops the bi-degree by $(2,0)$. On the other hand, we have
\begin{align*}
	\gmap^\circ_{-X,-\Surface;\Path}(\overline{y})=\zvar^2\overline{y}_{1,0}=\zvar^2\overline{y}_{0,1}.
\end{align*}
In other words, the image of the generator of $\HFKh(S^3,U)=\Ringh$ under $\gmap^\circ_{-X,-\Surface;\Path}$ is $\zvar$ times the non-torsion generator of $\HFKh(S^3,-T_{2,3})=\Ringh\llb0,2\rrb\oplus\Ringhh\llb-1,1\rrb$. Therefore, $\gmap_{-X,-\Surface}^\circ:\Ringh\llb0,2\rrb\ra\Ringh\llb0,4\rrb$ is identified as $\zvar\cdot Id$. 	
\end{ex}

The next example considers the construction of Juh\'asz and Zemke from \cite{JZ-slice-disks} of different slice disks for a knot of the form $K\#-K$. Although we only consider the case where $K$ is the figure-eight knot, most of the argument remains valid for roll-spinning along arbitrary knots. Example~\ref{ex:slice-disks-for-F8} also implies that $\HFKh$ is an invariant of smooth isotopy, rather than topological isotopy.

\begin{ex}\label{ex:slice-disks-for-F8}
	Let $K$ denote the figure-eight knot and $f:(S^3,K)\ra (S^3,K)$  denote the diffeomorphism which fixes a point $p$ on $K$ and gives rise to roll-spinning. Let  $D,D'=D_{f,p}\subset B^4$ denote the properly embedded disks with boundary $-K\#K$ which are determined by the identity and the map $f$, respectively, as discussed in \cite{JZ-slice-disks}. Note that the diffeomorphism types of the pairs $(B^4,D)$ and $(B^4,D')$ are the same by \cite[Proposition 3.2]{JZ-slice-disks}.
	We may  assume that  the knot chain complex $C=C_K$ for $K$ is $C=\langle x,x_0,x_1,y_0,y_1\rangle_{\Ringm}$ and that the differential $d$ of $C$ is given by 
	\begin{align*}
		d(x)=0,\quad d(x_0)=0,\quad d(x_1)=\zvar x_0,\quad d(y_0)=\wvar x_0\quad\text{and}\quad
		d(y_1)=\zvar y_0+\wvar x_1.	
	\end{align*}	   
	The dual complex $(C^*,d^*)$ is then generated by $x^*,x_0^*,x_1^*,y_0^*$ and $y_1^*$, while we have
	\begin{align*}
		d^*(x^*)=0,\quad d^*(y_1^*)=0,\quad d^*(x^*_1)=\wvar y_1^*,\quad d^*(y^*_0)=\zvar y_1^*\quad\text{and}\quad
		d^*(x_0^*)=\wvar y_0^*+\zvar x_1^*.	
	\end{align*}
	The chain map $\fmap$ associated with $f$ is given by the identity on $x,x_0,x_1,y_0$, while $\fmap(y_1)=y_1+x_0$. The classes  in $C\otimes C^*$ associated with $D$ and $D'$ are then determined by  
	\begin{align*}
		\tmap=x\otimes x^*+x_0\otimes x_0^*+x_1\otimes x_1^*+y_0\otimes y_0^*+y_1\otimes y_1^*\quad\text{and}\quad\tmap'=\tmap+x_0\otimes y_1^*. 	
	\end{align*}	
	Note that the class represented by $\tmap'-\tmap$ in $\HFKs(S^3,-K\#K)$ is non-trivial for $\star=-,\circ,\wedge$. Therefore, 
	\begin{align*}\tmap^\star_{D,-K\#K}\neq  \tmap^\star_{D',-K\#K},\quad\text{for}\ \ \star=-,\circ,\wedge.
	\end{align*}	
In particular, the slice disks $D$ and $D'$ are different up isotopy. This result should be compared with \cite[Theorem~5.4]{JZ-slice-disks}.
\end{ex}	

\subsection{Elementary cobordisms}\label{subsec:elementary-cobordisms}
A union  $\Surface\subset Y\times [0,1]$ of smooth annuli  from a link $(Y,L)$ to another link $(Y,L')$   with 
\begin{align*}
	\partial \Surface\cap Y\times\{0\}=-L\quad\text{and}\quad  \partial \Surface\cap Y\times\{1\}=L'
\end{align*}
gives a {{concordance}} $(Y\times[0,1],\Surface):(Y,L)\ra (Y,L')$. A concordance as above is called a {\emph{ribbon concordance}} if the projection
$h:Y\times [0,1]\ra [0,1]$ restricts to a Morse function on $\Surface$ which does not have any critical points of index $2$. 
Let us write $(Y,L)\leq (Y,L')$ if a ribbon concordance from $(Y,L)$ to $(Y,L')$ exists. The reverse of a ribbon concordance is called a {\emph{co-ribbon}} concordance. Ribbon concordances were studied by Gordon \cite{Gordon-Ribbon}. His conjecture, that $\leq$ is a partial order among the knots in $S^3$, is proved by Agol \cite{Agol-Ribbon}.
Zemke showed that knot Floer homology provides an obstruction to ribbon concordance \cite{Zemke-Ribbon}.  The following theorem follows as a byproduct of Zemke's result and our construction.

\begin{thm}\label{thm:ribbon-concordance-intro}
	With $X=Y\times [0,1]$, let $(X,\Surface):(Y,L)\ra (Y,L')$ be a ribbon concordance and  $(X,\overline\Surface):(Y,L')\ra (Y,L)$ be the reverse concordance (which is co-ribbon). Then for $\star=-,\circ,\wedge$
	\begin{align*}
		\gmap_{X,\overline\Surface}^\star\circ	\gmap_{X,\Surface}^\star=Id:
		\HFKs(Y,L)\ra  \HFKs(Y,L).
	\end{align*}		
	In particular, $\gmap_{Y\times [0,1],\Surface}^\star$ is injective (respectively, surjective) for every ribbon (respectively, co-ribbon) concordance $(Y\times[0,1],\Surface)$.
\end{thm}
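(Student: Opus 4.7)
The plan is to reduce the identity to Zemke's injectivity theorem for ribbon concordances from \cite{Zemke-Ribbon}, applied to the decorated link Floer homology, and then to invoke the decoration-independence of the concordance TQFT established in Theorem~\ref{thm:invariance-concordance}. First I would fix a simple decoration $\SCob$ of $\Surface$ consisting of a single arc on each annular component connecting the negative and positive boundary circles, and let $\overline{\SCob}$ denote the induced decoration of $\overline{\Surface}$ obtained by reversing the cobordism direction. Each of $L$ and $L'$ then acquires the structure of a pointed link $\Link$ and $\Link'$ with exactly one pair of basepoints per component.

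Next I would invoke Zemke's ribbon concordance theorem, which asserts that on the decorated link Floer homology
\begin{align*}
\fmap_{X,\overline{\SCob},\spinct}^\star \circ \fmap_{X,\SCob,\spinct}^\star = \mathrm{Id} : \HFLs(Y,\Link,\spinct|_Y) \lra \HFLs(Y,\Link,\spinct|_Y),
\end{align*}
for every $\spinct \in \SpinC(X)$ and $\star \in \{-,\circ,\wedge\}$. By Lemma~\ref{lem:homology-action-interactions} both decorated cobordism maps commute with the basepoint action of $\wedgews$, so they preserve the submodule
\begin{align*}
\HFKs(Y,L,\spinct|_Y) = \Ker\big(\wedgews : \HFLs(Y,\Link,\spinct|_Y) \lra \HFLs(Y,\Link,\spinct|_Y)\big)
\end{align*}
of Definition~\ref{defn:weak-HF}. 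Restricting Zemke's identity to this submodule yields the analogous identity for $\gmap_{X,\overline{\SCob},\spinct}^\star \circ \gmap_{X,\SCob,\spinct}^\star$ on $\HFKs(Y,L,\spinct|_Y)$.

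Finally, Theorem~\ref{thm:invariance-concordance} guarantees that $\gmap_{X,\Surface,\spinct}^\star = \gmap_{X,\SCob,\spinct}^\star$ and $\gmap_{X,\overline{\Surface},\spinct}^\star = \gmap_{X,\overline{\SCob},\spinct}^\star$, since the $\Abb^\star$-homomorphism associated with a concordance is independent of the chosen decoration. Summing over $\spinct \in \SpinC(X)$ gives the composition identity claimed in the theorem. The injectivity of $\gmap_{Y\times[0,1],\Surface}^\star$ for any ribbon concordance and the surjectivity of $\gmap_{Y\times[0,1],\overline{\Surface}}^\star$ for any co-ribbon concordance then follow immediately from the existence of a left (respectively right) inverse.

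The main obstacle, technically speaking, will be matching up exactly the version of Zemke's theorem needed here — his argument is naturally written for simple decorations with one arc per component, and while it adapts uniformly to $\star \in \{-,\circ,\wedge\}$, one must verify that it applies $\spinct$-by-$\spinct$ rather than only after summation, since the decomposition by $\SpinC$ structures is the one used throughout our construction. Once this compatibility is confirmed, the rest of the argument is purely a matter of restricting to the common kernel of the basepoint action maps.
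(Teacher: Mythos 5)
Your proof is correct and follows the same route the paper intends: the paper leaves this theorem without an explicit proof, stating only that it is a byproduct of Zemke's ribbon-concordance theorem and the construction developed here. You correctly identify the three ingredients — Zemke's identity $\fmap_{X,\overline{\SCob},\spinct}^\star \circ \fmap_{X,\SCob,\spinct}^\star = \mathrm{Id}$ on $\HFLs$ for a fixed simple decoration, the fact that the decorated cobordism maps commute with the basepoint action of $\wedgews$ (so that they restrict to $\HFKs$), and Theorem~\ref{thm:invariance-concordance} for decoration-independence on concordances — so your write-up is essentially a correct unpacking of the argument the paper only cites.
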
	

More generally, let $K$ and $K'$ be a pair of knots in a $3$-manifold $Y$, $X=Y\times [0,1]$, and $(X,\Surface):(Y,K)\ra (Y,K')$
denote a link cobordism.  The projection map from $X$ to $[0,1]$ may be perturbed to a function $h:X\ra [0,1]$ without critical points on $X$, such that $h|_{\Surface}:\Surface\ra [0,1]$ is Morse.  One may  modify $h:X\ra \R$ without creating critical points in $X$ so that $h|_{\Surface}$ has $m=2a+2b+k$ critical values 
\[0<s_1<s_2<\cdots<s_m<1,\] 
and associated with $s_j$  there is a unique critical point $\wpt_j\in \Surface$  for $h|_{\Surface}$, of index $0$ for $j=1,\ldots,a$, index $1$ for $j=a+1,\ldots,m-b$ and index $2$ for $j=m-b+1,\ldots,m$. Fix $\{r_j\}_{j=0}^m$ so that
\[0=r_0<s_1<r_1<s_2<r_2<\cdots<r_{m_1}<s_m<r_m=1.\] 
We may further change the order of the critical points 
$\wpt_{a+1},\ldots,\wpt_{m-b}$ arbitrarily, and after such a change of order, one may assume that that the pre-image of $[0,r_{2a}]$ gives a ribbon concordance
\[(X=Y\times[0,1],\Surface_{ribb}):(Y,K)\ra (Y,L),\] 
while the pre-image of $[r_{m-2b},1]$ gives the  co-ribbon concordance
\[(X=Y\times[0,1],\Surface_{co-ribb}):(Y,L')\ra (Y,K'),\]
for knots $L=L_0$ and $L'=L_k$ in $Y$. 
Correspondingly, $(X,\Surface)$ breaks down as the composition of a ribbon concordance, 
the band surgeries determined by the critical values $s_{a+1},\ldots,s_{m-a}$, and a co-ribbon concordance. More precisely, 
\[(X,\Surface)=(X,\Surface_{co-ribb})\circ(X,\Surface_k)\circ \cdots \circ  (X,\Surface_1)\circ (X,\Surface_{ribb}),\]
where $(X,\Surface_j):(Y,L_{j-1})\ra (Y,L_{j})$ is determined as the pri-image of $[r_{2a+j-1},r_{2a+j}]$ for every $j=1,\ldots,k$.  
\begin{defn}\label{defn:elementary-cobordism}
	A link cobordism $(X,\Surface):(Y_2,L_2)\ra (Y_1,L_1)$ is called an {\emph{elementary link cobordism}}  if there is a 	function $h:X\ra [0,1]$ without critical points on $X$, such that $h|_{\Surface}:\Surface\ra [0,1]$ is a Morse function with a single critical point $z$ of index $1$. 
\end{defn}	

In the setup of Definition~\ref{defn:elementary-cobordism}, it  follows that $Y_1$ and $Y_2$ are diffeomorphic to the same closed $3$-manifold $Y$ and that $X$ is diffeomorphic to $Y\times [0,1]$. The discussion before Definition~\ref{defn:elementary-cobordism}  implies that each $(X,\Surface_j):(Y,L_{j-1})\ra (Y,L_{j})$ is an elementary cobordism. If 
$\Surface$ is a surface of genus $g$, we find that $k=2g$ and  
\begin{align*}
	\gmap_{X,\Surface}^\star=\gmap_{X,\Surface_{co-ribb}}^\star\circ
	\gmap_{X,\Surface_k}^\star\circ\cdots\circ \gmap_{X,\Surface_1}^\star
	\circ \gmap_{X,\Surface_{ribb}}^\star,\quad\forall\ \ \star\in\{-,\circ\}.
\end{align*}	  

Theorem~\ref{thm:ribbon-concordance} gives some useful information about   $\gmap_{X,\Surface_{ribb}}^\spinc$ and $\gmap_{X,\Surface_{co-ribb}}^\spinc$. On the other hand, \cite[Lemma 2.1]{AE-unknotting} may be restated as the following proposition.

\begin{prop}\label{prop:reversing-elementary-cobordism}
	Let $(X,\Surface):(Y,L)\ra (Y,L')$ be an elementary cobordism. Denote the reverse cobordism (which is also elementary) by $(X,\oS):(Y,L')\ra (Y,L)$. Then the maps
	\begin{align*}
		&\gmap_{X,\oS}^\star\circ 	\gmap_{X,\Surface}^\star:\HFKsw(Y,L)\ra \HFKsw(Y,L)\quad\text{and}\quad
		\gmap_{X,\Surface}^\star\circ 	\gmap_{X,\oS}^\star:\HFKsw(Y,L')\ra \HFKsw(Y,L')
	\end{align*}	
	are both identified as $\zvar\cdot Id$ for $\star=-,\circ$. In particular, both maps are zero for $\star=\circ$.
\end{prop}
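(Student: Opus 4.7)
The plan is to reduce each composition to a single cobordism map via the functoriality of Theorem~\ref{thm:invariance}, and then to simplify the resulting map using the compression formula of Theorem~\ref{them:compression-along-disk-intro}. By functoriality, if we denote the composite cobordisms by $(X,\Surface\cup\oS):(Y,L)\to(Y,L)$ and $(X,\oS\cup\Surface):(Y,L')\to(Y,L')$ (where $X\cong Y\times[0,1]$ is the stacked $4$-manifold obtained by concatenating the two copies), then summing over $\SpinC(X)$ we have
\begin{align*}
\gmap^\star_{X,\oS}\circ\gmap^\star_{X,\Surface}=\gmap^\star_{X,\Surface\cup\oS}\quad\text{and}\quad \gmap^\star_{X,\Surface}\circ\gmap^\star_{X,\oS}=\gmap^\star_{X,\oS\cup\Surface}.
\end{align*}
Both statements are symmetric, so I focus on the first.

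Next, I would analyze the topology of $\Surface\cup\oS$. Each of $\Surface$ and $\oS$ carries exactly one index-$1$ critical point of the natural height function $h:X\to[0,1]$, and by construction of the reverse cobordism the descending manifold of the upper critical point on $\oS$ matches the ascending manifold of the lower critical point on $\Surface$, so the pair cancels in Morse theory. Four-dimensional handle-cancellation then produces an embedded disk $D\subset X$ lying between the two critical levels, whose boundary circle $\partial D\subset \Surface\cup\oS$ is the belt sphere of either $1$-handle transported to the other, and which is homotopically non-trivial on $\Surface\cup\oS$. A small neighborhood of the core of the second $1$-handle provides a bicollar $D\times[-1,1]^2\subset X$ disjoint from $\Surface\cup\oS$ except along $\partial D\times[-1,1]\times\{0\}$, so $D$ is a compression disk for $(X,\Surface\cup\oS)$ in the sense of Definition~\ref{defn:compression-disk}.

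Compressing along $D$ realizes the Morse cancellation: it replaces the two $1$-handles by a trivial product, so that $(X,(\Surface\cup\oS)_D)$ is isotopic (rel boundary) to the trivial product cobordism $(Y\times[0,1],L\times[0,1])$. The trivial product cobordism induces the identity on $\HFKsw(Y,L)$, as may be checked from a trisection diagram whose $\alphas_2$ is a small Hamiltonian translate of $\alphas_1$, where the only Maslov-index-zero holomorphic triangles in the relevant region are the small ones at the nearest-neighbor intersection points. Applying Theorem~\ref{them:compression-along-disk-intro} to the compression $(X,\Surface\cup\oS)\leadsto_D(X,(\Surface\cup\oS)_D)$ then gives, for $\star\in\{-,\circ\}$,
\begin{align*}
\gmap^\star_{X,\Surface\cup\oS}=\zvar\cdot\gmap^\star_{X,(\Surface\cup\oS)_D}=\zvar\cdot Id_{\HFKsw(Y,L)},
\end{align*}
and the second equality of the proposition follows from the symmetric argument applied to $(X,\oS\cup\Surface)$.

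The main obstacle is constructing the compression disk $D$ explicitly and verifying that the compressed cobordism is indeed the trivial product, i.e., that the bicollar condition of Definition~\ref{defn:compression-disk} is satisfied and that the Morse cancellation of the two adjacent critical points is realized geometrically by this compression. This is a clean $4$-dimensional handle-cancellation argument and is essentially the content of \cite[Lemma~2.1]{AE-unknotting}, from which the proposition follows once the cobordism map $\gmap^\star$ constructed in this paper has been identified as the relevant ingredient.
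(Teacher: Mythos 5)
Your argument is circular. The compression formula you invoke --- Theorem~\ref{them:compression-along-disk-intro}, restated in the body as Theorem~\ref{them:compression-along-disk} --- is proved in the paper \emph{from} Proposition~\ref{prop:reversing-elementary-cobordism}: in the chain of equalities at the end of the proof of Theorem~\ref{them:compression-along-disk}, the step $\gmap^\star_{X_1,\Surface_1,\cdot}\circ \gmap^\star_{Y''\times [0,1],\Surface',\cdot}\circ \gmap^\star_{Y''\times [0,1],\overline{\Surface'},\cdot}\circ 	\gmap^\star_{X_2,\Surface_2,\cdot}=\zvar\gmap^\star_{X_1,\Surface_1,\cdot}\circ 	\gmap^\star_{X_2,\Surface_2,\cdot}$ is justified precisely by Proposition~\ref{prop:reversing-elementary-cobordism} applied to the elementary cobordism $(Y''\times [0,1],\Surface')$. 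So deducing the proposition from the compression theorem reverses the paper's logical dependency. Indeed, what the paper actually does for the proposition is the opposite of what you propose: it does not prove it but rather \emph{restates} \cite[Lemma~2.1]{AE-unknotting} in the present notation, and then derives the compression theorem as a consequence. You mention this lemma at the end, but the body of your argument leans on Theorem~\ref{them:compression-along-disk-intro} rather than on that imported result.

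A secondary inaccuracy: the two index-one critical points on $\Surface\cup\oS$ are not in cancelling position. The second band $B'=\overline{B}$ is attached along the co-core of the first band $B$, so the stable manifold of the upper critical point and the unstable manifold of the lower one meet the intermediate level in the same arc of $L'$, not transversely in a single point; Morse cancellation does not apply, and $\Surface\cup\oS$ genuinely has larger genus than the product (consistent with $\chi(\Surface\cup\oS)=-2$). What is true --- and what the paper exploits in the other direction --- is that $\Surface\cup\oS$ carries a compression disk $D$ along the belt circle of the tube formed by $B$ and $\overline{B}$, and compressing along $D$ yields the product. A non-circular route would be to prove the proposition directly at the chain level, e.g.\ via a local trisection-diagram computation near the band (the content of the cited lemma from \cite{AE-unknotting}), and only then derive the compression formula.
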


Proposition~\ref{prop:reversing-elementary-cobordism} has an interesting consequence. We 
refer the reader to Definition~\ref{defn:compression-disk} for the definition of a compression disk for a link cobordism.

\begin{thm}\label{them:compression-along-disk}
	Suppose that the link cobordism $(X,\Surface_D)$ is obtained from the link cobordism $(X,\Surface)$ by compressing along a compression disk $D$. Then $	\gmap_{X,\Surface}^\star=\zvar\gmap_{X,\Surface_D}^\star$ for $\star=-,\circ$.	
\end{thm}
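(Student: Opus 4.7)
The plan is to reduce the statement to Proposition~\ref{prop:reversing-elementary-cobordism}. The key idea is that, in a neighborhood of the compression disk $D$, the cobordism $(X,\Surface)$ differs from $(X,\Surface_D)$ by the insertion of an elementary link cobordism composed with its reverse, and this insertion produces the factor of $\zvar$.

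First, I would choose a smooth function $h: X \to [0,1]$ without critical points on $X$ realizing $X$ as a cobordism from $Y_2$ to $Y_1$, arranged so that the compressing neighborhood $W \cong D \times [-1,1]^2$ is contained in a thin product region $Y' \times [t_1, t_2] \subset X$ for some $0 < t_1 < t_2 < 1$, with $h(\partial W) \subset \{t_1, t_2\}$. Outside this region, $\Surface$ and $\Surface_D$ coincide with the common complement $\Surface_0 = \Surface \setminus A = \Surface_D \setminus (D_+ \cup D_-)$, while inside, $\Surface$ carries the annulus $A = \partial D \times [-1,1] \times \{0\}$ and $\Surface_D$ carries the two disks $D_\pm = D \times \{\pm 1\} \times \{0\}$.

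Second, using the local model near $D$, I would factor the sub-cobordism $(Y' \times [t_1, t_2], \Surface \cap (Y' \times [t_1, t_2]))$ as
\[
(X_u, \Surface_u) \circ (X_e, \Surface_e) \circ (X_{\overline{e}}, \Surface_{\overline{e}}) \circ (X_\ell, \Surface_\ell),
\]
where $(X_e, \Surface_e)$ is an elementary link cobordism in the sense of Definition~\ref{defn:elementary-cobordism} (attaching a band whose core is a chord in $A$ between the two boundary circles $\partial D \times \{\pm 1\}$) and $(X_{\overline{e}}, \Surface_{\overline{e}})$ is its reverse. The ribbon-like structure provided by the compression disk $D$ (its bounding $\partial D$ via a disk disjoint from $\Surface$) is what permits this factorization: $D$ allows one to ambient-isotope $A$ through the band + reverse-band configuration without altering $\Surface_0$. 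Correspondingly, $(Y' \times [t_1, t_2], \Surface_D \cap (Y' \times [t_1, t_2]))$ admits a parallel factorization with the same outer pieces $(X_u, \Surface_u)$ and $(X_\ell, \Surface_\ell)$, and with the middle replaced by a trivial product cobordism of the intermediate link (since the two disks $D_\pm$ realize the ``cap off then reopen'' configuration that matches the endpoints of the band--reverse-band composition on weak Heegaard-Floer homology).

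Third, invoking the composition law from Theorem~\ref{thm:invariance} together with Proposition~\ref{prop:reversing-elementary-cobordism}, which asserts that the composition of an elementary cobordism with its reverse induces the map $\zvar \cdot \mathrm{Id}$ on weak Heegaard-Floer homology for $\star \in \{-,\circ\}$, I obtain
\[
\gmap_{X,\Surface}^\star \;=\; \gmap_{X_u,\Surface_u}^\star \circ (\zvar \cdot \mathrm{Id}) \circ \gmap_{X_\ell,\Surface_\ell}^\star \;=\; \zvar \cdot \gmap_{X,\Surface_D}^\star.
\]
The main obstacle is justifying the geometric decomposition of the middle sub-cobordism of $\Surface$ as an elementary cobordism composed with its reverse, since the annulus $A$ is topologically a trivial cylinder; the argument will rely on a careful perturbation of the Morse function on the local model to produce the required pair of saddles whose composite is homotopic (rel boundary) to the annulus identified by the compression data, together with an analogous perturbation on $\Surface_D$ that keeps the matching middle piece a product.
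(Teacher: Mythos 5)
Your proposal follows essentially the same route as the paper's proof: a Morse function isolates the compression neighborhood, the middle piece of $\Surface$ factors as an elementary cobordism composed with its reverse while the corresponding middle piece of $\Surface_D$ is a product, and Proposition~\ref{prop:reversing-elementary-cobordism} together with the composition law of Theorem~\ref{thm:invariance} produces the factor of $\zvar$. The obstacle you flag at the end — actually producing the factorizing Morse function — is resolved in the paper by a one-line explicit formula: on $W = D^2 \times [-1,1]^2$ set $h(x,t) = x_2 + \epsilon\langle x, t\rangle$, which has nonvanishing gradient on $W$ itself, restricts on $\Surface \cap W = \partial D \times [-1,1] \times \{0\}$ to a Morse function with exactly two index-one critical points (at $(x,t) = ((0,\pm 1),0)$, with values $\pm 1$), and has no critical points at all on $\Surface_D \cap W = D \times \{\pm 1\} \times \{0\}$; cutting at $h=0$ exhibits the lower half as the elementary cobordism $\Surface'$ and the upper half as its reverse. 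One small correction: the hypothesis $h(\partial W) \subset \{t_1,t_2\}$ in your first step cannot be satisfied, since $\partial W$ is a connected $3$-sphere and $h(\partial W)$ is therefore an interval; what you actually want is $h(W) \subset [t_1,t_2]$, so that the entire local model is sandwiched between the two cut levels.
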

\begin{proof}
	Identify $D$ with the standard disk $D^2$ and let $h$ denote the height function on $D^2$. Then $h$ extends to a Morse function on $W=D\times [-1,1]^2=D^2\times [-1,1]^2$ by setting 
	\begin{align*}
		h(x,t):=h(x)+\epsilon\langle x,t\rangle =x_2+\epsilon t_1x_1+\epsilon t_2x_2,\quad\forall\ (x,t)=(x_1,x_2,t_1,t_2)\in D^2\times [-1,1]^2,
	\end{align*}
	for some $\epsilon\in\R^{>0}$. Note that $\nabla h=((0,1)+\epsilon t,\epsilon x)$ is always non-zero. On $\partial D\times [-1,1]\times\{0\}$, the gradient of the restriction of $h$ is zero if $x=(0,\pm 1)$ and $t=0$. In other words, the restriction of $h$ to $W\cap\Surface$ has precisely two critical points of index $1$.  The function $h:W\ra \R$, which does not have any critical points, may be extended to a Morse function $h:X\ra \R$ so that all critical values of $h$ are in $\R-[-1,1]$ and all critical values of the restriction $h:\Surface-W\ra \R$ (which may also be assumed to be Morse) are also outside $\R-[-1,1]$.
	By considering the pre-images of $(-\infty,-1-\epsilon]$, $[-1-\epsilon,1+\epsilon]$ and 
	$[1+\epsilon,\infty)$ under $h$, we then obtain a decomposition of $(X,\Surface)$ and $(X,\Surface_D)$ as  
	\begin{align*}
		&(X,\Surface)=(X_2,\Surface_2)\circ (Y''\times [0,1],\Surface'')\circ (X_1,\Surface_1),\quad\text{and}\quad
		(X,\Surface)=(X_2,\Surface_2)\circ  (X_1,\Surface_1).
	\end{align*}	
	The surface $\Surface''$ is a (possibly disconnected) surface of genus $1$.
	The crutical observation is that 
	\begin{align*}
		(Y''\times [0,1],\Surface'')=(Y''\times [0,1],\overline{\Surface'})\circ (Y''\times [0,1],{\Surface'})
	\end{align*}
	where the elementary cobordism $(X''=Y''\times [0,1],{\Surface'})$ is  the pre-image of $[-1-\epsilon,0]$ under $h$. In other words,  compression along the dik $D$ may be realized as the composition of an elementary cobordism and its reverse, see Figure~\ref{fig:reversing-elementary-cobordisms} for an illustration. Therefore, for $\star=-,\circ$
	\begin{figure}
		\def\svgwidth{0.85\textwidth}
		{\footnotesize{
				\begin{center}
					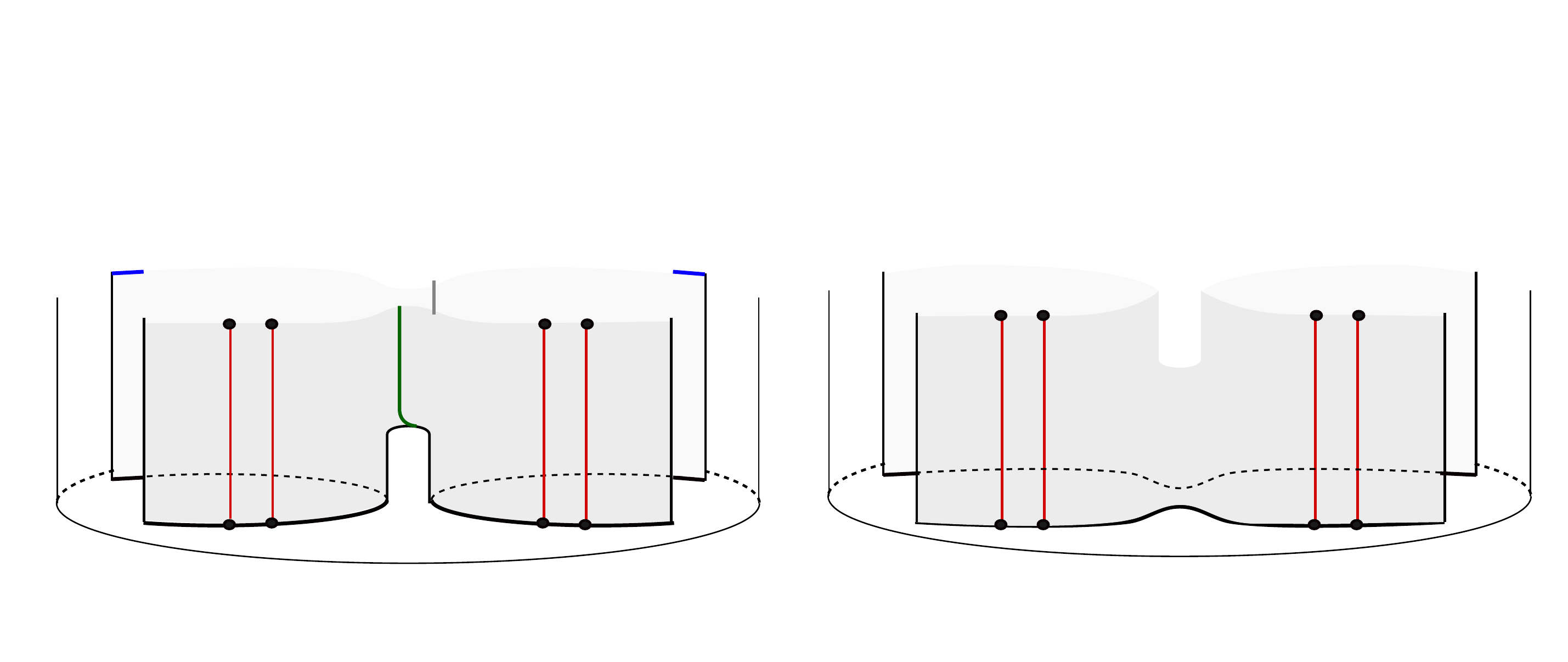
		\end{center}}}
		\caption{Compression along a disk $D$ may be described as the composition of an elementary cobordism with the reverse of that cobordism, as illustrated in parts (a) and (b).}
		\label{fig:reversing-elementary-cobordisms}
	\end{figure} 
	\begin{align*}
		\gmap_{X,\Surface,\spinct}^\star&=
		\gmap_{X_1,\Surface_1,\spinct|_{X_1}}^\star\circ \gmap_{Y''\times [0,1],\Surface'',\spinct|_{Y''}}^\star\circ 	\gmap_{X_2,\Surface_2,\spinct|_{X_1}}^\star\\
		&=
		\gmap_{X_1,\Surface_1,\spinct|_{X_1}}^\star\circ \gmap_{Y''\times [0,1],\Surface',\spinct|_{Y''}}^\star\circ \gmap_{Y''\times [0,1],\overline{\Surface'},\spinct|_{Y''}}^\star\circ 	\gmap_{X_2,\Surface_2,\spinct|_{X_1}}^\star\\
		&=\zvar\gmap_{X_1,\Surface_1,\spinct|_{X_1}}^\star\circ 	\gmap_{X_2,\Surface_2,\spinct|_{X_1}}^\star=\zvar\gmap_{X,\Surface_D,\spinct}^\star,\quad\quad\quad\quad\quad\quad\forall\ \ \spinct\in\SpinC(X).
	\end{align*}	
	The third equality above follows from Proposition~\ref{prop:reversing-elementary-cobordism} for the elementary cobordism $(Y''\times [0,1],{\Surface'})$. The above computation completes the proof.
\end{proof}

\subsection{General merge and split cobordisms}\label{subsec:general-merge-split}
In Section~\ref{subsec:merge-split}, we considered the merge cobordism, where a trivial (unknotted) link component merges with a component of a link $(Y,L)$. The result is an elementary cobordism in $Y\times[0,1]$ which consists of several trivial cylinders and a distinguished component, which is a sphere with three disks removed. The split cobordism is the reverse of the aforementioned elementary cobordism. \\

More generally, let $(Y^j,L^j)$ be a link  and $p^j\in L^j$ be a point on a distinguished component of $L^j$ for $j=1,2$. Remove a ball neighborhood $B^j$  of $p^j$ and glue the $S^2$-boundaries of the resulting $3$-manifolds, to obtain the manifold $Y=Y^1\#Y^2$. If the gluing is done appropriately, so that the two boundary components in $L^1-B^1$ are identified with the two boundary components of $L^2-B^2$, we obtain a link $L=L^1\#_{p^1,p^2} L^2$ in $Y$. The orientations of $L^1$ and $L^2$ induce an orientation on $L$ (provided that the gluing is done appropriately). If $p^j$ is replaced by two nearby points in $Y^j-L^j$, the construction gives the split union $L'=L^1\amalg L^2$ in $Y$. Note that  $\HFKs(Y,L')\simeq \HFKs(Y,L)$. Two simple cobordisms connecting the links $(Y,L')$ and $(Y,L)$ are hosted by  $X=Y\times [0,1]$, which are generalizations of the merge and split cobordisms, and are denoted by $(X,\Surface_m):(Y,L)\ra (Y,L')$ and $(X,\Surface_s):(Y,L)\ra (Y,L')$, respectively.  \\

\begin{figure}
	\def\svgwidth{0.9\textwidth}
	{\footnotesize{
			\begin{center}
				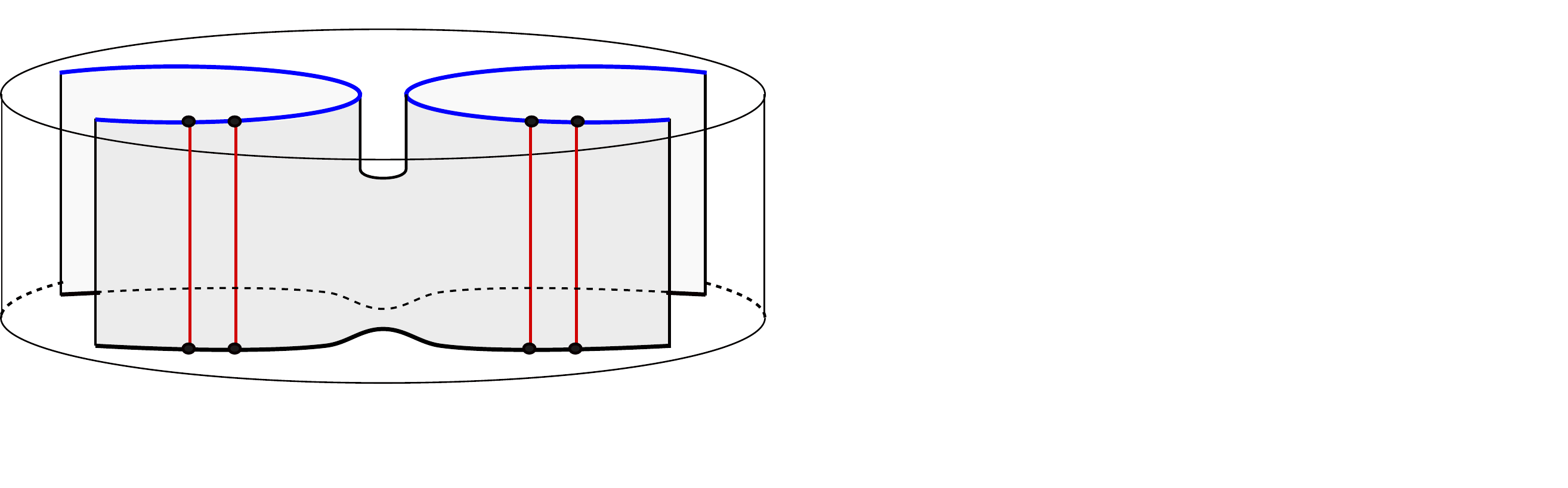
	\end{center}}}
	\caption{(a)  The general merge cobordism connects the link $(Y^1\#Y^2,\Link^1\amalg\Link^2)$ to  $(Y^1\#Y^2,\Link^1\#\Link^2)$. (b) The reverse of a general merge cobordism is a general split cobordism.}
	\label{fig:Merge-Split-Cobordisms}
\end{figure}

Let $\Link^j=(L^j,\wpoint^j,\zpoint^j)$ be a pointed version of the link $L^j$, and assume that $p^j$ is not in $\wpoint^j\amalg\zpoint^j$. Denote the closest points of $\wpoint^j$ and $\zpoint^j$ to $p^j$  by $z^j$ and $w^j$, respectively. The subsets $\{\wpoint^1\amalg\wpoint^2\}\times[0,1]$ of $\Surface_m$ and $\Surface_s$ are decorations of the latter  link cobordisms, giving the decorated cobordisms $(X,\SCob_m)$ and $(X,\SCob_s)$, respectively, as illustrated in Figure~\ref{fig:Merge-Split-Cobordisms}(a,b). 

\begin{thm}\label{thm:merge-split-general}
	Let $(X,\Surface_m):(Y,L)\ra 	(Y,L')$ be a general merge cobordism, with $Y=Y^1\#Y^2$, $L=L^1\#_{p^1,p^2}L^2$ and $L'=L^1\amalg L^2$ as above, and  $(X,\Surface_s):(Y,L')\ra 	(Y,L)$ be the corresponding general split cobordism. Then for $\star=-,\circ$ we have
	\begin{equation}\label{eq:Merge-Split-Formula}
		\begin{split}
			&\gmap_{X,\Surface_m}^{\star}= Id:
			\HFKsw(Y,L')\ra \HFKsw(Y,L)\quad  \text{and}\quad
			\gmap_{X,\Surface_s}^{\star}=\zvar\cdot Id:
			\HFKsw(Y,L)
			\ra \HFKsw(Y,L').
		\end{split}
	\end{equation}	
\end{thm}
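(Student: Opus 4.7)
The strategy is to realize $(X, \SCob_m)$ and $(X, \SCob_s)$ via trisection diagrams that are local modifications of diagrams for the product cobordism on $(Y, L^1 \# L^2)$, exactly paralleling the construction in Section~\ref{subsec:merge-split}, and then deduce the general formulas from the simple case of Theorem~\ref{thm:merge-split} via a neck-stretching argument.

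First I would fix pointed Heegaard diagrams $H^j = (\HSurf^j, \alphas^j_1, \alphas^j_2, \wpoint^j, \zpoint^j)$ for $(Y^j, \Link^j)$, $j = 1, 2$, arranged so that the connect-sum points $p^j$ lie in small disks $D^j \subset \HSurf^j$ disjoint from all attaching curves and basepoints. Forming $\HSurf = \HSurf^1 \#_{p^1, p^2} \HSurf^2$ with $\alphas_i = \alphas^1_i \cup \alphas^2_i$ yields a Heegaard diagram for $(Y, \Link^1 \# \Link^2)$, and completing it to a trisection diagram (with $\alphas_0$ a small Hamiltonian isotope of $\alphas_1$) gives a trisection diagram for the product cobordism. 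Then, imitating the construction of $\oH_m$ and $\oH_s$ in Section~\ref{subsec:merge-split}, I would attach the standard triangulated sphere $H'$ along a basepoint $z$ adjacent to the merging region, producing trisection diagrams $\oH_m$ and $\oH_s$ compatible with $(X, \SCob_m)$ and $(X, \SCob_s)$; admissibility is arranged by further Hamiltonian isotopies of $\alphas_0$.

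The key step is the triangle count: after stretching the neck of the tube connecting $\HSurf$ to the local sphere (as in \cite[Prop.~5.2]{MO-integer-surgery}, used in the proof of Theorem~\ref{thm:merge-split}), every holomorphic triangle contributing to $F_{\oH_m, \spinct}$ (resp.~$F_{\oH_s, \spinct}$) factors as a triangle in the sphere region times a Whitney disk in the outer surface; since the outer cobordism is the trivial product on $(Y, \Link^1 \# \Link^2) \times I$, the outer disk is constant and the count reduces to the local sphere count. Under the tensor decomposition
\begin{align*}
\HFLs(Y, \Link^1 \amalg \Link^2, \spinc) \cong \HFLs(Y, \Link^1 \# \Link^2, \spinc) \otimes_{\Abb^\star} \big(\Abb^\star \oplus \Abb^\star \llb 1, 1 \rrb\big),
\end{align*}
this yields $\fmap^\star_{X, \SCob_m, \spinct} = Id \oplus \zvar \cdot Id$ and $\fmap^\star_{X, \SCob_s, \spinct} = \zvar \cdot Id \oplus Id$, precisely as in Theorem~\ref{thm:merge-split}. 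Restricting to the first tensor summand (i.e.~the common kernel of the basepoint actions introduced by the added sphere) identifies $\HFKsw(Y, L) \cong \HFKsw(Y, L')$, and under this identification $\gmap^\star_{X, \Surface_m, \spinct} = Id$ and $\gmap^\star_{X, \Surface_s, \spinct} = \zvar \cdot Id$ for every $\spinct \in \SpinC(X)$; summing over $\SpinC$ classes yields~\eqref{eq:Merge-Split-Formula}.

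The main obstacle is justifying the neck-stretching decomposition of triangle moduli spaces in this connect-sum setting. The argument of \cite[Prop.~5.2]{MO-integer-surgery} depends only on the local structure of the trisection diagram near the attached sphere and not on the outer surface beyond admissibility, so it adapts without essential change; the only delicate combinatorial point is ensuring that the basepoints $p^j$ used for the $3$-manifold connect sum can be chosen compatibly with the basepoint $z$ used for the connected sum with $H'$, which is resolved by the freedom to isotope the pointed Heegaard diagrams $H^j$ prior to gluing.
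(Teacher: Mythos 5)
There is a genuine gap in your proposed construction, and it is conceptual rather than merely combinatorial. You propose to start from a Heegaard diagram for $(Y, \Link^1 \# \Link^2)$, build the trisection for the product cobordism, and then attach the standard sphere $H'$ near the connecting region, ``exactly paralleling the construction in Section~\ref{subsec:merge-split}.'' But the construction of $\oH_m$ in that section always models the \emph{simple} merge cobordism: a small unknotted, split-off unlink component being absorbed into a nearby link component. Applied to a diagram for $\Link^1 \# \Link^2$, this gives a trisection diagram for the cobordism from $(L^1 \# L^2) \amalg U$ to $L^1 \# L^2$, where $U$ is a small unknot. This is \emph{not} the general merge cobordism: its incoming boundary link $(L^1 \# L^2) \amalg U$ is not isotopic to the split union $L^1 \amalg L^2$ unless both $L^1$ and $L^2$ are trivial (compare Alexander polynomials when both are, say, trefoils). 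Since the cobordism map is an invariant of the embedded cobordism and the two cobordisms have different boundary, one cannot deduce the general formula from Theorem~\ref{thm:merge-split} this way, no matter how the gluing points $p^j$ and $z$ are arranged. (There is also a minor issue at the level of diagrams: connect-summing $H^1$ and $H^2$ at interior points $p^j$ away from basepoints and curves, without adding a curve to each $\alphas_i$, does not give a valid pointed Heegaard diagram for $\Link^1 \# \Link^2$ --- the merged region has two $w$'s and two $z$'s --- so even the starting diagram needs adjustment.)

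The paper's proof instead constructs a trisection diagram directly for the general merge: remove disks at the $z$-basepoints $z^1 \in \HSurf^1$, $z^2 \in \HSurf^2$, join by a tube, and place three Hamiltonian-isotopic belt curves $\alpha_0, \alpha_1, \alpha_2$ on the tube, relocating $z^1, z^2$ to two of the six small triangles these curves cut out. The sub-diagrams $H_1$ and $H_2$ then encode $\Link^1 \# \Link^2$ and $\Link^1 \amalg \Link^2$ respectively. The generators factor as $\Sbf(H_i) = \Sbf(H^1_i) \times \Sbf(H^2_i) \times \{\theta_i^\pm\}$, and --- after stretching the neck along the tube --- the holomorphic triangle count factors using the connected-sum formula for cobordism maps, namely \cite[Theorem~7.3]{AE-2} or \cite[Proposition~5.3]{Zemke-1}, \emph{not} \cite[Proposition~5.2]{MO-integer-surgery} (which governs the simple merge of Section~\ref{subsec:merge-split}). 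The resulting formula (\ref{eq:merge-map}), together with the description of $C(H_1)$ and $C(H_2)$ as tensor products with a rank-two factor, gives the identity on the $\theta^+$ summand, which is exactly where the $\wedgews$-kernel lives. Your neck-stretching philosophy and the idea that the map factors as a local count times the identity on the outer region are in the right spirit, but they must be applied to the correct diagram, and the relevant degeneration result is the knot-Floer connected-sum formula rather than the quasi-stabilization/simple-merge lemma.
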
	
\begin{proof}
	For $j=1,2$, let  $H^j=(\HSurf^j,\alphas_0^j,\alphas_1^j;\wpoint^j,\zpoint^j)$ denote a  pointed Heegaard diagram for $(Y^j,\Link^j)$. Assume that $w^j\in\wpoint^j$ and $z^j\in\zpoint^j$ are in the same component of $\HSurf^j-\alphas_1^j$, and that the connected sum takes place near  the basepoint $z^j$. Further assume that  there is a unique curve $\alpha_0^j\in\alphas_0^j$ which separates $w^j$ from $z^j$. The neighborhood of $w^j$ and $z^j$ in $H^j$ is illustrated in Figure~\ref{fig:Merge-Split-Cobordisms-B}(a). \\

	A trisection diagram for $(X,\SCob_m)$ is then constructed as follows. We remove a disk neighborhood of $z^j$ in $\HSurf^j$, and connect the circle boundaries of the resulting surfaces by a $1$-handle to form a closed surface $\HSurf=\HSurf^1\#\HSurf^2$. Let $\alpha_0,\alpha_1$ and $\alpha_2$ denote three simple closed curves which are Hamiltonian isotopes of the belt of the aforementioned $1$-handle. Therefore, each pair of these three curves intersect each-other in a pair of transverse intersection points. We then set $\alphas_i=\alphas_i^1\amalg\alphas_i^2\amalg\{\alpha_i\}$ for $i\in\Z/3$, where $\alphas_2^j$ consists of Hamiltonian isotopes of the curves in $\alphas_1^j$ (for $j=1,2$). This is illustrated in Figure~\ref{fig:Merge-Split-Cobordisms}(b). A pair of basepoints in two of the $6$ small traingles bounded between $\alpha_0,\alpha_1$ and $\alpha_2$ may be labeled $z^1$ and $z^2$ (to replace the missing basepoints with the same label from $\zpoint^1$ and $\zpoint^2$), as illustrated in Figure~\ref{fig:Merge-Split-Cobordisms}(b). The result is a trisection diagram
	\begin{align*}
		H=(\HSurf,\alphas_0,\alphas_1,\alphas_2,\wpoint=\wpoint^1\amalg\wpoint^2,\zpoint=\zpoint^1\amalg\zpoint^2),	
	\end{align*}
	which represents $(X,\SCob_m)$.	We fix the $\SpinC$ structures $\spinc^j$ on $Y^j$ and $\spinc=\spinc^1\#\spinc^2$ on $Y$, and drop them from the notation for simplicity. We may further assume that $H$ is $\spinc$-admissible. There are $6$ intersection points on the $1$-handle connecting $\HSurf^1$ and $\HSurf^2$ between $\alpha_0,\alpha_1$ and $\alpha_2$. The intersection points between $\alpha_{i+1}$ and $\alpha_{i-1}$ are denoted by $\theta_i^+$ and $\theta_i^-$, while we assume that the two bigons connecting them are the domains of  Whitney disks from $\theta^+_i$ to $\theta^-_i$ (i.e. that $\theta^+_i$ is the top intersection point, see Figure~\ref{fig:Merge-Split-Cobordisms}(d)). \\

	\begin{figure}
		\def\svgwidth{0.95\textwidth}
		{\footnotesize{
				\begin{center}
					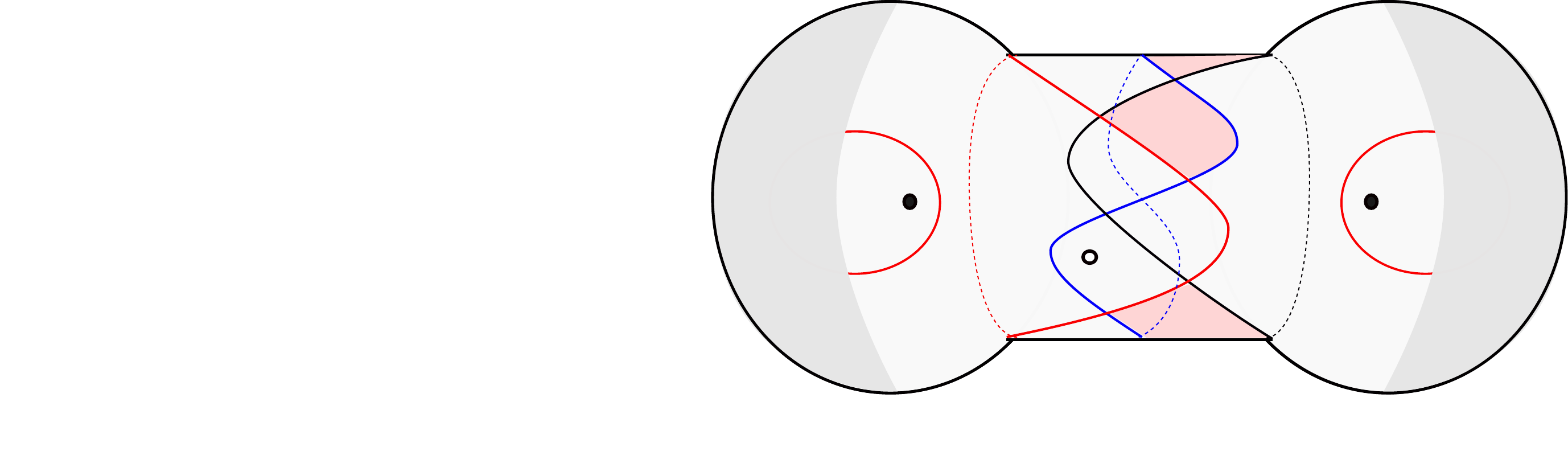
		\end{center}}}
		\caption{A pair of diagrams $H^1$ and $H^2$ for the links $(Y^1,\Link^1)$ and $(Y^2,\Link^2)$, as in (a), give a trisection diagram for the general merge cobordism, illustrated in (b).}
		\label{fig:Merge-Split-Cobordisms-B}
	\end{figure} 
	
	Abusing the notation, we use $H^j$ also to denote $(\HSurf^j,\alphas_0^j,\alphas_1^j,\alphas_2^j,\wpoint^j,\zpoint^j)$, corresponding to the  cobordism $(Y^j,\Link^j)\times [0,1]$. Therefore, the chain map $f_{H^j}$ induces the identity map of $\HFLs(Y^j,\Link^j)$. The set $\Sbf(H_i)$ of generators of the chain complex $C(H_i)$  is decomposed as 
	\[\Sbf(H_i)=\Sbf(H^1_i)\times\Sbf(H^2_i)\times\{\theta_i^+,\theta_i^-\},\quad\forall\ i\in\Z/3.\]
	 The diagram $H$ is of the type studied in \cite[Theorem 7.3]{AE-2} (or \cite[Proposition 5.3]{Zemke-1}), except for the positioning of the basepoints $z^1$ and $z^2$. We may thus describe the chain map $f_{H,\spinc}$ (associated with $H$ and $\spinc$), provided that the path of almost complex structures is sufficiently stretched along the two attaching circles of the $1$-handle containing $\alpha_0,\alpha_1$ and $\alpha_2$, by
	\begin{equation}\label{eq:merge-map}
		\begin{split}
			&f_{H,\spinc}(\x^1\times \x^2\times \theta_2^+)= f_{H^1,\spinc^1}(\x^1)\otimes 	f_{H^2,\spinc^2}(\x^2)\otimes \theta_1^+\quad\text{and}\\
			&f_{H,\spinc}(\x^1\times \x^2\times \theta_2^-)=\zvar\cdot f_{H^1,\spinc^1}(\x^1)\otimes 	f_{H^2,\spinc^2}(\x^2)\otimes\theta_1^-+F_\spinc(\x_1,\x_2)\otimes\theta_2^+,
		\end{split}
	\end{equation}	 
	for some map $F_\spinc:C(H^1_2)\otimes C(H^2_2)\ra C(H^1_1)\otimes C(H^2_1)$. 
	The domain of every contributing triangle for $f_{H,\spinc}$ includes one of the two pink triangles in Figure~\ref{fig:Merge-Split-Cobordisms}(b).
	Under the assumption on the path of almost complex structures (that it is sufficiently stretched), the disks contributing to the differential of $C(H_i)$ (for $i=1,2$) may be described using \cite[Proposition 5.1]{AE-2}. In particular, 
	\begin{align*}
		&C(H_1)=C(H^1_1)\otimes_\Ringm C(H^2_1)\otimes_\Ringm\big(\Ringm\oplus \Ringm\llb1,1\rrb\big) \quad\text{and}\\
		&C(H_2)=C(H^1_2)\otimes_\Ringm C(H^2_2)\otimes_\Ringm\big(\Ringm\oplus \Ringm\llb1,0\rrb\big),
	\end{align*}
	where the tensor product with $\Ringm\oplus\Ringm\llb1,1\rrb$ (or with $\Ringm\oplus\Ringm\llb1,0\rrb$) on the right-hand-side corresponds to the choice of $\theta_i^+$ or $\theta_i^-$. In particular, the generators of $\HFKs(H_i)\subset \HFLs(H_i)$, which are in the kernel of the action of $\wedgews$, use $\theta^+_i$ (and not $\theta^-_i$). Therefore, the first formula in (\ref{eq:merge-map}) implies that the induced cobordism map 
	\begin{align*}
		\gmap_{X,\Surface_m,\spinc}^\star=\gmap_{H,\spinc}^\star:\HFKsw(Y,L,\spinc)
		\ra \HFKsw(Y,L',\spinc)=\HFKsw(Y,L,\spinc)	
	\end{align*}		 
	is the natural identity map. This completes the proof of the first claim  in (\ref{eq:Merge-Split-Formula}). The proof of the second claim is completely similar.
\end{proof}

\begin{ex}\label{ex:RTR}
	Let $K$ and $K'$ denote a pair of knots in $S^3$ which correspond to knot diagrams which are identical outside a small disk $D$, where the two diagrams differ by a {\emph{proper rational replacement}} (i.e. a rational replacement which does not change the pair of points on the boundary of $D$ which are connected to one another). The rational replacement determines a $(1,1)$ link $R$ with two components and the link cobordism 
	\begin{align*}
		&(S^3\times [0,1],\Surface_1): (S^3,K\amalg R\amalg -R)\ra (S^3,K'\amalg -R)\quad\text{and}\\
		&(S^3\times [0,1], \Surface_2): (S^3,K'\amalg -R)\ra (S^3,K),	
	\end{align*}	
	of genus zero.  The composition of the above two link cobordisms is a link cobordism $(S^3\times [0,1],\Surface)$ which decomposes as the composition of a merge cobordism  and $(S^3\times [0,1],\Surface_m)$ with the split union of a cobordism 
		\begin{align*}
		&(S^3\times [0,1], K\times [0,1]\amalg \Surface_3): (S^3, K\amalg R\amalg -R)\ra (S^3,K\amalg U). 	
	\end{align*}	
Correspondingly, Theorem~\ref{thm:merge-split-general} and the composition law for TQFTs imply that  associated with every $\y^+\in\HFKs(R),\y^-\in\HFKs(-R)$ we obtain the maps
\begin{align*}
\gmap_{\y^+}:\HFKs(K)\ra \HFKs(K')\quad\text{and}\quad
\gmap_{\y^-}:\HFKs(K')\ra \HFKs(K), 	
\end{align*}	
so that $\gmap_{\y^-}\circ\gmap_{\y^+}=\afrak(\y^-,\y^+)\cdot Id$ for some $\afrak(\y^-,\y^+)\in\Rings$.  The main observation in \cite{Ef-RTR} is that for $\star=\circ$, we may choose $\y^-$ and $\y^+$ so that $\afrak(\y^-,\y^+)=\zvar$.
\end{ex}	

\bibliographystyle{hamsalpha}

\end{document}